\numberwithin{equation}{section}
\numberwithin{figure}{section}
\newtheoremstyle{mythm}{3pt}{3pt}{\itshape}{0pt}{\bfseries}{.}{0.5eM}{}
\theoremstyle{mythm}
\newtheorem{definition}{Definition}[section]
\newtheorem{thm}[definition]{Theorem}
\newtheorem{lem}[definition]{Lemma}
\newtheorem{cor}[definition]{Corollary}  
\newtheorem{prop}[definition]{Proposition}
\newtheoremstyle{myrem}{3pt}{3pt}{\normalfont}{0pt}{\bfseries}{.}{0.5em}{}
\theoremstyle{myrem}
\newtheorem{rem}[definition]{Remark}
\newcommand{\equi}{\ensuremath{\Leftrightarrow}}
\newcommand{\ld}{\ensuremath{,\ldots,}}
\newcommand{\ssq}{\ensuremath{\subseteq}}
\newcommand{\smin}{\ensuremath{\setminus}}
\newcommand{\eps}{\ensuremath{\epsilon}}
\newcommand{\htop}{\ensuremath{h_{\mathrm{top}}}}
\newcommand{\dist}{\ensuremath{\mathop{\rm dist}}}
\newcommand{\Leb}{\ensuremath{\mathrm{Leb}}}
\newcommand{\interior}{\ensuremath{\mathrm{int}}}
\newcommand{\inte}{\ensuremath{\mathrm{int}}}
\newcommand{\closure}{\ensuremath{\mathrm{cl}}}
\newcommand{\spann}{\ensuremath{\mathrm{span}}}
\newcommand{\kreis}{\ensuremath{\mathbb{T}^{1}}}
\newcommand{\nfolge}[1]{\ensuremath{(#1)_{n\in\mathbb{N}}}}
\newcommand{\alphlist}{\begin{list}{(\alph{enumi})}{\usecounter{enumi}\setlength{\parsep}{2pt}
      \setlength{\itemsep}{1pt} \setlength{\topsep}{5pt}
      \setlength{\partopsep}{3pt}}}
\newcommand{\arablist}{\begin{list}{(\arabic{enumi})}{\usecounter{enumi}\setlength{\parsep}{2pt}
          \setlength{\itemsep}{1pt} \setlength{\topsep}{5pt}
          \setlength{\partopsep}{3pt}}}
\newcommand{\romanlist}{\begin{list}{(\roman{enumi})}{\usecounter{enumi}\setlength{\parsep}{2pt}
              \setlength{\itemsep}{1pt} \setlength{\topsep}{5pt}
              \setlength{\partopsep}{3pt}}}
\newcommand{\Romanlist}{\begin{list}{(\Roman{enumi})}{\usecounter{enumi}\setlength{\parsep}{2pt}
              \setlength{\itemsep}{1pt} \setlength{\topsep}{5pt}
              \setlength{\partopsep}{3pt}}}
\newcommand{\bulletlist}{\begin{list}{$\bullet$}{\setlength{\parsep}{2pt}
                \setlength{\itemsep}{1pt} \setlength{\topsep}{5pt}
                \setlength{\partopsep}{3pt}\setlength{\leftmargin}{15pt}}} 
\newcommand{\Alphlist}{\begin{list}{(\Alph{enumi})}{\usecounter{enumi}\setlength{\parsep}{2pt}
      \setlength{\itemsep}{1pt} \setlength{\topsep}{5pt}
      \setlength{\partopsep}{3pt}}}
 \newcommand{\listend}{\end{list}}
\newcommand{\T}{\ensuremath{\mathbb{T}}}
\newcommand{\N}{\ensuremath{\mathbb{N}}} 
\newcommand{\R}{\ensuremath{\mathbb{R}}}
\newcommand{\Z}{\ensuremath{\mathbb{Z}}}
\newcommand{\Q}{\ensuremath{\mathbb{Q}}}
\newcommand{\cC}{\mathcal{C}}
\newcommand{\cD}{\mathcal{D}}
\newcommand{\cF}{\mathcal{F}}
\newcommand{\cG}{\mathcal{G}}
\newcommand{\cK}{\mathcal{K}}
\newcommand{\cL}{\mathcal{L}}
\newcommand{\cP}{\mathcal{P}}
\newcommand{\nLim}{\ensuremath{\lim_{n\rightarrow\infty}}}
\newcommand{\jLim}{\ensuremath{\lim_{j\rightarrow\infty}}}
\newcounter{paranum}[section]
\newcommand{\mc}{\mathcal}
\newcommand{\w}{\omega}
\newcommand{\f}{R_\w}
\renewcommand{\:}{\colon}
\renewcommand{\epsilon}{\varepsilon}
\newcommand{\rH}{\Theta_H^r}
\DeclareMathOperator{\GL}{GL}
\newcommand{\oplam}{\mbox{\Large $\curlywedge$}}
\title[Irregular model sets and tame dynamics]{Irregular model sets and tame
  dynamics} \author{G.~Fuhrmann$^1$} \address{$^1$ {\em Corresponding author.}
  Department of Mathematics, Imperial College London, 180 Queen’s Gate, London
  SW7 2AZ, UK.}  \author{E.~Glasner$^2$}\address{$^2$Department of Mathematics,
  Tel-Aviv University, Ramat Aviv, Israel.} \author{T.~J\"ager$^3$}
\author{C.~Oertel$^3$} \address{$^3$Institute of Mathematics, Friedrich Schiller
  University Jena, Germany.} \email{gabriel.fuhrmann@imperial.ac.uk,
  glasner@math.tau.ac.il} \email{tobias.jaeger@uni-jena.de,
  christian.oertel@uni-jena.de}
\begin{document}

 \maketitle

 \begin{abstract}
  We study the dynamical properties of irregular model sets and show that the
  translation action on their hull always admits an infinite independence
  set. The dynamics can therefore not be tame and the topological sequence
  entropy is strictly positive. Extending the proof to a more general setting,
  we further obtain that tame implies regular for almost automorphic group
  actions on compact spaces.

 In the converse direction, we show that even in the restrictive case of
 Euclidean cut and project schemes irregular model sets may be uniquely ergodic 
 and have zero topological entropy. This provides negative answers to questions by
 Schlottmann and Moody in the Euclidean setting.

  \noindent{\em 2010 Mathematics Subject Classification.} 52C23 (primary),
  37B50, 37B10 (secondary).

  \noindent{\em Keywords:} model sets, cut and project schemes, topological
  group actions, tame dynamics.
 \end{abstract}

 \section{Introduction}

In the mathematical theory of quasicrystals and aperiodic order, one of the
major constructions of aperiodic structures is the {\em cut and project method},
introduced by Meyer in the context of algebraic number theory
\cite{Meyer1972AlgebraicNumbers}. The aim of this work is to contribute to a
better understanding of the relations between the different ingredients in this
construction and the dynamical properties of the resulting Delone dynamical
systems. More precisely, we study irregular model sets which are obtained when
the compact window in the cut and project construction has a positive measure
boundary. In contrast to regular model sets, whose dynamics and diffraction
theory are rather well-understood
\cite{Schlottmann1999GeneralizedModelSets,LeeMoodySolomyak2002,BaakeLenz2004PurePointDiffractionSpectrum,BaakeLenzMoody2007Characterization},
the description of their irregular counterparts is still far from being
satisfactory. As a byproduct, it turns out that the cut and project method also
provides an alternative approach to problems in symbolic and topological
dynamics outside the classical focus of aperiodic order. Our main results can be
stated as follows. We refer to Section~\ref{Preliminaries} for definitions and
background.

\begin{thm} \label{t.tame_implies_regular_for_model_sets}
  Suppose that $\oplam(W)$ is an irregular model set, arising from a cut and
  project scheme $(G,H,\cL)$ with locally compact and second countable abelian
  groups $G$ and $H$ and co-compact lattice $\cL\ssq G\times H$. Then there
  exists an infinite independence set for the dynamical hull, and consequently
  the translation action on the hull is not tame.
\end{thm}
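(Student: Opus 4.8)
The plan is to exhibit an infinite independence set for the hull $\Omega$; by the standard combinatorial characterisation of tameness this already rules out tameness, so it is all that has to be done. Assume (after translating $\oplam(W)$, which leaves $\Omega$ unchanged) that $0\in\oplam(W)$, fix a small $\delta>0$, and consider the two disjoint closed subsets $A_1=\{\Lambda\in\Omega:0\in\Lambda\}$ and $A_0=\{\Lambda\in\Omega:\Lambda\cap B_\delta(0)=\emptyset\}$ of $\Omega$, where $G$ acts by $g\cdot\Lambda=\Lambda-g$. It suffices to find pairwise distinct lattice points $\ell_1,\ell_2,\dots\in\cL$ such that, writing $s_i:=\pi_G(\ell_i)$, for every finite $F\subseteq\N$ and every $\sigma\colon F\to\{0,1\}$ there is some $\Lambda\in\Omega$ with $s_i\in\Lambda$ whenever $\sigma(i)=1$ and $\Lambda\cap B_\delta(s_i)=\emptyset$ whenever $\sigma(i)=0$; then $S=\{s_i:i\in\N\}$ is an infinite independence set for the pair $(A_0,A_1)$.

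The second step is to translate this into the language of the window. Write $x_i:=\pi_H(\ell_i)$. A translate $\oplam(W)+\pi_G(\ell)$, with $x:=\pi_H(\ell)$, contains $s_i$ exactly when $x_i-x\in W$; more generally, a limit in $\Omega$ of such translates $\oplam(W)+\pi_G(\ell^{(k)})$ with $\pi_H(\ell^{(k)})\to v$ contains $s_i$ according to whether $x_i-v$ lies in $\inte W$, in $\mathrm{ext}\,W:=H\setminus\overline W$, or---when $x_i-v\in\partial W$---according to the direction of approach. Using the density of $\pi_H(\cL)$ in $H$ and passing to such limits in the Fell topology, realising all the patterns reduces to choosing a single point $v\in H$ and the lattice points $\ell_i$ (which are ours to pick) so that $b_i:=x_i-v\in\partial W$ for all $i$, and so that for each $\sigma$ there is a sequence converging to $0$ inside the open set $\bigcap_{\sigma(i)=1}(b_i-\inte W)\cap\bigcap_{\sigma(i)=0}(b_i-\mathrm{ext}\,W)$; that is, it reduces to finding an infinite family $(b_i)$ in $\partial W$ which is \emph{$W$-independent} in the sense that every such open set has $0$ in its closure. (Strengthening ``$s_i\notin\Lambda$'' to ``$\Lambda\cap B_\delta(s_i)=\emptyset$'' at the off sites is cheap: only finitely many further lattice points project into $B_\delta(s_i)$ with second coordinate near $v+W$, and a generic choice of the $\ell_i$ together with small $\delta$ subsumes them into the same type of condition.)

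The third and decisive step is to build the $W$-independent family, and this is where $|\partial W|>0$ enters. Since $\mathbf 1_{\partial W}\in L^2(H)$ and translation is $L^1$-continuous, one constructs inductively a summable sequence $(h_n)$ in $H$ with pairwise distinct finite partial sums such that the decreasing sets $E_n=\{x\in H:\ x+\sum_{j\le n}m_jh_j\in\partial W\text{ for all }m_1,\dots,m_n\in\{-1,0,1\}\}$ satisfy $|E_n|\ge c$ for a fixed $c>0$; then $E_\infty=\bigcap_nE_n$ is nonempty, and any $w_*\in E_\infty$ has $w_*+\sum_j m_jh_j\in\partial W$ for every $\{-1,0,1\}$-valued finitely supported $(m_j)$. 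Put $b_i:=w_*+h_i$ (so $x_i=v+w_*+h_i$, which we realise in $\pi_H(\cL)$ up to a negligible error). For a pattern $\sigma$ with on-set $A=\sigma^{-1}(1)\subseteq\{1,\dots,n\}$, the natural candidate for the converging sequence is $-h_A+\tau$ with $h_A:=\sum_{j\in A}h_j$ and $\tau\to 0$, and this reduces the $W$-independence requirement to: every neighbourhood of $0$ contains a $\tau$ that moves the $n$ distinct boundary points $w_*+h_k-h_A$ ($k=1,\dots,n$) into $\inte W$ precisely for $k\in A$ and into $\mathrm{ext}\,W$ precisely for $k\notin A$. These $n$ points lie within $\sum_j\|h_j\|$ of $w_*-h_A$, a scale we may take as small as we wish, and the role of $|\partial W|>0$ is precisely to make this possible: choosing $w_*$ in addition to be a density point of $\partial W$, and choosing the $h_j$ generically so as to avoid countably many bad alignments with the fine structure of $\partial W$ near $w_*$ and its relevant translates, one arranges that at every scale these finitely many ``push in / push out'' conditions decouple, whence all patterns are realisable.

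I expect the genuine obstacle to be exactly this last point: converting ``$\partial W$ has positive Haar measure'' into the assertion that finitely many suitably generic boundary points can be pushed, simultaneously and independently, into either $\inte W$ or $\mathrm{ext}\,W$ by one arbitrarily small translation. Everything else---the characterisation of non-tameness through independence sets, the torus parametrisation of $\Omega$ and the behaviour of Fell-limits, and the elementary measure-theoretic bookkeeping producing $(h_n)$ and $w_*$---is routine; the heart of the argument is that a positive-measure boundary is locally ``disordered enough'' to support an infinite independent family, and making this precise, presumably via density points and a careful multi-scale selection, is where the work lies.
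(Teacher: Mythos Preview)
Your reduction to a statement about the window and your measure-theoretic construction of $(h_n)$ and $w_*$ are both sound and close in spirit to what the paper does. However, the step you yourself flag as the obstacle is indeed the heart of the matter, and your proposed resolution does not work as stated. Having arranged that the points $p_k=w_*+h_k-h_A$ all lie in $\partial W$, you still need a \emph{single} small $\tau$ pushing the $p_k$ with $k\in A$ into $\inte W$ and the others into $W^c$. Taking $w_*$ to be a density point of $\partial W$ is the wrong hypothesis here: it says that \emph{most} small $\tau$ keep $w_*+\tau$ in $\partial W$, which is the opposite of what you want. And ``choose the $h_j$ generically to avoid countably many bad alignments'' is not a proof: for a proper $W$ with fat boundary there is no a priori reason the local in/out structure of $\partial W$ at the various nearby points $p_k$ should decouple, and you have not identified what the bad set of $(h_j)$ actually is or why it is small.

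The paper avoids this difficulty by never first landing in $\partial W$ and then trying to push. Instead it builds, together with a sequence $t_1,t_2,\dots\in L^*$, a binary tree of nested closed balls $(U_a)_{a\in\{0,1\}^*}$ with $U_a\subseteq t_{|a|}^{-1}\inte(V_{a_{|a|}})$, where $V_1=W$ and $V_0=\overline{W^c}$. The inductive step is: given the $2^n$ balls $U_a$ at level $n$ with centres $\gamma_a$, a measure estimate (your $E_n$-argument, rephrased as $\Theta_H^r\bigl(\bigcap_a C\gamma_a^{-1}\bigr)>0$ for $C=\partial W$, valid because the $\gamma_a$ were kept close together) yields a point $h$ with $\gamma_a\in h^{-1}C$ for every $a$; approximate $h$ by some $t_{n+1}\in L^*$. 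Then each $\gamma_a$ lies near $t_{n+1}^{-1}\partial W$, and since $W$ is proper one can choose, \emph{inside each} $U_a$ separately, two smaller balls $U_{a0}\subseteq t_{n+1}^{-1}\inte V_0$ and $U_{a1}\subseteq t_{n+1}^{-1}\inte V_1$. The splitting into ``in'' and ``out'' is thus done locally and independently in each $U_a$ \emph{after} the common $t_{n+1}$ has been fixed; no single translation has to serve several boundary points at once. Any $h$ in the nested intersection along an infinite branch then realises the prescribed pattern, and one also intersects with a given residual set $\cG$ at each stage so that the resulting $h$ lies in $-\cG_W$ and hence corresponds to an honest element of the hull rather than a Fell limit.

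In short, your measure bookkeeping is essentially the right lemma, but it should feed a tree-of-balls construction (choose $t_{n+1}$, then split each ball) rather than a ``first hit the boundary everywhere, then push with one $\tau$'' scheme.
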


We note that the above conclusions also hold for regular model sets whose internal group
is the circle and whose window has a Cantor set boundary, see
Theorem~\ref{t.irregular_implies_free_set}. The question whether tame implies
regular has actually been asked first in the more general context of topological
group actions \cite{Glasner2018MinimalTameSystems}. By modifying
the proof of Theorem~\ref{t.tame_implies_regular_for_model_sets}, the above
result can be extended to this setting.
 Hence, we obtain the following positive answer to
 \cite[Problem~5.7]{Glasner2018MinimalTameSystems}.

\begin{thm} \label{t.tame_implies_regular_for_group_actions}
  Suppose that $(X,T)$ is an almost automorphic topological group action.  If
  $(X,T)$ is tame, then it is a regular extension of its maximal equicontinuous
  factor.
\end{thm}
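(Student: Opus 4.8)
The plan is to argue by contraposition, following the scheme of Theorem~\ref{t.tame_implies_regular_for_model_sets}: if the almost automorphic extension $\pi\colon X\to Y$ of the maximal equicontinuous factor $(Y,T)$ fails to be a regular extension, we produce an infinite independence set for a pair of disjoint closed subsets of $X$, which by the characterisation of tameness recalled in Section~\ref{Preliminaries} rules out tameness. (We may assume $X$, and hence $Y$, metrisable.) Let $\nu$ be the Haar measure on $Y$ and $Y_1\ssq Y$ the residual set of points with singleton fibre. Non-regularity means $\nu(Y_1)<1$, so $Y_0:=Y\smin Y_1$ has positive measure; since $Y_0$ is $T$-invariant and $(Y,T,\nu)$ is uniquely ergodic, in fact $\nu(Y_0)=1$, i.e.\ $\pi^{-1}(y)$ has at least two points for $\nu$-a.e.\ $y$.

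I would first distil a rigid two-branch datum from this. A measurable selection argument yields Borel sections $s_0,s_1$ of $\pi$ with $s_0(y)\neq s_1(y)$ $\nu$-a.e.\ (take $s_0$ an arbitrary Borel section and $s_1(y)$ a Borel choice of a point of $\pi^{-1}(y)$ at maximal distance from $s_0(y)$); picking $\delta>0$ with $\nu(\{y:d(s_0(y),s_1(y))\ge\delta\})>0$ and applying Lusin's theorem, then subdividing into pieces on which $s_0$ and $s_1$ vary by less than $\delta/4$, one obtains a compact $K\ssq Y$ with $\nu(K)>0$ and disjoint closed sets $A_0,A_1\ssq X$ with disjoint neighbourhoods such that $s_i(K)\ssq\inte A_i$. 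Thus for every $y\in K$ the fibre $\pi^{-1}(y)$ meets both open sets $\inte A_0$ and $\inte A_1$.

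The core is to build an infinite set $S=\{t_1,t_2,\dots\}\ssq T$ that is an independence set for $(A_0,A_1)$: $\bigcap_{t\in F}t^{-1}\inte A_{\sigma(t)}\neq\emptyset$ for every finite $F\ssq S$ and $\sigma\colon F\to\{0,1\}$. Generalising the window argument of Theorem~\ref{t.tame_implies_regular_for_model_sets}, I would do this inductively, maintaining after step $n$ a Borel set $Z_n\ssq K$ with $\nu(Z_n)>0$ and Borel selections $z_\sigma\colon Z_n\to X$ for $\sigma\in\{0,1\}^n$ with $z_\sigma(y)\in\pi^{-1}(y)$ and $t_iz_\sigma(y)\in\inte A_{\sigma(i)}$ for $i\le n$. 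In the inductive step one exploits the group structure of $Y$: the function $g\mapsto\nu\bigl(Z_n\cap g^{-1}K\cap C\bigr)$, where $C\ssq Y$ gathers finitely many positive-measure sets carried along from step $n$, is continuous on $Y$ (strong continuity of translation on $L^2(Y,\nu)$) and has positive integral, hence is positive on a nonempty open subset of $Y$, which meets the dense image of $T$; choosing $t_{n+1}\in T$ there and letting $Z_{n+1}\ssq Z_n$ be the resulting positive-measure set, one has over each $y\in Z_{n+1}$ simultaneously the old selections $z_\sigma(y)$ sitting inside the \emph{open} cylinders $\bigcap_{i\le n}t_i^{-1}\inte A_{\sigma(i)}$ and, over $t_{n+1}y\in K$, the fresh binary alternative $s_0(t_{n+1}y)\in\inte A_0$ versus $s_1(t_{n+1}y)\in\inte A_1$ in $\pi^{-1}(t_{n+1}y)$; splicing them yields the length-$(n+1)$ selections $z_{\sigma\frown j}$. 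Since every finite $F\ssq S$ lies in an initial segment and any word on $F$ extends to one on that segment, $S$ is an infinite independence set, so $(X,T)$ is not tame.

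The real obstacle — the one genuinely new point compared with Theorem~\ref{t.tame_implies_regular_for_model_sets} — is the splicing: one must guarantee that the fresh binary choice can be installed \emph{inside} $C_\sigma(y):=\{z\in\pi^{-1}(y):t_iz\in\inte A_{\sigma(i)},\ i\le n\}$, i.e.\ that $t_{n+1}C_\sigma(y)$ meets \emph{both} $\inte A_0$ and $\inte A_1$, not merely that the full fibre $\pi^{-1}(t_{n+1}y)$ does. In the model set case this is automatic: $\pi^{-1}(y)$ is literally a product $\prod\{0,1\}$ indexed by the lattice points landing in $h+\partial W$, $C_\sigma(y)$ is a subproduct in which the new coordinate is still free, and so the general-position requirement collapses to the elementary estimate $\sum_i\nu_H\bigl((w_i-\partial W)\triangle(h^\ast-\partial W)\bigr)<\nu_H(\partial W)$. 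For a general almost automorphic extension there is no such product structure, and the needed independence of the new bit from the constraints at $t_1,\dots,t_n$ has to be forced out of the bare fact $\nu(Y_0)=1$; this requires (a) arranging $t_{n+1}$ so that, over $Z_{n+1}$, the new \emph{coordinate} $z\mapsto\bigl(t_{n+1}z\in\inte A_0 \text{ vs. } \inte A_1\bigr)$ depends essentially only on $\pi(z)$ and not on which point of the fibre $z$ is, via a compactness argument using equicontinuity of $(Y,T)$ and density of its orbits, and (b) keeping quantitative control of how \emph{thick} the sets $C_\sigma(y)$ remain inside the fibres, so that the perturbation realising the new bit does not destroy the earlier constraints. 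This bookkeeping, rather than any isolated trick, is where the work of the proof lies.
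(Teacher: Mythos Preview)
Your proposal correctly identifies the contrapositive setup and the Lusin-type reduction to a positive-measure set $K$ over which the fibre carries two separated branches, but it has a genuine gap at exactly the point you flag as the ``real obstacle'': the splicing step is not carried out, and the mechanisms you suggest in (a) and (b) are not made precise. In a general almost automorphic extension there is no reason why, for positively many $y$, the sets $C_\sigma(y)$ should retain enough ``thickness'' that $t_{n+1}C_\sigma(y)$ meets both $\inte A_0$ and $\inte A_1$; your hint that one can force the new bit to ``depend essentially only on $\pi(z)$'' via equicontinuity of the base is not a proof, and it is unclear how to make it one without already having the answer.

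The paper's proof sidesteps this difficulty entirely, and the idea you are missing is simple and decisive: rather than building the independence set inside the non-trivial fibres, one works in the \emph{base} and then lifts through \emph{singleton} fibres. Concretely, one takes $U_0,U_1$ as you do, sets $V_i=\beta(U_i)\ssq H$, and observes (via Lemma~\ref{l.proper_subsets} and the continuity of $\xi\mapsto\beta^{-1}(\xi)$ on $K$) that $V_0,V_1$ are proper with disjoint interiors and $\mu(V_0\cap V_1)>0$. After pulling back to the Ellis group $E(H)$, Proposition~\ref{p.window_translates} applies with the residual set $\cG=\pi^{-1}(\beta(X_0))$; it produces an infinite $I\ssq T$ and, for each $a\in\{0,1\}^I$, a point $h\in\beta(X_0)$ with $th\in\inte(V_{a_t})$ for all $t\in I$. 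Because $h$ lies in $\beta(X_0)$, the fibre $\beta^{-1}(h)$ is a single point $\xi$, and $th\in V_{a_t}=\beta(U_{a_t})$ forces $t\xi\in U_{a_t}$ automatically. Thus the entire splicing problem evaporates: the freedom needed at each step of the induction is already built into Proposition~\ref{p.window_translates} (via Lemma~\ref{lem: measure_estimate}), and the passage from base to total space is trivial. Your inductive scheme with selections $z_\sigma$ over positive-measure sets $Z_n$ is fighting a problem that the paper never encounters.
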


 Due to Theorem~\ref{t.tame_implies_regular_for_model_sets}, non-tameness can be
 seen as the minimal dynamical complexity an irregular model set must
 exhibit. As a direct consequence, one also obtains positive topological
 sequence entropy.  It is natural to ask if there are further or stronger
 dynamical implications of irregularity. In particular, Moody has raised the
 question whether irregular model sets need to have positive topological entropy
 (see \cite{PleasantsHuck2013KFreePoints}), and Schlottmann suggested that they
 cannot be uniquely ergodic \cite{Schlottmann1999GeneralizedModelSets}.

In the general setting of cut and project schemes, however, it is not too
difficult to give negative answers to these questions. The reason is that any
Toeplitz sequence can be interpreted as a model set
\cite{BaakeJaegerLenz2016ToeplitzFlowsModelSets}, and examples of uniquely
ergodic and zero entropy irregular Toeplitz flows have long been known
\cite{Williams1984ToeplitzFlows,MarkleyPaul1979}. The situation is different in
the more restrictive setting of Euclidean cut and project schemes, where both
questions were still completely open. Using methods from low-dimensional
dynamics, we construct counterexamples and obtain the following.

\begin{thm} \label{t.counterexamples}
  There exist irregular model sets arising from Euclidean cut and project
  schemes such that the translation action on the dynamical hull is uniquely
  ergodic and has zero topological entropy.
\end{thm}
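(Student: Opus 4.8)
The plan is to construct, inside a low-dimensional Euclidean cut and project scheme, a window $W$ with $\Leb(\partial W)>0$ for which the translation action on the hull is uniquely ergodic and has zero topological entropy; the construction of $W$ is modelled on Toeplitz-type and Denjoy-type constructions from low-dimensional dynamics. Concretely, we work with the ``geometric Sturmian'' scheme: let $\cL=\Z^2\ssq\R^2$, let $E\ssq\R^2$ be the line through the origin of irrational slope $\rho$ (physical space) with orthogonal complement $E^\perp$ (internal space), and let $\pi,\pi^\perp$ denote the orthogonal projections onto $E$ and $E^\perp$. Then $\pi|_\cL$ is injective and $\pi^\perp(\cL)$ is dense in $E^\perp\cong\R$, so every relatively compact $W\ssq\R$ with nonempty interior defines a model set $\oplam(W)$ with hull $\Omega_W$, and the $\R$-action on $\Omega_W$ is an almost automorphic extension of the minimal rotation on the torus $\T^2=\R^2/\Z^2$, one-to-one exactly over those torus points whose internal orbit avoids $\partial W$; a Poincaré section turns this into an almost one-to-one extension of an irrational circle rotation $R_\rho$. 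Irregularity of $\oplam(W)$ is precisely the condition $\Leb(\partial W)>0$, i.e.\ that the singular set --- over which the extension is not one-to-one --- has positive Haar measure. We emphasise that in this situation neither unique ergodicity nor zero entropy comes for free: minimal almost automorphic systems can have positive entropy and can fail to be uniquely ergodic, as the theory of Toeplitz flows shows. Both properties therefore have to be built into $W$.

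The window is constructed by a hierarchical, self-referential scheme adapted to the continued fraction expansion of $\rho$, with convergents $p_k/q_k$. One defines a sequence of windows $W_k$, each a finite union of intervals whose endpoints lie on finitely many $R_\rho$-orbits, and passes from $W_k$ to $W_{k+1}$ by removing (or adjoining) a $q_k$-syndetically placed family of short intervals of total length at most $\epsilon_k$, where $\sum_k\epsilon_k<\infty$; the combinatorial placement of these intervals is governed by a fixed substitution-like rule acting on the scale-$q_k$ structure of the rotation. Summability of the $\epsilon_k$ guarantees that the $W_k$ converge to a relatively compact limit window $W$ with nonempty interior, and the placement rule is chosen so that the accumulated boundary $\partial W$ is a fat Cantor set of positive Lebesgue measure. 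Choosing $\rho$ so that $q_{k+1}/q_k$ grows rapidly provides ample room at each step and simplifies the quantitative estimates.

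Zero topological entropy is deduced from an upper bound on the growth of the pattern complexity of $\oplam(W)$: along the regular part the configuration eventually coincides with that of the underlying Sturmian system, and the rigidity of the hierarchical rule keeps the number of additional patterns of diameter at most $n$ produced by boundary crossings subexponential in $n$; the variational principle then yields $\htop(\Omega_W)=0$. Unique ergodicity is deduced from the uniform existence of all patch frequencies of $\oplam(W)$, which for Delone dynamical systems is equivalent to unique ergodicity of the hull: a Weyl-type equidistribution argument applied to the hierarchy scale by scale, with the tails $\sum_{k\ge K}\epsilon_k$ controlling the error terms, shows that the number of occurrences of any fixed patch in a large region, normalised by the region's volume, converges uniformly in the translation parameter; equivalently, the constraints that invariance imposes on the conditional measures along the (Cantor) fibres over the singular set admit only one solution. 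Transferring these conclusions from the section system to the Delone hull $\Omega_W$ of the cut and project scheme is routine.

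The main obstacle is the tension between the three requirements. Making $\Leb(\partial W)>0$ is easy, and a cruder construction achieves zero entropy at the cost of unique ergodicity, or vice versa; the difficulty is that a positive-measure boundary forces, by the Steinhaus--Weil theorem, the difference set $\partial W-\partial W$ to contain a neighbourhood of $0$ and hence infinitely many elements of the dense group $\pi^\perp(\cL)$, so that for Haar-a.e.\ phase the internal orbit meets $\partial W$ infinitely often and the singular fibres are genuine Cantor sets rather than finite sets. A priori this permits both exponentially many patterns of a given size (hence possibly positive entropy) and a large simplex of fibrewise conditional measures (hence possibly several invariant measures). The heart of the proof is to design the self-referential combinatorics of the interval placement so that it simultaneously collapses the pattern count to subexponential growth and pins down the fibrewise conditional measures uniquely --- this is precisely where the hierarchical, Toeplitz-like structure, together with the freedom provided by the fast growth of the $q_k$, is used.
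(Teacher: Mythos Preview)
Your proposal correctly identifies the setting (a planar CPS built on an irrational rotation, with the window constructed hierarchically along the continued-fraction scale structure) and the central tension: a positive-measure boundary forces infinitely many orbit points into $\partial W$, so one cannot rely on finite fibres and must engineer the window so that the resulting fibre structure is tame enough to force both zero entropy and unique ergodicity. But what you have written is a plan, not a proof. The sentence ``design the self-referential combinatorics of the interval placement so that it simultaneously collapses the pattern count to subexponential growth and pins down the fibrewise conditional measures uniquely'' is precisely the content of the theorem, and you have not said what that design is or why it works. The assertions about subexponential complexity and uniform patch frequencies are not argued; a Weyl-type equidistribution ``scale by scale'' does not by itself control what happens over the positive-measure singular set, and it is exactly there that non-unique-ergodicity enters.

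The paper supplies the missing mechanism via a structural criterion on the window rather than via complexity or frequency estimates. The key notion is that $W$ has \emph{locally disjoint complements}: for any two points $l_1^*,l_2^*$ on the same $L^*$-orbit lying in $\partial W+t$, the sets $(B_\eps(l_i^*)\cap(W+t)^c)-l_i^*$ are disjoint for small $\eps$. This single condition forces every fibre $\beta^{-1}(\xi)$ to contain a unique maximal element $\Gamma_+$, with every other element differing from $\Gamma_+$ in at most one point; in particular the fibres are countable, not Cantor sets as you anticipate. From this, zero fibre entropy is a direct spanning-set count, and unique ergodicity follows because Birkhoff averages of any two elements of the same fibre agree. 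The window $V$ realising this is obtained by first building a fat self-similar Cantor set $C\ssq\T^1$ from the closest-return partitions $\cP_{n_\ell}$ of the rotation, and then filling all gaps of $C$ except one of each level; self-similarity of $C$ then yields the locally-disjoint-complements property. This replaces your unspecified ``substitution-like rule'' and the attendant complexity/frequency analysis by a verifiable geometric condition on the window whose dynamical consequences are immediate.
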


In fact, we obtain two different types of examples: in the first case, the
translation action is an at most two-to-one extension of its maximal
equicontinuous factor and has zero entropy, but exhibits two distinct ergodic
invariant measures. In the second case, the translation action is mean equicontinuous and hence, in particular, 
uniquely ergodic with zero entropy.
In this case, the fibres of the
factor map onto the maximal equicontinuous factor are almost surely countably infinite. \medskip

The paper is organised as follows. The required preliminaries are provided in
Section~\ref{Preliminaries}. Theorems~\ref{t.tame_implies_regular_for_model_sets}
and \ref{t.tame_implies_regular_for_group_actions} are proven in separate
subsections of Section~\ref{TameImpliesRegular}. Readers who are mainly
interested in the result on minimal group actions may directly start with
Section~\ref{GroupActions}
and continue
afterwards with Proposition~\ref{p.window_translates} which plays a crucial
role in the proof of Theorem~\ref{t.tame_implies_regular_for_group_actions}.

The remaining sections are devoted to the construction and study of uniquely
ergodic and zero entropy examples in Euclidean CPS. General criteria for these
dynamical properties in terms of the window structure are provided in
Section~\ref{Self-similarity}, whereas the actual construction of windows with
the required properties is carried out in
Section~\ref{MainConstruction}. Finally, we discuss some implications of these
constructions for the (non-continuous) dependence of entropy on the window and
the diffraction spectra of our examples in Sections~\ref{MainConstruction} and \ref{Spectra}, respectively. \bigskip

\noindent{\bf Acknowledgements.} Parts of the above results have been obtained
during a workshop on {\em Spectral structures and topological methods in
  mathematical quasicrystals}, held in Oberwolfach, 1-7 October 2017. We thank
both the MFO and the organisers M.~Baake, D.~Damanik, J.~Kellendonk and D.~Lenz
for creating this opportunity. TJ has been supported by a Heisenberg
professorship of the German Research Council (DFG grant OE 538/6-1).  Moreover,
this project has received funding from the European Union's Horizon 2020
research and innovation programme under the Marie Sklodowska-Curie grant
agreement No 750865.

\section{Preliminaries} 
\label{Preliminaries}

 \subsection{Some topological dynamics}\label{TopDyn}

 Let $(X,T,\phi)$ be a {\em topological dynamical system}, that is, $T$ is a
 topological group, $X$ a Hausdorff topological space and $\phi$ a continuous
 left action of $T$ on $X$ by homeomorphisms on $X$.  We write $tx$ for the
 image $\phi(t,x)$ of the action of $t\in T$ on $x\in X$.  Most of the time, we
 keep the action $\phi$ implicit and simply refer to $(X,T)$ as a topological
 dynamical system.  In all of the following, $X$ is assumed to be compact
 metric.  $(X,T)$ is called \emph{minimal} if the orbit of every point $x\in X$
 is dense in $X$, that is, $\overline{Tx}=X$.  We say that $(X,T)$ is {\em
   equicontinuous} when the action of $T$ (considered as a collection of
 self-maps on $X$) is equicontinuous.  It is well-known that this is the case if
 and only if the metric on $X$ can be chosen invariant under the action of $T$.
 Some of the examples constructed in this work show a closely related but less rigid dynamical behaviour
 referred to as \emph{mean equicontinuity}.
 We refer the reader to the literature (e.g., \cite{Fomin1951, Auslander1959,LiTuYe2015,DownarowiczGlasner2015IsomorphicExtensionsAndMeanEquicontinuity,FuhrmannGrogerLenz2018}) 
 for more information on mean equicontinuous systems.
 
 Given a topological dynamical system $(X,T)$, a Borel probability measure $\mu$ on $X$ is called \emph{$T$-invariant}
 if $\mu(\cdot)=\mu(t\, \cdot)$ for all $t\in T$.
 Recall that two $T$-invariant measures $\mu_1$ and $\mu_2$ on $X$ coincide if and only if
 $\int_X\!f\,d\mu_1=\int_X\!f\,d\mu_2$ for every $f$ from the set $\cC(X)$ of
 continuous real-valued functions on $X$.
 An invariant measure is called \emph{ergodic} if for all measurable sets $A\ssq X$ with $t A=A$ ($t\in T$)
 we have $\mu(A)\in \{0,1\}$.
 $(X,T)$ is called \emph{uniquely ergodic} if there exists exactly one invariant measure $\mu$.
 Note that in this case, the unique invariant measure $\mu$ is ergodic.
 
 Suppose $(X,T)$ and $(H,T)$ are topological dynamical systems. Then $(H,T)$
 is called a {\em factor} of $(X,T)$ if there exists a continuous onto map
 $\beta:X\to H$ such that $\beta(tx)=t\beta(x)$ for all $t\in
 T$. The map $\beta$ is called a {\em factor map} in this situation and the preimages of singletons under 
 $\beta$ are referred to as its \emph{fibres}.
 If $\beta$ is bijective, it is called an \emph{isomorphy} and $(X,T)$ and $(H,T)$ are said to be \emph{isomorphic}.
 It is well-known that factor maps preserve minimality and unique ergodicity.

 Given a topological dynamical system $(X,T)$, the {\em Ellis semigroup}
 $E(X)$ associated to $(X,T)$ is defined as the closure of 
 $\{x\mapsto tx \mid t\in T\}\ssq X^X$ in the product topology, where the (semi-)group operation is given by the composition.
 On $E(X)$, we may consider the $T$-action given by $E(X)\ni\tau\mapsto t\tau$ for each element $t\in T$.
 
\begin{thm}[{\cite[pp.~52--53]{auslander1988minimal}}]\label{thm: representation of general equicont sys}
 Suppose $H$ is a compact metric space and $(H,T)$ is minimal and equicontinuous.
 Then $E(H)$ is a compact metrisable topological group.
 Further, we have the following.
 \begin{enumerate}[(a)]
  \item If $T$ is abelian, then $E(H)$ is abelian and $(H,T)$ is isomorphic to $(E(H),T)$.
 \item For general $T$, $(H,T)$ is a factor of $(E(H),T)$, where the factor map $\pi$ is given by
 \[
  \pi\:E(H)\to H,\qquad \tau\mapsto \tau h
 \]
 for some fixed $h\in H$.
 In particular, $\pi$ is open.
 \end{enumerate}
\end{thm}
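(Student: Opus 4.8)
The plan is to work from the outset with the topology of uniform convergence on $E(H)$ in place of the product topology, which is legitimate precisely because of equicontinuity, and then to recognise $E(H)$ as a compact group of isometries. First I would use the fact recalled above that, $(H,T)$ being minimal and equicontinuous, the metric $d$ on $H$ may be taken $T$-invariant; each $t\in T$ then acts as a surjective isometry $\tau_t\colon x\mapsto tx$ of $(H,d)$. Since pointwise and uniform convergence agree on an equicontinuous family of self-maps of a compact metric space, the product-topology closure $E(H)$ of $\{\tau_t\mid t\in T\}$ coincides with its closure in the uniform topology and is itself equicontinuous; by the Arzel\`a--Ascoli theorem $E(H)$ is compact, and it is metrised by $\rho(\sigma,\tau)=\sup_{x\in H}d(\sigma x,\tau x)$.

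Next I would verify that $E(H)$ is a compact topological group. Each $\tau\in E(H)$, being a uniform limit of isometries, is distance preserving, hence injective; a distance-preserving self-map of a compact metric space is automatically onto (otherwise the forward $\tau$-orbit of a point outside $\tau(H)$ would admit no Cauchy subsequence), so $E(H)$ consists of surjective isometries. The bound $\rho(\sigma\tau,\sigma'\tau')\le\rho(\sigma,\sigma')+\rho(\tau,\tau')$, using that $\sigma'$ is an isometry, shows that composition is jointly $\rho$-continuous; inversion is even a $\rho$-isometry; and a short compactness argument then shows that $E(H)$ is closed under inverses, i.e.\ the closed subsemigroup $E(H)$ of the compact group of surjective isometries of $H$ is a subgroup. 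The prescribed $T$-action $\tau\mapsto t\tau$ is left translation by $\tau_t\in E(H)$, hence continuous. This is the step I expect to cost the most: upgrading the a priori only separately continuous, product-topology Ellis semigroup to an honest compact metrisable topological group. The rest is soft.

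For part (b), fix $h\in H$ and set $\pi(\tau)=\tau h$. Evaluation at $h$ is continuous and $\pi(t\tau)=(t\tau)(h)=t(\tau h)=t\pi(\tau)$, so $\pi$ is continuous and equivariant; its image is compact and contains $Th$, hence equals $\overline{Th}=H$ by minimality, so $(H,T)$ is a factor of $(E(H),T)$. For openness, the stabiliser $L=\{\tau\in E(H)\mid\tau h=h\}$ is a closed subgroup, the induced map $E(H)/L\to H$ is a continuous equivariant bijection from a compact space onto a Hausdorff space and therefore a homeomorphism, while the quotient map $E(H)\to E(H)/L$ is open; composing, $\pi$ is open.

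For part (a), if $T$ is abelian then the family $\{\tau_t\mid t\in T\}$ commutes, so by joint continuity of composition $E(H)$ is abelian. Then $\pi$ is injective: if $\tau h=\sigma h$ and $x\in H$, pick a net $(t_n)$ in $T$ with $t_nh\to x$; using $\tau\tau_{t_n}=\tau_{t_n}\tau$ and the continuity of $\tau$ we get $\tau x=\lim_n\tau(t_nh)=\lim_n t_n(\tau h)=\lim_n t_n(\sigma h)=\lim_n\sigma(t_nh)=\sigma x$, hence $\tau=\sigma$. Thus $\pi$ is an equivariant continuous bijection from a compact space onto a Hausdorff space, so an isomorphism of topological dynamical systems, which proves (a).
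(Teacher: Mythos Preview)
The paper does not give its own proof of this statement; it is quoted verbatim from Auslander's book with the citation \cite[pp.~52--53]{auslander1988minimal} and used as a black box. There is therefore nothing in the paper to compare your argument against.

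That said, your proof is essentially correct and follows the standard line one finds in Auslander's text: pass to an invariant metric, identify $E(H)$ with a uniformly closed family of isometries via Arzel\`a--Ascoli, check the group axioms and joint continuity in the sup metric, and then read off (a) and (b) from the evaluation map. One small point of presentation: your parenthetical justification that a distance-preserving self-map of a compact metric space is surjective (``otherwise the forward $\tau$-orbit of a point outside $\tau(H)$ would admit no Cauchy subsequence'') is correct in spirit but slightly compressed; the clean statement is that if $y\notin\tau(H)$ then $d(\tau^m y,\tau^n y)=d(y,\tau^{n-m}y)\ge d(y,\tau(H))>0$ for all $m<n$, so $(\tau^n y)$ is uniformly separated. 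Similarly, the ``short compactness argument'' for closure under inverses deserves one sentence: the closed subsemigroup generated by any $\tau\in E(H)$ is compact and consists of bijections, hence contains an idempotent which must be $\mathrm{id}_H$, and then $\tau^{-1}$ lies in the closure of $\{\tau^n\mid n\ge 1\}\subseteq E(H)$. With these two expansions the write-up is complete.
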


\begin{rem}
 Throughout this article, abelian groups are always denoted as additive groups,
 whereas general (possibly non-commutative) groups are multiplicative.
\end{rem}

A topological dynamical system $(H,T)$ is called a {\em maximal equicontinuous
factor (MEF)} of $(X,T)$ if it is an equicontinuous factor of $(X,T)$ with
the additional property that every other equicontinuous factor of $(X,T)$ is also a factor of $(H,T)$.

 \begin{lem}[{\cite[p.\ 125, Theorem 1]{auslander1988minimal}}]\label{l.MEF}
   Suppose $(X,T)$ is a topological dynamical system with $X$ compact metric.
   Then $(X,T)$ has a unique (up to conjugacy) MEF $(H,T)$ with $H$ compact
   metric.
 \end{lem}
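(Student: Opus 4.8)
The plan is to realise the maximal equicontinuous factor as the quotient of $X$ by the smallest closed invariant equivalence relation that produces an equicontinuous quotient system. First I would record a fact used throughout: every factor of $(X,T)$ is again compact metric. Indeed, a factor map $\beta\colon X\to Y$ identifies $Y$ with the quotient $X/R_\beta$ of $X$ by the closed $T$-invariant equivalence relation $R_\beta=\{(x,x')\in X\times X:\beta(x)=\beta(x')\}$, and a quotient of a compact metric space by a closed equivalence relation is again compact metric (its space of continuous functions embeds isometrically into the separable space $\cC(X)$). Consequently, every equicontinuous factor of $(X,T)$ is conjugate, compatibly with the canonical projection, to $(X/R,T)$ for some $R$ in the collection $\mathcal{R}$ of all closed $T$-invariant equivalence relations on $X$ whose quotient action is equicontinuous. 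Note that $\mathcal{R}$ is a \emph{set}, being a family of closed subsets of $X\times X$.

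For existence I would put $R_\infty:=\bigcap_{R\in\mathcal{R}}R$, which is again a closed $T$-invariant equivalence relation, and claim that $(H,T):=(X/R_\infty,T)$ is a MEF. The crucial point is that $X/R_\infty$ is still equicontinuous. To see this, consider the map $\iota\colon X/R_\infty\to\prod_{R\in\mathcal{R}}X/R$ induced by the canonical projections $X/R_\infty\to X/R$, which is well defined since $R_\infty\subseteq R$; it is continuous and $T$-equivariant, and it is injective because $(x,y)\in R_\infty$ holds precisely when $(x,y)\in R$ for every $R\in\mathcal{R}$. As $X/R_\infty$ is compact Hausdorff, $\iota$ is thus a conjugacy onto a closed $T$-invariant subset of the product. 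Now an arbitrary product of equicontinuous systems is equicontinuous --- one argues with the product uniformity, whose basic entourages involve only finitely many coordinates --- and a closed subsystem of an equicontinuous system is equicontinuous; hence $X/R_\infty$ is equicontinuous, and by the fact above it is compact metric. It remains to see that $(H,T)$ dominates every equicontinuous factor: if $\beta'\colon X\to H'$ is an equicontinuous factor and $(H',T)\cong(X/R,T)$ with $R\in\mathcal{R}$, then $R\supseteq R_\infty$, and the canonical projection $X/R_\infty\to X/R$ composed with this conjugacy exhibits $(H',T)$ as a factor of $(H,T)$ via a map commuting with the projections to $X$. Thus $(H,T)$ is a MEF with the universal property that every equicontinuous factor of $X$ is a factor of it through a map commuting with the canonical factor maps.

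For uniqueness up to conjugacy, suppose $(H_1,\beta_1)$ and $(H_2,\beta_2)$ are MEFs with this universal property. Applying the universal property of $H_1$ to $\beta_2$ yields a factor map $p\colon H_1\to H_2$ with $p\circ\beta_1=\beta_2$, and symmetrically a factor map $q\colon H_2\to H_1$ with $q\circ\beta_2=\beta_1$; then $q\circ p\circ\beta_1=\beta_1$, and since $\beta_1$ is surjective this forces $q\circ p=\mathrm{Id}_{H_1}$, and likewise $p\circ q=\mathrm{Id}_{H_2}$, so $p$ is a conjugacy. I expect the main obstacle to be the claim that $X/R_\infty$ remains equicontinuous, that is, that equicontinuity survives arbitrary intersections of the relations in $\mathcal{R}$; this is precisely where one must pass through the possibly non-metrisable product system and use the uniform-space characterisation of equicontinuity rather than a single invariant metric, after which the remaining steps are routine.
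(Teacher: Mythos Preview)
The paper does not give its own proof of this lemma; it is stated with a citation to Auslander's monograph and used as a black box. There is therefore nothing in the paper to compare your argument against.

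Your proposal is correct and is essentially the classical construction: intersect all closed $T$-invariant equivalence relations yielding equicontinuous quotients, and verify that the resulting quotient is still equicontinuous by embedding it into the product of all equicontinuous factors. Your handling of the one genuine subtlety --- that the product $\prod_{R\in\mathcal{R}}X/R$ need not be metrisable, so equicontinuity must be checked via uniformities rather than via an invariant metric --- is exactly right, and the observation that $X/R_\infty$ nonetheless remains compact \emph{metric} because it is a quotient of $X$ by a closed relation closes the loop.

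One small point worth tightening: your uniqueness argument assumes both candidate MEFs satisfy the \emph{strong} universal property (factor maps to other equicontinuous factors exist \emph{and commute with the projections from $X$}), whereas the paper's definition only requires that every equicontinuous factor be \emph{some} factor of the MEF. What you have actually shown is that your constructed $(X/R_\infty,T)$ satisfies the strong property; to conclude uniqueness in the paper's sense you should observe that any MEF $(H',\beta')$ corresponds to some $R'\in\mathcal{R}$, hence $R_\infty\subseteq R'$, so the canonical projection $X/R_\infty\to X/R'\cong H'$ is a factor map commuting with the projections --- and then your argument applies. This is routine, but as written the final paragraph glosses over the passage from the paper's definition to the commuting-triangle form.
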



 Given metric spaces $X$ and $H$, a continuous map $\beta:X\to H$ is called {\em
   almost one-to-one} if
 \[
 X_0 \ = \ \{ x\in X\mid \beta^{-1}(\{\beta(x)\}) = \{x\} \}
 \]
 is dense in $X$. Points in $X_0$ are called {\em injectivity points} of
 $\beta$. If $\beta$ is an almost one-to-one factor map between topological
 dynamical systems $(X,T)$ and $(H,T)$, then $(X,T)$ is called an {\em almost
  one-to-one extension} of $(H,T)$.
It is easy to see that the sets $X_0$ and $\beta(X_0)$ are
residual subsets of $X$ and $H$, respectively.
Moreover, observe that if $(H,T)$ is minimal, 
then $(X,T)$ is also minimal.

 The following elementary fact about almost one-to-one maps will be
 useful. Recall that a compact set $W\ssq X$ is called {\em proper} if
 $\overline{\inte(W)}=W$.
 \begin{lem}
   \label{l.proper_subsets} Suppose that $X$ and $H$ are metric spaces and
   $\beta:X\to H$ is almost one-to-one. Then images of proper subsets of $X$ under
   $\beta$ are proper subsets of $H$.
 \end{lem}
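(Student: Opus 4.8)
The plan is to reduce the statement to the local behaviour of $\beta$ at its injectivity points, where the purely set-theoretic condition $\beta^{-1}(\{\beta(x)\})=\{x\}$ can be upgraded to an honest open neighbourhood of $\beta(x)$. Let $W\ssq X$ be proper and compact, so $W=\overline{\inte(W)}$; we may assume $\inte(W)\neq\emptyset$, since otherwise $W=\emptyset$ and there is nothing to prove. As $X$ is compact and $H$ Hausdorff, $\beta$ is a closed map, and $\beta(W)$ is compact, hence closed, which already yields $\overline{\inte(\beta(W))}\ssq\overline{\beta(W)}=\beta(W)$. (We also use that $\beta$ is onto; this holds whenever $\beta$ is a factor map, and is in any case forced in the present setting, since $\beta(X_0)$ is residual, hence dense, in $H$, while $\beta(X)$ is closed by compactness, so $\beta(X)=H$.) For the converse inclusion I would first observe that $X_0\cap\inte(W)$ is dense in $W$: indeed $X_0$ is dense in $X$ and $\inte(W)$ is open, so $X_0\cap\inte(W)$ is dense in $\inte(W)$, whence its closure contains $\overline{\inte(W)}=W$, and it is also contained in the closed set $W$. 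Continuity of $\beta$ together with compactness of $W$ then gives $\beta(W)=\overline{\beta(X_0\cap\inte(W))}$.

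The crux is the claim that $\beta(x)\in\inte(\beta(W))$ for every $x\in X_0\cap\inte(W)$. Granting this, we obtain $\beta(W)=\overline{\beta(X_0\cap\inte(W))}\ssq\overline{\inte(\beta(W))}$, which combined with the first paragraph shows that $\beta(W)$ is proper. To prove the claim, fix such an $x$. Being an injectivity point, $\beta^{-1}(\{\beta(x)\})=\{x\}\ssq\inte(W)$, so the closed set $X\smin\inte(W)$ contains no preimage of $\beta(x)$, i.e.\ $\beta(x)\notin\beta(X\smin\inte(W))$. Since $\beta$ is a closed map, $N:=H\smin\beta(X\smin\inte(W))$ is an open neighbourhood of $\beta(x)$. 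Moreover $N\ssq\beta(W)$: any $z\in N$ equals $\beta(u)$ for some $u\in X$ by surjectivity, and $u\in\inte(W)$ (otherwise $z\in\beta(X\smin\inte(W))$), so $z\in\beta(W)$. Hence $\beta(x)\in N\ssq\beta(W)$ with $N$ open, which is the claim.

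I expect this claim to be the only genuine obstacle. It is false without passing to injectivity points — a continuous surjection can carry an interior point of a proper compact set onto the boundary of its image (the devil's staircase is the standard picture) — so the density of $X_0$ is essential; and converting the set-theoretic identity $\beta^{-1}(\{\beta(x)\})=\{x\}$ into an open neighbourhood of $\beta(x)$ requires $\beta$ to be a closed map, which is where the compactness of $X$ is used. Everything else is soft point-set topology.
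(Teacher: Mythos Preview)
Your proof is correct. The paper does not supply a proof of this lemma --- it is stated as an ``elementary fact'' and left to the reader --- so there is no approach to compare against. You correctly identify and use the two hypotheses that the lemma's statement leaves implicit: compactness of $X$ (a standing assumption in the paper's preliminaries) and surjectivity of $\beta$ (which holds in every application, where $\beta$ is a factor map). Your remarks on why both are genuinely needed --- closedness of $\beta$ to upgrade the fibre condition at injectivity points to an open neighbourhood downstairs, and surjectivity to ensure that neighbourhood lies in $\beta(W)$ --- are accurate; note only that your parenthetical deduction of surjectivity from residuality of $\beta(X_0)$ in $H$ is circular, since the paper's assertion that $\beta(X_0)$ is residual in $H$ is itself made in the context of factor maps and does not hold for arbitrary almost one-to-one maps (e.g.\ an inclusion $[0,1]\hookrightarrow[0,2]$).
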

 Suppose now that $(H,T)$ is equicontinuous and minimal. 
 Then $(H,T)$ is uniquely ergodic with a unique $T$-invariant measure $\mu$.
 If $T$ is abelian, 
 we may assume $H=E(H)$ and obtain $\mu=\Theta_H$ where $\Theta_H$ denotes the Haar measure on $H$.
 In general, $\mu$ equals $\Theta_{E(H)}\circ\pi^{-1}$, where $\pi$ is as in 
 Theorem~\ref{thm: representation of general equicont sys}
 and $\Theta_{E(H)}$ denotes the (left) Haar measure on $E(H)$ (which, since $E(H)$ is compact, coincides
 with the right Haar measure \cite[Theorem~15.13]{HewittRoss1963}). 
 In both cases, an almost one-to-one extension
 $(X,T)$ of $(H,T)$ is called {\em regular}, if the projection
 $H_0=\beta(X_0)$ of the set of injectivity points of $\beta$ has positive $\mu$-measure. 
 Otherwise, it is called {\em irregular}. In the regular case,
 $(X,T)$ is uniquely ergodic and measure-theoretically isomorphic to $(H,T)$
 (with respect to the unique invariant measure). Clearly, by ergodicity of $\mu$, the set $H_0$ has full measure in this case.
 We call a group action $(X,T)$ {\em almost automorphic} if it is an almost one-to-one
 extension of its MEF and the MEF is minimal which, by the above observations, implies that $(X,T)$
 is minimal, too.
 \medskip
 
 A system $(X,T)$ is called {\em tame} if the cardinality of its Ellis semigroup
 $E(X)$ is at most $2^{\aleph_0}$, and {\em non-tame} or {\em wild} otherwise
 \cite{Koehler1995EnvelopingSemigroups,Glasner2006OnTameSystems}. A structure
 theorem for minimal tame systems has been established in
 \cite{Glasner2018MinimalTameSystems} (see also
 \cite{Huang2006TameSystemsAndScrambledPairs,Glasner2006TameSystems,KerrLi2007Independence}).
 If $(X,T)$ allows for an invariant measure, it simplifies to the following
 statement.
 \begin{thm}[{\cite[Corollary~5.4]{Glasner2018MinimalTameSystems}}]
   \label{t.tame_almost_automorphic}
   Suppose $(X,T)$ is a minimal and tame group action which has an invariant
   probability measure.  Then $(X,T)$ is an almost one-to-one extension of its
   maximal equicontinuous factor.
 \end{thm}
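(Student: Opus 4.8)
\emph{Proof proposal.} Let $(H,T)$ denote the maximal equicontinuous factor of $(X,T)$, which exists and is minimal by Lemma~\ref{l.MEF}, and let $\beta\colon X\to H$ be the corresponding factor map. The goal is to show that $\beta$ is almost one-to-one, i.e.\ that the set of injectivity points
\[
X_0 \ = \ \{x\in X\mid \beta^{-1}(\{\beta(x)\})=\{x\}\}
\]
is dense. The plan is to reduce this to the seemingly weaker statement $X_0\neq\emptyset$. Indeed, equivariance gives $\beta(z)=\beta(tx)=t\beta(x)$ if and only if $\beta(t^{-1}z)=\beta(x)$, so $\beta^{-1}(\{\beta(tx)\})=t\,\beta^{-1}(\{\beta(x)\})$ because each $t\in T$ is a homeomorphism of $X$. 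Hence $X_0$ is $T$-invariant, and since $(X,T)$ is minimal the closed invariant set $\overline{X_0}$ is either empty or all of $X$. Therefore $X_0\neq\emptyset$ already forces $X_0$ to be dense, which is precisely the assertion that $(X,T)$ is an almost one-to-one extension of $(H,T)$.

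The next step is to use tameness to interpret the fibres of $\beta$. Since $|E(X)|\le 2^{\aleph_0}$, the Bourgain--Fremlin--Talagrand dichotomy rules out the non-metrizable alternative (a copy of $\beta\N$ in $E(X)$), so $E(X)$ is a Rosenthal compactum: it is metrizable, sequentially compact, and every $p\in E(X)$ is a Baire class~$1$ self-map of $X$. I would then invoke the structure theory of tame minimal systems (the Glasner--Megrelishvili representation theory on Banach spaces not containing $\ell^1$) to obtain that, for such systems, the proximal relation $P$ is a closed $T$-invariant equivalence relation coinciding with the regionally proximal relation $Q$; since the maximal equicontinuous factor is classically the quotient $X/Q$, this identifies $H=X/P$ and shows that the fibre $\beta^{-1}(\{\beta(x)\})$ is exactly the proximal cell of $x$. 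Under this identification a point lies in $X_0$ precisely when it is a \emph{distal} point, so by the first paragraph it suffices to produce a single distal point.

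This is where the invariant probability measure $\mu$ enters, and it is the crux of the argument. Let $m=\beta_*\mu$, which is the Haar measure on $H$ by unique ergodicity of the minimal equicontinuous factor (cf.\ the discussion preceding this statement and Theorem~\ref{thm: representation of general equicont sys}), and disintegrate $\mu=\int_H \mu_y\,dm(y)$ along $\beta$. The aim is to show that $\mu$-almost every point is distal, equivalently that the relatively independent self-joining $\lambda=\int_H \mu_y\times\mu_y\,dm(y)$, which is supported on $R_\beta=\{(x,x')\mid\beta(x)=\beta(x')\}=\overline{P}$, is in fact carried by the diagonal; this forces $\mu_y$ to be a point mass for $m$-a.e.\ $y$ and, together with the structure above, yields a distal point. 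The mechanism is that the diagonal action on $\overline{P}$ inherits tameness, and tameness---via the absence of infinite independence sets, i.e.\ the Rosenthal structure of $E(X)$---excludes the positive-measure spreading along fibres that a non-trivial proximal cell of positive measure would produce. I expect this measure-rigidity step to be the main obstacle: converting the combinatorial tameness hypothesis into the statement that almost every proximal cell is trivial is exactly the substance of \cite[Corollary~5.4]{Glasner2018MinimalTameSystems} and rests on the full structure theorem proved there. Once a distal point is found, the reduction of the first paragraph shows $X_0\neq\emptyset$, hence $X_0$ is dense and $(X,T)$ is an almost one-to-one, and thus regular-or-irregular almost automorphic, extension of its maximal equicontinuous factor.
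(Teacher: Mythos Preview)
The paper does not prove this theorem at all: it is quoted as \cite[Corollary~5.4]{Glasner2018MinimalTameSystems} and used as a black box in the preliminaries. There is therefore no ``paper's own proof'' to compare against.

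As for the proposal itself, the reduction in your first paragraph is fine, and identifying the fibres of $\beta$ with proximal cells via $P=Q$ is indeed part of the structure theory for tame minimal systems. The genuine gap is that you never carry out the decisive step. You yourself write that showing the relatively independent self-joining is diagonal ``is exactly the substance of \cite[Corollary~5.4]{Glasner2018MinimalTameSystems} and rests on the full structure theorem proved there.'' But that corollary \emph{is} the theorem you are asked to prove, so this is circular: you are invoking the result to establish the result. The disintegration heuristic you describe (``tameness excludes the positive-measure spreading along fibres'') is a plausible slogan, not an argument; turning it into a proof requires the PI-tower/structure machinery of \cite{Glasner2018MinimalTameSystems}, which you do not reproduce. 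Likewise, the assertion that $P$ is a closed equivalence relation coinciding with $Q$ in tame systems is itself a nontrivial consequence of that structure theory and cannot simply be asserted. In summary, your text is an accurate map of where the difficulty lies, but it defers all of the actual work to the cited source rather than supplying an independent proof.
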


 The natural question we will focus on is whether this extension is regular or
 not \cite[Problem~5.7]{Glasner2018MinimalTameSystems}.  (It is, as stated in
 Theorem~\ref{t.tame_implies_regular_for_group_actions} and proved in
 Section~\ref{GroupActions} below.) To that end, the following equivalent
 characterisation of tameness will be useful. We call a pair of closed and
 disjoint subsets $U_0,U_1\ssq X$ an {\em independence pair} if there exists an
 infinite set $S\ssq T$ such that for all $a\in\{0,1\}^S$ there is some $\xi\in
 X$ with
 \[
 t\xi \ \in \ U_{a_t} \qquad (t\in S).
 \]
 \begin{thm}[{\cite[Proposition 6.4]{KerrLi2007Independence}}] \label{t.independence_pair}
   A topological dynamical system $(X,T)$ is non-tame if
   and only if there exists an independence pair.
 \end{thm}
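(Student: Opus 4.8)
\emph{Proof strategy.} I would pass, in both directions, through subsequences of translates of a single function $f\in\cC(X)$ that are equivalent to the unit vector basis of $\ell^1$, using the Rosenthal--Bourgain--Fremlin--Talagrand theory. The implication ``independence pair $\Rightarrow$ non-tame'' is elementary; the converse relies on the functional-analytic characterisations of tameness.

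\emph{Independence pair $\Rightarrow$ non-tame.} Let $(U_0,U_1)$ be an independence pair with infinite witness set, which we may take to be $S=\{t_k:k\in\N\}$, and (Urysohn) fix $f\in\cC(X)$ with $0\le f\le 1$, $f\equiv 0$ on $U_0$ and $f\equiv 1$ on $U_1$. By hypothesis, for every $a\in\{0,1\}^{\N}$ there is $\xi_a\in X$ with $t_k\xi_a\in U_{a_k}$, hence $f(t_k\xi_a)=a_k$ for all $k$. Let $\beta\mathbb N$ denote the space of ultrafilters on $\mathbb N$ and define $\Phi\colon\beta\mathbb N\to E(X)$ by $\Phi(\cU)=p_\cU:=\lim_{k\to\cU}\bigl(x\mapsto t_kx\bigr)$, the pointwise limit in $X^X$; since $E(X)$ is closed in $X^X$ and contains each map $x\mapsto t_kx$, this is well defined. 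If $\cU\neq\cU'$, choose $A\ssq\mathbb N$ with $A\in\cU$ and $\mathbb N\smin A\in\cU'$, and put $a=\ind_A$; then $f\bigl(p_\cU(\xi_a)\bigr)=\lim_{k\to\cU}f(t_k\xi_a)=\lim_{k\to\cU}\ind_A(k)=1$, while the same computation along $\cU'$ gives $f(p_{\cU'}(\xi_a))=0$, so $p_\cU\neq p_{\cU'}$. Thus $\Phi$ is injective and $|E(X)|\ge|\beta\mathbb N|=2^{2^{\aleph_0}}>2^{\aleph_0}$, i.e.\ $(X,T)$ is non-tame.

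\emph{Non-tame $\Rightarrow$ independence pair.} Assume $(X,T)$ is non-tame. Here I would invoke the chain of equivalences of K\"ohler, Glasner and Glasner--Megrelishvili (based on the Bourgain--Fremlin--Talagrand theory of Rosenthal compacta, Rosenthal's $\ell^1$-theorem and Grothendieck's double-limit criterion): $(X,T)$ is tame $\iff$ every element of $E(X)$ is a Baire-class-$1$ self-map of $X$ $\iff$ $E(X)$ is a Rosenthal compactum $\iff$ for every $g\in\cC(X)$ the orbit $\{t\cdot g:t\in T\}\ssq\cC(X)$, with $(t\cdot g)(x)=g(tx)$, contains no subsequence equivalent to the $\ell^1$-basis; the implication ``Rosenthal compactum $\Rightarrow|E(X)|\le 2^{\aleph_0}$'' is the BFT cardinality bound. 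Non-tameness therefore yields $f\in\cC(X)$ and $(t_k)_{k}$ in $T$ such that $(t_k\cdot f)_{k}$ is equivalent to the $\ell^1$-basis. By Rosenthal's combinatorial description of such sequences, after passing to a subsequence --- which we now call $S=\{t_k\}$ --- there are reals $a<b$ such that for all disjoint finite $P,Q\ssq S$,
\[
 \bigcap_{t\in P}\{x\in X: f(tx)<a\}\ \cap\ \bigcap_{t\in Q}\{x\in X: f(tx)>b\}\ \neq\ \emptyset .
\]
Set $U_0:=\{x\in X: f(x)\le a\}$ and $U_1:=\{x\in X: f(x)\ge b\}$; these are closed and, since $a<b$, disjoint. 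Given $c\in\{0,1\}^S$, put $P=\{t\in S:c_t=0\}$ and $Q=\{t\in S: c_t=1\}$. Every finite sub-intersection of $\bigcap_{t\in P}\{x: tx\in U_0\}\cap\bigcap_{t\in Q}\{x: tx\in U_1\}$ contains the corresponding set displayed above and is hence non-empty; by compactness of $X$ and closedness of the $U_i$ the whole intersection is non-empty, and any point in it witnesses $c$. So $(U_0,U_1)$ is an independence pair.

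\emph{Main obstacle.} The forward implication and the passage from an $\ell^1$-sequence to an independence pair (Rosenthal's combinatorial lemma together with the compactness argument) are routine. Everything of substance is packed into the equivalences used in the converse: that a pointwise-bounded subset of $\cC(X)$ with no $\ell^1$-subsequence has pointwise closure consisting of Baire-class-$1$ functions, that this forces $E(X)$ to be a Rosenthal compactum, and that (separable) Rosenthal compacta have cardinality at most $2^{\aleph_0}$. These are exactly the Bourgain--Fremlin--Talagrand / Rosenthal / Grothendieck results, and it is this functional-analytic package --- not any dynamical argument --- that one would need to have in hand.
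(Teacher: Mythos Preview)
The paper does not prove this statement at all: it is quoted verbatim as \cite[Proposition~6.4]{KerrLi2007Independence} and then simply \emph{used} (in Corollaries~\ref{c.nontameness_subshifts} and~\ref{c.nontameness_modelsets} and in the proof of Theorem~\ref{t.tame_implies_regular_for_group_actions}). So there is nothing to compare against on the paper's side.

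That said, your outline is essentially the standard argument --- and, in substance, the one Kerr and Li give. The forward direction (injecting $\beta\N$ into $E(X)$ via ultrafilter limits of the translates $x\mapsto t_kx$, distinguishing ultrafilters by evaluating $f$ at the witness points $\xi_a$) is clean and correct. For the converse you correctly identify that the real content is the K\"ohler/Glasner/Glasner--Megrelishvili package built on Rosenthal's $\ell^1$-theorem and the Bourgain--Fremlin--Talagrand theory: non-tameness $\Rightarrow$ some $f\in\cC(X)$ has an $\ell^1$-sequence of translates $\Rightarrow$ (Rosenthal's combinatorial lemma) a subsequence with the ``$a<b$'' independence property $\Rightarrow$ (compactness and closedness of $U_0=\{f\le a\}$, $U_1=\{f\ge b\}$) an independence pair. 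Two cosmetic points: write ``no sequence from the orbit is equivalent to the $\ell^1$-basis'' rather than ``no subsequence'', since the orbit is a set; and note that the metrisability of $X$ (assumed throughout the paper) is what makes the BFT cardinality bound and the separability of the relevant function families available. With those caveats, your sketch is sound, and your ``main obstacle'' paragraph accurately locates where the depth lies.
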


 In the case of symbolic dynamics (with the left shift denoted by $\sigma:\{0,1\}^\Z\to\{0,1\}^\Z$), there is the following immediate
 \begin{cor} \label{c.nontameness_subshifts}
   Suppose $\Sigma\ssq\{0,1\}^\Z$ is a subshift (that is, closed and
   shift-invariant) and there exists an infinite set $S\ssq \Z$ such that for
   every $a\in\{0,1\}^S$ there is some $\xi\in\Sigma$ with $\xi_s=a_s$ for all
   $s\in S$. Then $(\Sigma,\sigma)$ is non-tame.
 \end{cor}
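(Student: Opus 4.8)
The plan is to deduce Corollary~\ref{c.nontameness_subshifts} directly from Theorem~\ref{t.independence_pair}, by exhibiting a suitable independence pair inside the full shift $\{0,1\}^\Z$ and checking that it actually witnesses non-tameness for the subshift $(\Sigma,\sigma)$ itself. First I would fix the natural candidate for the independence pair: the two cylinder sets
\[
U_0 \ = \ \{\xi\in\{0,1\}^\Z\mid \xi_0=0\}, \qquad U_1 \ = \ \{\xi\in\{0,1\}^\Z\mid \xi_0=1\},
\]
intersected with $\Sigma$, i.e. $U_i' = U_i\cap\Sigma$. These are closed (cylinders are clopen in the product topology, and $\Sigma$ is closed), disjoint, and cover $\Sigma$. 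So the only thing left to verify is the defining independence property: there is an infinite $S\ssq\Z=T$ such that for every $a\in\{0,1\}^S$ there is $\xi\in\Sigma$ with $s\xi\in U_{a_s}'$ for all $s\in S$.

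The second step is to translate the hypothesis of the corollary into exactly this statement. By assumption there is an infinite set $S\ssq\Z$ such that for each $a\in\{0,1\}^S$ there exists $\xi\in\Sigma$ with $\xi_s = a_s$ for all $s\in S$. Now recall that in symbolic dynamics the action of $s\in\Z$ on a point $\xi$ is the shift, and with the usual convention $(\sigma^s\xi)_k = \xi_{k+s}$ one has $(s\xi)_0 = (\sigma^s\xi)_0 = \xi_s$. Hence $\xi_s = a_s$ says precisely that $(s\xi)_0 = a_s$, i.e. $s\xi\in U_{a_s}'$. Therefore the very same infinite set $S$ and the very same points $\xi\in\Sigma$ furnished by the hypothesis show that $(U_0',U_1')$ is an independence pair for $(\Sigma,\sigma)$. (If the paper's shift convention is $(\sigma\xi)_k=\xi_{k-1}$ instead, one simply replaces $S$ by $-S$, which is still infinite, and the same argument goes through; I would phrase it so as to be convention-agnostic.)

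Finally, invoking Theorem~\ref{t.independence_pair} with $(X,T)=(\Sigma,\sigma)$ and the independence pair just constructed yields that $(\Sigma,\sigma)$ is non-tame, which is the assertion. There is essentially no obstacle here: the statement is a direct specialisation of the independence-pair characterisation of non-tameness to the case where the disjoint closed sets are the two defining $0$/$1$ cylinders, and the mild bookkeeping point one has to be slightly careful about is only the identification of the $\Z$-action with the coordinate shift (i.e. that "$\xi_s=a_s$ for all $s\in S$'' is literally "$s\xi\in U_{a_s}$ for all $s\in S$''), together with noting that $S$ being infinite is preserved under $s\mapsto -s$ should the shift convention force that sign change.
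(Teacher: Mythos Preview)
Your argument is correct and is essentially identical to the paper's own proof, which also takes $U_0=[0]$ and $U_1=[1]$ as the independence pair and invokes Theorem~\ref{t.independence_pair}. Your version is in fact slightly more careful in intersecting the cylinders with $\Sigma$ and in addressing the shift convention, but the underlying idea is the same.
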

 \proof Let $U_0=[0]$ and $U_1=[1]$ be the cylinder sets of length one (at position $0$) in
 $\{0,1\}^\Z$. 
 By the assumptions, $(U_0,U_1)$ forms an independence pair.\qed\medskip

 A very similar statement holds in the case of model sets (see Corollary~\ref{c.nontameness_modelsets} below).

\subsection{Topological entropy}
 \label{subsec:topologicalEntropy}
In the following, $T$ denotes a non-compact, locally compact second countable abelian group with Haar measure $\Theta_T$.
Let $(A_n)_{n\in\N}$ be a van Hove sequence in $T$, that is, $(A_n)_{n\in\N}$ is an exhausting sequence of
 relatively compact subsets of $T$ such that $\overline{A_n} \ssq A_{n+1}$ and for every
 compact $K \ssq T$ we have
 $$ \nLim \frac{1}{\Theta_T(A_n)} \Theta_T(\partial^K (A_n)) = 0,$$ where
 $\partial^K(A_n) = ((K+A_n) \setminus \interior(A_n)) \cup ( (\overline{T
 \setminus A_n}+K) \cap A_n)$ is the {\em $K$-boundary of $A_n$}.\footnote{Observe that every van Hove
 sequence is also a \emph{F\o lner sequence} (as defined in \cite{Lindenstrauss2001}, for example).}
 We say the van Hove sequence is {\em tempered} if there exists $C \geq 1$ such that for all $n \in \N$ the estimate 
 $\Theta_T \left( \bigcup_{k=1}^{n-1} A_k^{-1}A_n \right) \leq C\Theta_T(A_n)$ holds. 
 It is worth mentioning that every van Hove sequence admits a 
 tempered subsequence (see \cite[Proposition~1.4]{Lindenstrauss2001}).

 In the following definitions, we keep the dependence on $(A_n)_{n\in\N}$
 implicit.  Given a topological dynamical system $(X,T,\phi)$, with $(X,d)$ a
 compact metric space, and $C\ssq X$, we say that a set $S \ssq X$ is {\em
   $(\epsilon,n)$-spanning for $C$} if for every $\zeta \in C$
 there is some $\xi \in S$ such that
  $$\max_{s \in A_n} d(\phi(s,\zeta), \phi(s,\xi)) < \epsilon.$$ 
  We denote the minimal cardinality of a set which
 $(\epsilon,n)$-spans $C$ by $S^C(\phi,\epsilon,n)$.  The {\em topological
   entropy of $\phi$ on $C$} is defined as
  $$ \htop^C(\phi) = \lim_{\epsilon \rightarrow 0} h^C_\epsilon(\phi),$$
 where
  $$ h^C_\epsilon(\phi) = \limsup_{n \rightarrow \infty} \frac{1}{\Theta_T(A_n)}
 \log S^C(\phi,\epsilon,n).$$ We set $\htop(\phi) =
 \htop^X(\phi)$.  Suppose $\psi$ is another continuous $T$-action on some
 compact metric space $H$ which is a factor of $(X,\phi)$ with a factor map $\beta$.
 Then $\htop(\phi) \geq
 \htop(\psi)$. For $\xi \in H$, we let $\htop^\xi(\phi) =
 \htop^{\beta^{-1}(\xi)}(\phi)$. Clearly, we obtain $\htop^\xi(\phi) \leq
 \htop(\phi)$ for any $\xi \in H$.  In case of $T = \R$ we have
 \begin{thm}[{\cite[Theorem 17]{bowen:EntropyForGroupEndomorphismsAndHomogeneousSpaces}}]
  \label{thm:entropyBowen}
  If $\phi$ is an $\R$-action, then $\htop(\phi) \leq \htop(\psi) + \sup_{\xi
    \in H} \htop^\xi(\phi).$
 \end{thm}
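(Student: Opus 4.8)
The plan is to reprove Bowen's inequality by estimating $(\epsilon,n)$-spanning numbers of $X$ directly, combining a spanning set for the base $H$ with spanning sets for the fibres of $\beta$. Write $h:=\htop(\psi)$, $a:=\sup_{\xi\in H}\htop^\xi(\phi)$ and $\theta_n:=\Theta_T(A_n)$, and fix the van Hove sequence $(A_n)_{n\in\N}$ throughout. Since $h^X_\epsilon(\phi)$ is non-increasing in $\epsilon$ and $\htop(\phi)=\lim_{\epsilon\to 0}h^X_\epsilon(\phi)$, it suffices to produce, for every $\epsilon>0$, a bound $h^X_\epsilon(\phi)\le h+a+\rho(\epsilon)$ with $\rho(\epsilon)\to 0$ as $\epsilon\to 0$; letting $\epsilon\to 0$ then yields the claim.

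Next I would fix $\epsilon>0$ and a parameter $\delta\in(0,\epsilon)$ (to be chosen), and for each $n$ pick a minimal $(\delta,n)$-spanning set $G_n\ssq H$ for $(H,\psi)$, so that $\limsup_n\tfrac1{\theta_n}\log|G_n|\le h^H_\delta(\psi)\le h$. For $z\in G_n$ consider the tube $Y_z^n:=\beta^{-1}\bigl(\{\eta\in H:\max_{s\in A_n}d_H(\psi(s,z),\psi(s,\eta))<\delta\}\bigr)\ssq X$. By equivariance and surjectivity of $\beta$ together with the spanning property of $G_n$, the tubes $Y_z^n$ ($z\in G_n$) cover $X$, whence
\[
S^X(\phi,\epsilon,n)\ \le\ |G_n|\cdot\max_{z\in G_n}S^{Y_z^n}(\phi,\epsilon,n).
\]
Taking $\tfrac1{\theta_n}\log(\cdot)$ and $\limsup_n$, the theorem reduces to the estimate
\[
\limsup_{n\to\infty}\tfrac1{\theta_n}\log\max_{z\in H}S^{Y_z^n}(\phi,\epsilon,n)\ \le\ a+\rho(\epsilon)
\]
for an appropriate choice of $\delta=\delta(\epsilon)$.

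This last estimate is the heart of the matter, and I expect it to be the main obstacle. For fixed $\delta$ the tube $Y_z^n$ decreases to the fibre $\beta^{-1}(z)$ as $\delta\to0$, and one would like to $(\epsilon,n)$-span $Y_z^n$ with not many more points than are needed to $(\tfrac\epsilon2,n)$-span $\beta^{-1}(z)$, uniformly over $z\in H$. The delicate point is an exchange of a supremum over fibres with the $\limsup$ in $n$: the quantity $a$ is built as $\sup_\xi\limsup_n\tfrac1{\theta_n}\log S^{\beta^{-1}(\xi)}(\phi,\epsilon,n)$, whereas the displayed estimate requires control of $\limsup_n\sup_\xi(\cdots)$. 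The plan to handle this is the compactness argument underlying Bowen's proof: for each fixed $n$ the function $\xi\mapsto S^{\beta^{-1}(\xi)}(\phi,\epsilon,n)$ is upper semicontinuous (a fixed $(\epsilon,n)$-spanning set of one fibre still $(\epsilon,n)$-spans all sufficiently close fibres, since $\beta$ is a closed map on a compact space), and, combining this with the uniform continuity of $\beta$ and of the time-$s$ maps for $s$ ranging over the compact window $A_n$, one covers $H$ by finitely many open sets on which the required $(\epsilon,n)$-spanning counts are uniform, and then reassembles. This is exactly the content of \cite[Theorem~17]{bowen:EntropyForGroupEndomorphismsAndHomogeneousSpaces}, and the hypothesis $T=\R$ enters only to keep these uniformity arguments over the windows $A_n$ elementary. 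A heavier alternative route would invoke the variational principle $\htop(\phi)=\sup_\mu h_\mu(\phi)$ together with the Ledrappier--Walters relative variational principle and the Abramov--Rokhlin formula $h_\mu(\phi)=h_{\beta_*\mu}(\psi)+h_\mu(\phi\mid\beta^{-1}\mathcal{B}_H)$, bounding the relative term by $\sup_\xi\htop^\xi(\phi)$.
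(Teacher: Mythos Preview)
The paper does not provide its own proof of this theorem; it is quoted directly from Bowen's paper \cite[Theorem~17]{bowen:EntropyForGroupEndomorphismsAndHomogeneousSpaces} as an external result, so there is nothing in the present paper to compare your argument against.

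Your outline follows Bowen's original strategy faithfully: split $S^X(\phi,\epsilon,n)$ into a base contribution $|G_n|$ and a fibrewise (tube) contribution $\max_z S^{Y_z^n}(\phi,\epsilon,n)$, then control the latter by the fibre spanning numbers. You correctly identify the genuine difficulty, namely that one needs $\limsup_n\sup_\xi(\cdots)$ while the hypothesis only gives $\sup_\xi\limsup_n(\cdots)$, and that this is bridged via upper semicontinuity of $\xi\mapsto S^{\beta^{-1}(\xi)}(\phi,\epsilon,n)$ together with compactness of $H$. However, your proposal does not actually carry out this step; it names the mechanism and then defers back to Bowen, so as written it is a correct \emph{plan} rather than a proof. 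If the intent is to reprove the result, you would need to make explicit the covering argument: for each $\xi$ and $n$, an $(\epsilon/2,n)$-spanning set for $\beta^{-1}(\xi)$ also $(\epsilon,n)$-spans $\beta^{-1}(U)$ for a neighbourhood $U$ of $\xi$ (since $\beta$ is closed), and a finite subcover then uniformises the constants. One must also check that the resulting $\delta$ can be taken independent of $n$, which is where the specific structure of the time windows enters. Your alternative route via the variational principle and the relative Abramov--Rokhlin formula is viable but, as you say, heavier than what is needed here.
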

 If $\htop(\psi) = 0$, the preceding inequalities yield
 $\htop(\phi) = \sup_{\xi \in H} \htop^\xi(\phi)$, that is, positive entropy of $\phi$
 must be realised in single fibres of $\beta$ already.  Note also that, in the
 Euclidean case, vanishing entropy with respect to one van Hove sequence implies
 vanishing entropy with respect to all van Hove sequences (compare
 \cite{baake2007purePointDiffraction}).

\begin{rem}
Extensions of Theorem~\ref{thm:entropyBowen} to $\R^N$-actions and actions of more general groups will be provided in
 \cite{hauser}.
 Even for higher dimensional Euclidean CPS (see Section~\ref{subsec:torusParametrisation} for the definitions), we can therefore
 restrict to showing zero topological fibre entropy, in order to the reduce the technicalities (see Section~\ref{sec: higher dimensional CPS and zero entropy}). 
\end{rem}
 
\subsection{Delone dynamical systems}\label{sec: delone dynamical systems}

Let $G$ be a non-compact, locally compact second countable abelian group with Haar measure
$\Theta_G$. 
Note that, by the Birkhoff-Kakutani Theorem, $G$ is metrisable with a metric $d$ which can be chosen to be
invariant under translations on the group. Furthermore, open balls with respect $d$ 
are relatively compact.  A set $\Gamma \ssq G$ is called {\em ($r$)-uniformly discrete} if there exists $r>0$ such that $d(g,h)>r$ for all $g\neq
h \in\Gamma$. Further, $\Gamma$ is called {\em ($R$)-relatively dense} (or {\em
  syndetic}) if there exists $R>0$ such that $\Gamma\cap B_R(g)\neq \emptyset$
for all $g \in G$, where $B_R(g)$ denotes the $R$-ball centred at $g$. 
We call $\Gamma$ a {\em Delone set} if it is uniformly discrete and relatively dense.
  
Given $\rho>0$ and $g\in\Gamma$, the tuple $(B_\rho(0)\cap (\Gamma-g),\rho)$ is
called a ($\rho$-)patch of $\Gamma$. 
The set of all patches of $\Gamma$ is denoted by
$\cP(\Gamma)$.
A Delone set $\Gamma$ is said to have {\em finite local complexity} (FLC) if for
all $\rho>0$ the number of $\rho$-patches that occur is finite.  Let $\cD(G)$
denote the space of Delone subsets of a given metrisable group $G$.  
Given
$\Gamma, \Gamma' \in \cD(G)$, set
  $$ \dist(\Gamma, \Gamma') = \inf \left\{ \varepsilon > 0 \mid \,\exists \delta \in
 B_\varepsilon(0) \: \Gamma - \delta \cap B_{1/\varepsilon}(0) = \Gamma' \cap
 B_{1/\varepsilon}(0) \right\}. $$ Then $d(\Gamma, \Gamma') = \min \{ 1/\sqrt{2},
 \dist(\Gamma,\Gamma')\}$ defines a metric on $\cD(G)$
 (see \cite[Section~2]{LeeMoodySolomyak2002} as well as \cite[Remark~2.10~(ii)]{MullerRichard2013}).

If $\cD_{r,R}(G)\ssq \cD(G)$ denotes the subset of Delone sets that are
$r$-uniformly discrete and $R$-relatively dense with fixed $r,R>0$, then
$(\cD_{r,R}(G),d)$ is a compact metric space. 
Clearly, $\cD_{r,R}(G)$ is
invariant under the translation action $ \varphi: (g,\Gamma) \mapsto \Gamma-g$.
It follows that for any Delone set $\Gamma\ssq G$ with FLC, the dynamical hull 
$\Omega(\Gamma) = \closure\left(\left\{\Gamma-g\mid g \in G\right\}\right)$ is
compact (\cite[Proposition~2.3]{Schlottmann1999GeneralizedModelSets}). The $G$-action $(\Omega(\Gamma),G)$ given by the translation $\varphi$ is
called a {\em Delone dynamical system}. 
Finally, we will need a criterion for non-tameness analogous to
Corollary~\ref{c.nontameness_subshifts}. Suppose that $\Omega$ is a
translation-invariant subset of $\cD(G)$. We say $\Omega$ {\em admits an
  (infinite) free set} or {\em (infinite) independence set} $S\ssq G$ if there exists a
uniformly discrete set $\Lambda\ssq G$ with $S\ssq \Lambda$ such
that for all $P\ssq S$ there exists some $\Gamma\in \Omega$ with $\Gamma\ssq
\Lambda$ and $\Gamma\cap S=P$.

 \begin{cor} \label{c.nontameness_modelsets}
   Suppose $G$ is a locally compact second countable group, $\Omega\ssq \cD(G)$
   is compact and translation-invariant and $(\Omega,G)$ denotes the action of
   $G$ on $\Omega$ by translations. If $\Omega$ admits an infinite free set, then
   $(\Omega,G)$ is non-tame.
 \end{cor}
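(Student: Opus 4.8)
The plan is to produce an explicit independence pair in the sense of Theorem~\ref{t.independence_pair} and deduce non-tameness from it, in exact analogy with Corollary~\ref{c.nontameness_subshifts}. Let $S\ssq G$ be the given infinite free set, witnessed by a uniformly discrete set $\Lambda\supseteq S$, and fix $r>0$ such that $\Lambda$ is $r$-uniformly discrete. Choosing any $0<\rho<r$, I would set
\[
 U_1 \ = \ \{\Gamma\in\Omega \mid 0\in\Gamma\}
 \qquad\text{and}\qquad
 U_0 \ = \ \{\Gamma\in\Omega \mid \Gamma\cap B_\rho(0)=\emptyset\}.
\]
These are disjoint since $0\in B_\rho(0)$. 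The first thing to check is that both are closed in the local topology on $\cD(G)$: the condition ``$\Gamma$ avoids the fixed open ball $B_\rho(0)$'' is open (if some point of $\Gamma$ lies in $B_\rho(0)$, a sufficiently small perturbation of $\Gamma$ still has a point there), so $U_0$ is closed; and if $\Gamma_n\to\Gamma$ with $0\in\Gamma_n$ for all $n$, then the vanishing translations appearing in the definition of the metric on $\cD(G)$ force a point of $\Gamma$ arbitrarily close to $0$, which by uniform discreteness of $\Gamma$ must equal $0$, so $U_1$ is closed as well. (Alternatively, $U_1$ may be replaced by $\{\Gamma\in\Omega\mid \Gamma\cap\overline{B_{\rho'}(0)}\neq\emptyset\}$ for an arbitrary $0<\rho'<\rho$, whose closedness is immediate from the fact that avoiding a fixed compact set is an open condition; the argument below is unchanged.)

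It then remains to verify the defining property of an independence pair with the infinite index set $S$. Given $a\in\{0,1\}^S$, put $P=\{t\in S\mid a_t=1\}\ssq S$. By the free-set property there is some $\xi=\Gamma\in\Omega$ with $\Gamma\ssq\Lambda$ and $\Gamma\cap S=P$. For $t\in S$ with $a_t=1$ we have $t\in P\ssq\Gamma$, hence $0\in\Gamma-t=t\xi$ and therefore $t\xi\in U_1$. For $t\in S$ with $a_t=0$ we have $t\notin\Gamma$; since $\Gamma\ssq\Lambda$, $t\in\Lambda$, and $\Lambda$ is $r$-uniformly discrete with $\rho<r$, the only point of $\Lambda$ (hence of $\Gamma$) that could lie in $B_\rho(t)$ is $t$ itself, which is excluded. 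Thus $\Gamma\cap B_\rho(t)=\emptyset$, equivalently $(\Gamma-t)\cap B_\rho(0)=\emptyset$, so that $t\xi\in U_0$. Hence for every $a\in\{0,1\}^S$ the point $\xi$ satisfies $t\xi\in U_{a_t}$ for all $t\in S$, so $(U_0,U_1)$ is an independence pair and $(\Omega,G)$ is non-tame by Theorem~\ref{t.independence_pair}.

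As for difficulty, there is no real obstacle: the statement is a direct transcription of Corollary~\ref{c.nontameness_subshifts} to the Delone setting. The only mildly delicate point is the topological verification that $U_0$ and $U_1$ are closed, which relies on the precise form of the metric on $\cD(G)$ together with the uniform discreteness of its members; everything else is a purely combinatorial unwinding of the definition of a free set.
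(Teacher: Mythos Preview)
Your proof is correct and follows essentially the same approach as the paper: the paper also exhibits an explicit independence pair of the form $U_0=\{\Gamma\in\Omega\mid \Gamma\cap B_r(0)=\emptyset\}$ and $U_1=\{\Gamma\in\Omega\mid \Gamma\cap\overline{B_{r/2}(0)}\neq\emptyset\}$, which is just your alternative choice of $U_1$ with specific radii. Your version is slightly more detailed in that you spell out the closedness verifications and the combinatorial check of independence, whereas the paper leaves these to the reader.
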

\begin{proof} 
Suppose $S\ssq\Lambda$ are as above and $\Lambda$ is $r$-uniformly discrete for some $r>0$. 
Set $U_0=\{\Gamma\in\Omega\mid \Gamma\cap
B_r(0) =\emptyset\}$ and $U_1=\{\Gamma\in\Omega\mid \Gamma \cap
\overline{B_{r/2}(0)}\neq \emptyset\}$.
By the assumptions, $U_0$ and $U_1$ form an independence pair.
\end{proof}

 \subsection{Cut and project schemes and the torus parametrisation}
 \label{subsec:torusParametrisation}

 We refer to standard references such as
 \cite{Meyer1972AlgebraicNumbers,Schlottmann1999GeneralizedModelSets,Moody2000ModelSetsSurvey,BaakeLenzMoody2007Characterization,BaakeGrimm2013AperiodicOrder,lee2006characterization}
 for the following basic facts.  A {\em cut and project scheme} (CPS) consists
 of a triple $(G,H,\cL)$ of two locally compact abelian groups $G$ (called {\em
   external group}) and $H$ ({\em internal group}) and a co-compact discrete
 subgroup $\cL \ssq G \times H$ such that the natural projections $\pi_G : G
 \times H \to G$ and $\pi_H : G \times H \to H$ satisfy
 \begin{itemize}
  \item[(i)] the restriction $\pi_G \vert_\cL$ is injective;
  \item[(ii)] the image $\pi_H(\cL)$ is dense.
 \end{itemize}
 If (i) and (ii) hold, we call $\cL$ an {\em irrational lattice}.  As a
 consequence of (i), if we let $L = \pi_G(\cL)$ and $L^* = \pi_H(\cL)$, the
 {\em star map}
  $$ * : L \rightarrow L^* : l \mapsto l^* =
 \pi_H\circ \left.\pi_{G}\right|_\cL^{-1}(l) $$ is well-defined and
 surjective. 
 Given a compact set $W \ssq H$ (referred to as {\em window}), we define the point set
 \begin{equation*} 
  \oplam(W) \ = \ \pi_G\left(\cL \cap (G\times W)\right) \ = \ \{l\in L\mid
  l^*\in W\}.
 \end{equation*}
 Since $W$ is compact, $\oplam(W)$ is uniformly discrete and has FLC, and if
 $W$ has non-empty interior, then $\oplam(W)$ is relatively dense. Hence, if $W$
 is {\em proper}, $\oplam(W)$ is Delone and has FLC.  In this case, we call
 $\oplam(W)$ a {\em model set}.  The window (and also the resulting model set)
 is called {\em regular} if $\Theta_H(\partial W)=0$, otherwise it is called
 {\em irregular}.  We say a subset $W \ssq H$ is {\em irredundant}\footnote{This
   notion corresponds to that of {\em separating covers} in the context of
   almost automorphic symbolic dynamical systems \cite{Paul1976} or, in the non-symbolic case, to the
   concept of \emph{being invariant under no rotation} in the setting of
   semi-cocycle extensions (see, for example, \cite[Section~5]{DownarowiczDurand2002}).}  if $ \{ h
 \in H \mid h+W=W \} = \{0\}$.  Note that if $\partial W$ is irredundant, then
 $W$ is irredundant, too.  A CPS is called {\em Euclidean} if $G =\R^N$ and $H =
 \R^M$ for some $M,N\in\N$, and {\em planar} if $N=M=1$.
 
 Since $\cL$ is a lattice in $G \times H$, the quotient $\T = (G \times H) /
 \cL$ is a compact abelian group. A natural $G$-action on $\T$ is given by
 $\omega : (u,[s,t]_\cL) \mapsto [s+u,t]_\cL$.  Here, $[s,t]_\cL$ denotes the
 equivalence class of $(s,t) \in G \times H$ in $\T$.  Observe that due to the
 assumptions on $(G,H,\cL)$, this action is minimal.  Further, if the window $W
 \ssq H$ is irredundant, $(\T,G)$ is the maximal equicontinuous factor of the
 Delone dynamical system $(\Omega(\oplam(W)),G)$.  The respective factor map
 $\beta$ is also referred to as {\em torus parametrisation}.
  
 Given an irredundant window $W$, the fibres of the torus parametrisation are
 characterised as follows: For $\Gamma \in \Omega(\oplam(W))$, we have
 \begin{equation}
  \label{eq:flowMorphism}
  \Gamma\in\beta^{-1}([s,t]_\cL) \quad \equi \quad \oplam(\inte(W)+t)-s \ssq
  \ \Gamma \ \ssq \oplam(W+t)-s
 \end{equation} 
 as well as
 \begin{align}
  \begin{split}
  \label{eq:equivalenceForFibre}
  \Gamma \in \beta^{-1}([0,t]_\cL) \quad \equi \quad& \exists\, (t_j) \in
         {L^*}^\N \text{ with } \jLim t_j = t \text{ and } \jLim \oplam(W+t_j) =
         \Gamma.
  \end{split}
 \end{align} 
 In particular, if $(\partial W + t) \cap L^* = \emptyset$, we have
 $\oplam(\inte(W)+t)-s = \oplam(W+t)-s$ so that $\beta^{-1}([s,t]_\cL)$ is a
 singleton for each $s$.  We denote the set of such $t\in H$ by $\mc G_W$, that is, 
\[
   \mc G_W \ = \ -\left(H\smin \bigcup_{\ell^*\in L^*} \partial W-\ell^*\right) \ . 
\] 
 Note that $\mc G_W$ is residual.  Hence, $(\Omega(\oplam(W+t)-s),G)$ is an
 almost automorphic system if $t\in \mc G_W$.  In this case, an (ir)regular
 model set $(\Omega(\oplam(W+t)-s),G)$ is an (ir)regular extension of $(\T,G)$.
 Thus, the present notion of (ir)regularity is consistent with that of
 Section~\ref{TopDyn}.
 
  \begin{rem}
  \label{remark: inclusion in fibres}
  Let $\Gamma, \Gamma' \in \beta^{-1}([s,t]_\cL)$ and $\varepsilon > 0$. Recall
  that $d(\Gamma,\Gamma') < \varepsilon$ if and only if $\Gamma$ and $\Gamma'$
  coincide on $B_{1/\varepsilon}(0)$ up to a small translation $\delta \in
  B_\varepsilon(0)$.  Observe that $\oplam(W+t)-s$ is $r$-uniformly discrete
  with $r=\min_{l \neq l' \in \oplam(W+t)} d_G(l,l')$.  Hence, if $\varepsilon <
  \frac{r}{2}$, equation (\ref{eq:flowMorphism}) immediately yields $\delta =
  0$, i.e., $d(\Gamma, \Gamma') < \varepsilon'$ if and only if $\Gamma \cap
  B_{1/\varepsilon'}(0) = \Gamma' \cap B_{1/\varepsilon'}(0)$.
 \end{rem}

 \begin{rem}
  Note that if $W$ is not irredundant, it is possible to construct a CPS $(G,H',\cL')$ with irredundant window 
  $W' \ssq H'$ such that for each $\Lambda \in \Omega(\oplam(W))$ with $\oplam(\interior(W)) \ssq \Lambda \ssq \oplam(W)$ we have $\oplam(\interior(W')) \ssq \Lambda \ssq \oplam(W')$
  (compare \cite[Section~5]{lee2006characterization} and \cite[Lemma~7]{BaakeLenzMoody2007Characterization}).
  Thus, in the following, we may assume that all occurring windows are irredundant.
 \end{rem}

 
 \section{Tame implies regular}
  \label{TameImpliesRegular}
  
 The aim of this section is to prove that tame implies regular for model sets,
 minimal subshifts and general minimal topological actions.  The result on subshifts
 will be obtained as a special case of the one for model sets, whereas the case
 of general group actions requires some modifications to adapt the proof to the
 more general setting. We should also point out that the information obtained
 for irregular model sets is slightly stronger than in the general case, since
 the existence of an infinite free set --as defined in Section~\ref{sec: delone dynamical systems}-- is stronger than the existence of an
 independence pair.

 \subsection{The case of model sets}

\begin{thm} \label{t.irregular_implies_free_set}
  Suppose that $(G,H,\cL)$ is a CPS with locally compact and second countable
  abelian groups $G,H$. Denote by $\Theta_H$ the Haar measure on $H$. If $W$ is
  a proper window with $\Theta_H(\partial W)>0$, or if $H=\kreis (=\R/\Z)$ and
  $\partial W$ is a Cantor set, then $\Omega(\oplam(W))$ admits an infinite free
  set $S\ssq G$.
\end{thm}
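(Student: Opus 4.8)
Write $A:=\partial W$. In both cases $A$ is closed, nonempty, and nowhere dense: since $W$ is proper, $\inte W$ is dense in $W$, so $W\smin\inte W$ has empty interior. Moreover $A$ is "large", either metrically ($\Theta_H(A)>0$) or topologically ($A$ a Cantor set, hence perfect). For $\xi\in L^*$ set $A_\xi^1:=\xi-\inte W$ and $A_\xi^0:=H\smin(\xi-W)$, both open; a parameter $t\in H$ lies in $A_\xi^1$ (resp.\ $A_\xi^0$) iff $\xi-t\in\inte W$ (resp.\ $\xi-t\notin W$).

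\emph{Step 1 (reduction).} First I would fix a relatively compact open set $V_0\ssq H$, put $K:=\overline{V_0}$ and $\Lambda:=\oplam(W+K)$; since $W+K$ is compact, $\Lambda$ is uniformly discrete. I claim it suffices to build a sequence $(\ell_n)_{n\in\N}$ in $L$ with pairwise distinct star images such that for every finite $F\ssq\N$ and every $Q\ssq F$ there is some $t\in V_0$ with $\ell_i^*-t\in\inte W$ for $i\in Q$ and $\ell_i^*-t\notin W$ for $i\in F\smin Q$. Indeed, for such $t$ the fibre $\beta^{-1}([0,t]_\cL)$ is nonempty (as $\beta$ is onto), and by \eqref{eq:flowMorphism} every $\Gamma$ in it satisfies $\oplam(\inte W+t)\ssq\Gamma\ssq\oplam(W+t)\ssq\Lambda$ and $\{i\in F:\ell_i\in\Gamma\}=Q$; also $S:=\{\ell_n\mid n\in\N\}\ssq\Lambda$ (use $F=Q=\{n\}$). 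Given an arbitrary $P\ssq S$, I would exhaust the index set of $P$ by finite sets $F_1\ssq F_2\ssq\cdots$, take the corresponding $\Gamma_j\in\Omega(\oplam(W))$ with $\Gamma_j\ssq\Lambda$ and $\Gamma_j\cap S=\{\ell_i\mid i\in F_j,\ \ell_i\in P\}$, pass to a limit $\Gamma$ in the compact hull, and verify $\Gamma\ssq\Lambda$ and $\Gamma\cap S=P$ by using uniform discreteness of $\Lambda$ together with Remark~\ref{remark: inclusion in fibres} to rule out vanishing translations on fixed balls. Then $S$ is an infinite free set, and non-tameness follows from Corollary~\ref{c.nontameness_modelsets}.

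\emph{Step 2 (the recursion).} I would construct the $\ell_n$ alongside a binary tree of nonempty open parameter regions $(V_w)_{w\in\{0,1\}^{<\omega}}$ with $V_\emptyset=V_0$, maintaining $V_w\ssq V_0\cap\bigcap_{i\le|w|}A^{w_i}_{\ell_i^*}$, so that any $t\in V_w$ witnesses the pattern $w$ for $\ell_1,\dots,\ell_{|w|}$. Passing from level $n$ to $n+1$ reduces to choosing $\ell_{n+1}^*\in L^*$ so that the shifted boundary $\ell_{n+1}^*-A$ meets \emph{every} region $V_w$ with $|w|=n$: for each such $w$ one then picks $p_w\in V_w\cap(\ell_{n+1}^*-A)$, and properness of $W$ (i.e.\ $A\ssq\overline{\inte W}\cap\overline{H\smin W}$) forces both $V_w\cap A^1_{\ell_{n+1}^*}$ and $V_w\cap A^0_{\ell_{n+1}^*}$ to be nonempty open, which one takes as (small, relatively compact) $V_{w1}$ and $V_{w0}$. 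The requirement on $\ell_{n+1}^*$ is that $\ell_{n+1}^*\in L^*\cap\bigcap_{|w|=n}(A+V_w)$, and since $L^*$ is dense it is enough to keep the open set $\bigcap_{|w|=n}(A+V_w)$ nonempty at every level.

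\emph{Step 3 (keeping the recursion alive — the main obstacle).} This last point is the technical heart of the argument, and the only step I expect to be genuinely difficult. The subtlety is that the $2^n$ regions $V_w$ sit inside the \emph{fixed} bounded set $V_0$ but are forced to shrink, so a bounded translate of $A$ can thread through all of them simultaneously only if one controls the sizes and positions of the $V_w$ against the local structure of $A$ at the corresponding (small) scales. The plan is to carry along a quantitative invariant on the regions that is anchored to the "largeness" of $A$: in the case $\Theta_H(A)>0$, anchor it at a Lebesgue density point $x^*$ of $A$ (a Vitali-type differentiation theorem being available on the lcsc abelian group $H$), arranging that at each level the whole configuration of regions can be moved by a single translation into a ball around $x^*$ on which $A$ is dense enough to meet all of them, and choosing the radii at the next level small enough that this density persists; in the Cantor case, anchor it instead at the perfectness of $A$, keeping each $V_w$ containing a sub-arc that meets $A$ and re-extracting such sub-arcs inside $V_{w0}$ and $V_{w1}$. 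Making this balancing of scales and positions work is exactly where the hypothesis that $W$ is irregular (metrically thick, resp.\ topologically perfect boundary) enters in an essential way, and carrying it out carefully is the bulk of the proof.
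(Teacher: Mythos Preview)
Your three-step architecture---reduce to a combinatorial condition on parameters $t\in H$, build a binary tree of open regions $V_w$ tracking patterns, and keep the intersection $\bigcap_{|w|=n}(A+V_w)$ nonempty---is exactly the paper's. Two differences are worth flagging.

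In Step~1 you work harder than necessary. Rather than choosing arbitrary $t\in V_0$, picking an element of the fibre $\beta^{-1}([0,t]_\cL)$, and then passing to a limit along finite approximations, the paper simply demands in addition that $-t\in\cG_W$ (the residual set where $(\partial W+t)\cap L^*=\emptyset$). Then $\oplam(W+t)$ itself lies in the hull and realises the prescribed pattern exactly; no compactness argument is needed. This extra constraint is absorbed into the recursion for free by intersecting each $V_w$ with the $n$-th open dense set in a $G_\delta$ presentation of $-\cG_W$.

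The substantive gap is Step~3. Your plan is to anchor at a Lebesgue density point $x^*$ of $A=\partial W$ and translate the whole configuration there. Two problems: first, a Vitali/Lebesgue differentiation theorem is not automatic on an arbitrary lcsc abelian group $H$; it typically needs a doubling hypothesis you do not have. Second, even when it holds, high density of $A$ in $B_\rho(x^*)$ does not force $A$ to meet each of the $2^n$ individually tiny translated regions unless you have already arranged their volumes to be at least $(1-\text{density})\cdot\Theta_H(B_\rho)$, which entangles the choice of radii at level $n$ with the (a~priori unknown) speed of convergence at $x^*$. The paper bypasses both issues with a purely global estimate using only \emph{outer regularity} of Haar measure, which \emph{is} automatic: maintaining the invariant that the centres $\gamma_a$ of the balls $U_a=V_w$ stay within $\delta_1^n=\sum_{j\le n}\eps_j$ of one another, an inductive inclusion--exclusion gives
\[
\Theta_H\Big(\bigcap_{a\in\Sigma_n} A\gamma_a^{-1}\Big)\ \ge\ \Theta_H(A)\Big(1-\sum_{j=1}^n 2^{j-1}\eta(\delta_j^n)\Big),\qquad \eta(\eps)=\frac{\Theta_H(B_\eps(A))}{\Theta_H(A)}-1\to 0.
\]
Choosing $(\eps_n)$ once and for all so that $\sum_j 2^{j-1}\eta(\delta_j^\infty)<1$ keeps this positive at every level, so one can pick $h$ in the intersection and then $\ell_{n+1}^*\in L^*$ close to $h^{-1}$. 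So the invariant to carry is not ``near a density point'' but ``centres $\delta_1^n$-clustered''; once you see this, Step~3 is short. For the Cantor case on $\kreis$, the paper's invariant is likewise sharper than ``each $V_w$ meets $A$'': one arranges $\partial U_a\ssq t_{n+1}^{-1}A$, so a further small rotation by $\Delta t_{n+2}$ sends an endpoint of each $U_a$ into its interior, producing the required intersection with the next translate of $A$ automatically.
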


The proof will be based on the following criterion for the existence of infinite
free sets, which translates the dynamical problem into a purely geometric
question about the structure of the window.

\begin{lem}\label{lem: free_set_criterion}
  Suppose that $(G,H,\cL)$ is a CPS that satisfies the above assumptions and
  there exists a relatively compact set $S^*\ssq L^*$ such that for each
  $P^*\ssq S^*$ we have
  \begin{equation} \label{e.window_translates_CPS}
     H(S^*,P^*) \ = \ \bigcap_{s^*\in P^*} (W-s^*)\ \cap\! \bigcap_{s^*\in S^*\smin P^*}\!(W^c-s^*)\
     \cap\ (-\cG_W) \ \neq \ \emptyset.
  \end{equation}
  Then the set $S=\{s\in G\mid s^*\in S^*\}$ is free.
\end{lem}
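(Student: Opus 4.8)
The plan is to translate the hypothesis~\eqref{e.window_translates_CPS} directly into the defining property of a free set from Section~\ref{sec: delone dynamical systems}, using the description of the fibres of the torus parametrisation in~\eqref{eq:flowMorphism}. First I would fix $\Lambda = \oplam(\overline{B_\rho(0)} + S^*)$ for a suitably large $\rho$; since $S^*$ is relatively compact and $\oplam(\cdot)$ of a compact set is uniformly discrete with FLC, $\Lambda$ is uniformly discrete, and by construction $S = \{s\in G\mid s^*\in S^*\}\ssq\Lambda$ (each $s\in S$ satisfies $s = \pi_G\circ(\pi_G|_\cL)^{-1}(s^*)$ and $s^*\in S^* = \oplam(\{s^*\})$-type condition, so $s\in\oplam(S^*)\ssq\Lambda$). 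The key point is that $S$ is itself uniformly discrete (as $S\ssq\Lambda$), and one must record the constant $r>0$ implicitly, though it is not needed explicitly here.

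Next, given an arbitrary $P\ssq S$, let $P^*=\{s^*\mid s\in P\}\ssq S^*$ and pick, using the hypothesis, some $t\in H(S^*,P^*)$. Then $t\in -\cG_W$, so $-t\in\cG_W$, which by the discussion after~\eqref{eq:equivalenceForFibre} means $(\partial W - t)\cap L^* = \emptyset$ and hence $\beta^{-1}([0,-t]_\cL)$ is a singleton, say $\{\Gamma\}$, with $\Gamma = \oplam(W+ t)$ (here I am using $\oplam(\inte(W)+t) = \oplam(W+t)$ on $\cG_W$, and a sign convention that I would fix carefully at the start). Now I claim $\Gamma\cap S = P$ and $\Gamma\ssq\Lambda$. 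For $s\in S$: $s\in\Gamma = \oplam(W+t)$ iff $s^* - t\in W$, i.e. $s^*\in W + t$, equivalently $t\in W - s^*$; by the definition of $H(S^*,P^*)$, for $s^*\in P^*$ we have $t\in W-s^*$ (so $s\in\Gamma$), and for $s^*\in S^*\smin P^*$ we have $t\in W^c - s^*$ (so $s\notin\Gamma$). Hence $\Gamma\cap S = P$. For the inclusion $\Gamma\ssq\Lambda$: every $\ell\in\Gamma$ has $\ell^*\in W+t$, and since $W$ is compact and $t$ ranges over the (relatively compact) set $-\cG_W\cap(\bigcap(W-s^*))$ which sits inside a bounded neighbourhood of $-W + $ (bounded), one gets $\ell^*$ in a fixed compact set, so $\ell\in\oplam(\text{that compact set})$; choosing $\rho$ large enough at the outset so that this compact set is contained in $\overline{B_\rho(0)}+S^*$ gives $\Gamma\ssq\Lambda$.

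Finally, assembling these facts: $\Lambda$ is uniformly discrete, $S\ssq\Lambda$, and for every $P\ssq S$ we produced $\Gamma\in\Omega(\oplam(W))$ with $\Gamma\ssq\Lambda$ and $\Gamma\cap S = P$ — which is exactly the definition of $S$ being a free (independence) set. The step I expect to be the main obstacle is making the inclusion $\Gamma\ssq\Lambda$ genuinely uniform in $P$: one must choose the single ambient set $\Lambda$ (equivalently the radius $\rho$) before knowing $P$, and verify that the compact set containing $W + t$ can be bounded independently of the choice of $t\in H(S^*,P^*)$ as $P^*$ varies over subsets of $S^*$. This follows because $t$ always lies in $-\cG_W$ intersected with sets of the form $W - s^*$ with $s^*\in S^*$ relatively compact, hence $t$ itself ranges in a fixed relatively compact subset of $H$, so $W + t$ ranges in a fixed compact set and $\ell^*\in W+t$ confines $\ell$ to $\oplam$ of that fixed compact set. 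A secondary point requiring care is the consistent handling of signs between the torus parametrisation $\beta^{-1}([s,t]_\cL)$ and the translate $\oplam(W+t)-s$, but this is bookkeeping rather than a real difficulty.
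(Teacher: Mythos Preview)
Your approach is essentially the paper's: for each $P\ssq S$ pick $h\in H(S^*,P^*)$, set $\Gamma=\oplam(W-h)\in\Omega(\oplam(W))$ (your sign slip $\oplam(W+t)$ versus $\oplam(W-t)$ is indeed just bookkeeping), verify $\Gamma\cap S=P$ via the equivalence $s\in\oplam(W-h)\Leftrightarrow h\in W-s^*$, and then argue that all such $\Gamma$ sit inside one fixed uniformly discrete $\Lambda$.

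There is, however, a real gap in your uniformity argument, precisely at the point you flagged as the main obstacle. You claim that ``$t$ always lies in $-\cG_W$ intersected with sets of the form $W-s^*$,'' hence in a fixed relatively compact set. But when $P^*=\emptyset$ there is \emph{no} constraint of the form $h\in W-s^*$; the set $H(S^*,\emptyset)=\bigcap_{s^*\in S^*}(W^c-s^*)\cap(-\cG_W)$ is an intersection of open complements and a residual set, which is unbounded in general. So your bound on $t$ fails exactly here, and your $\Lambda=\oplam(\overline{B_\rho(0)}+S^*)$ need not contain the corresponding $\Gamma$. The paper fixes this by treating $P^*=\emptyset$ separately: one simply chooses a \emph{single} point $h_0\in H(S^*,\emptyset)$ once and for all, and then takes $\Lambda=\oplam(W-V)$ with $V=\overline{W-S^*}\cup\{h_0\}$. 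For $P^*\neq\emptyset$ one has $H(S^*,P^*)\ssq W-S^*$ automatically, so $V$ is compact and $\Lambda$ is uniformly discrete. With this one-line patch your argument goes through and matches the paper's.
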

\proof Let $P\ssq S$ and choose $h\in H(S^*,P^*)$. 
Then $\oplam(W-h)\in \Omega(\oplam(W))$ since $-h\in \cG_W$ (see Section~\ref{subsec:torusParametrisation}). 
Further, for any $s\in S$, we have
\[
s\in P \ \equi \ h\in W-s^* \ \equi \ s^*\in W-h \ \equi \ s\in \oplam(W-h).
\]
Therefore, $\oplam(W-h)\cap S=P$. 
Note that if $P^*\neq \emptyset$, then $H(S^*,P^*)\ssq W-P^*\ssq W-S^*$.
Hence, we may assume without loss of generality that the points
$h$ from above belong to the compact set $V:=\overline{W-S^*}\cup \{h_0\} \ssq H$,
where $h_0$ is some point in $(W^c-S^*)\cap (-\cG_W)$.
Clearly, $\oplam(W-h)\ssq
\oplam(W-V)=:\Lambda$ for $h\in V$.
Since $W-V$ is a compact window, the set
$\Lambda$ is uniformly discrete. As $P\ssq S$ was arbitrary, we obtain that
$S\ssq \Lambda$ is a free set.\qed \medskip

Hence, in order to prove Theorem~\ref{t.irregular_implies_free_set}, it suffices
to prove the existence of an infinite set $S^*$ that satisfies the assumptions of the previous
lemma. 
With a view towards the later extension of the proof to topological group actions
in Section \ref{GroupActions}, we reformulate the required statement in a
slightly more abstract form.
%

\begin{prop} \label{p.window_translates}
  Suppose that $H$ is a locally compact second countable Hausdorff topological
  group with left Haar measure $\Theta_H$ and $V_0,V_1\ssq H$ are closed subsets that satisfy
  \begin{itemize}
  \item[(i)] $\overline{\inte(V_0)}=V_0$ and $\overline{\inte(V_1)}=V_1$,
  \item[(ii)] $\inte(V_0)\cap \inte(V_1)=\emptyset$,
  \item[(iii)] $\Theta_H(V_0\cap V_1)>0$.
  \end{itemize}
  Further, assume that $T\ssq H$ is a dense subgroup and $\cG\ssq H$ is a
  residual set.  Then there exists an infinite set $I\ssq T$ such that for all
  $a\in\{0,1\}^I$ there exists $h\in \cG$ with the property that
  \begin{equation}
    \label{e.existence_free_set}
       th \in \inte(V_{a_t}) \qquad (t\in I).
  \end{equation}
  The same result holds if $H=\kreis$ and (iii) is replaced by the assumption
  that $V_0\cap V_1$ is a Cantor set.
\end{prop}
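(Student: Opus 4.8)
The plan is to construct the set $I=\{t_1,t_2,\dots\}\ssq T$ recursively, building at stage $n$ a nested sequence of nonempty open sets $O_n\ssq H$ such that for every $a\in\{0,1\}^{\{t_1,\dots,t_n\}}$ the set $\{h\in O_n : t_ih\in\inte(V_{a_{t_i}})\ \text{for}\ i\le n\}$ has nonempty interior; then a Baire-category argument in the final step produces the required point $h\in\cG$. First I would record the key ``splitting'' mechanism: by (iii) the set $V_0\cap V_1$ has positive Haar measure, hence (using that $\Theta_H$ is left-invariant and the map $h\mapsto th$ is a homeomorphism preserving the null ideal) so does $t^{-1}(V_0\cap V_1)=t^{-1}V_0\cap t^{-1}V_1$ for every $t\in H$; combined with (i) this means $\inte(t^{-1}V_0)$ and $\inte(t^{-1}V_1)$ are two disjoint (by (ii)) open sets whose closures both meet any neighbourhood of a common density-type point of $V_0\cap V_1$. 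More precisely, I would first locate an open set $U$ on which $V_0\cap V_1$ is ``thick'': a point $x$ with $\Theta_H(B\cap V_0\cap V_1)>0$ for every neighbourhood $B$ of $x$ (such $x$ exists since $V_0\cap V_1$ has positive measure), so that every nonempty open $B\ssq U$ meets both $\inte(V_0)$ and $\inte(V_1)$ in a set of positive measure. By density of $T$ in $H$, such thickness can be transported: given any nonempty open $O\ssq H$, there is $t\in T$ with $tO\cap U\ne\emptyset$, hence $O\cap t^{-1}\inte(V_0)$ and $O\cap t^{-1}\inte(V_1)$ are both nonempty and open.

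With this mechanism in hand the recursion runs as follows. Start with $O_0=H$. At stage $n$, having $O_{n-1}$ nonempty open, choose (as above) $t_n\in T\smin\{t_1,\dots,t_{n-1}\}$ — the complement is still infinite since $T$ is infinite and we only need $tO_{n-1}\cap U\ne\emptyset$ for some new $t$, which holds because $\{t\in T:tO_{n-1}\cap U\ne\emptyset\}$ is a nonempty open-and-dense-flavoured condition, in any case cofinite avoidance is harmless — such that $O_{n-1}\cap t_n^{-1}\inte(V_0)$ and $O_{n-1}\cap t_n^{-1}\inte(V_1)$ are both nonempty. For each of the $2^n$ choices $a\in\{0,1\}^{\{t_1,\dots,t_n\}}$ we then have a nonempty open set
\[
  O_n^a \ = \ \bigcap_{i=1}^n \bigl(O_{i-1}\cap t_i^{-1}\inte(V_{a_{t_i}})\bigr)
\]
refining $O_{n-1}^{a|_{n-1}}$; shrinking slightly we may take $\overline{O_n^a}\ssq O_{n-1}^{a|_{n-1}}$ and $\diam(O_n^a)\to 0$ along branches, or simply keep the $O_n^a$ open and nonempty and invoke Baire at the end. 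Now fix any $a\in\{0,1\}^I$. The sets $(O_n^{a|_n})_{n\in\N}$ form a decreasing sequence of nonempty open subsets of $H$; since $H$ is locally compact, second countable and Hausdorff, it is a Baire space, and $\cG$ is residual, so $\cG\cap\bigcap_n \overline{O_n^{a|_n}}\ne\emptyset$ provided we arrange $\overline{O_{n+1}^{a|_{n+1}}}\ssq O_n^{a|_n}$ (a standard shrinking, using regularity of $H$) and that the intersection is nonempty — which follows by compactness of the closures $\overline{O_n^{a|_n}}$ after a further shrink inside a fixed relatively compact set, or directly from the Baire property applied to the residual set $\cG$ together with the $G_\delta$ set $\bigcap_n O_n^{a|_n}$ once we know the latter is nonempty. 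Any $h$ in this intersection satisfies $t_ih\in\inte(V_{a_{t_i}})$ for all $i$, which is \eqref{e.existence_free_set}.

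For the circle case $H=\kreis$ with $V_0\cap V_1$ a Cantor set $C$ and (iii) dropped, the only change is in the splitting mechanism: a Cantor set has empty interior, so instead of a density point I would use a point $x\in C$ that is not isolated from either side within $C$ together with the fact that $C\ssq \partial V_0\cup\partial V_1$ lies in the closures of both $\inte(V_0)$ and $\inte(V_1)$; since $C$ is perfect and $T$ is dense, for any nonempty open arc $O$ one finds $t\in T$ with $tO\cap C$ containing a further Cantor subset accumulating on points of $\inte(V_0)$ and of $\inte(V_1)$ from within $O$, giving again that $O\cap t^{-1}\inte(V_0)$ and $O\cap t^{-1}\inte(V_1)$ are both nonempty open. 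The recursion and Baire step are then verbatim. The main obstacle I anticipate is precisely this splitting lemma — making rigorous, in a general (non-Euclidean, possibly totally disconnected) locally compact group, that ``thickness'' of $V_0\cap V_1$ can be relocated by a dense subgroup element to refine an arbitrary open set into two disjoint pieces hitting $\inte(V_0)$ and $\inte(V_1)$ — since one cannot appeal to Lebesgue density in the usual form; the fix is to replace density points by the elementary statement that a positive-measure set meets every nonempty open subset of some fixed open set in positive measure (a ``point of positive upper density for the open topology''), which survives under the homeomorphisms $h\mapsto th$ and under passing to open subsets.
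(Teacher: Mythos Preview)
Your recursion has a genuine gap at the inductive step. After stage $n-1$ you have not one open set $O_{n-1}$ but $2^{n-1}$ pairwise disjoint open sets $O_{n-1}^b$, one for each $b\in\{0,1\}^{\{t_1,\ldots,t_{n-1}\}}$. To continue you must find a \emph{single} $t_n\in T$ such that \emph{every} $O_{n-1}^b$ is split, i.e.\ $O_{n-1}^b\cap t_n^{-1}\inte(V_0)\neq\emptyset$ and $O_{n-1}^b\cap t_n^{-1}\inte(V_1)\neq\emptyset$ for all $2^{n-1}$ choices of $b$. Your splitting mechanism only produces a $t$ that works for one prescribed open set $O$: you push $O$ by $t$ so that $tO$ meets a fixed neighbourhood $U$ of a thickness point of $V_0\cap V_1$. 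But the sets $O_{n-1}^b$ can be far apart, and there is no reason a single translate should carry all of them into $U$ simultaneously. The set $\{t: tO_{n-1}^b\cap U\neq\emptyset\}=U(O_{n-1}^b)^{-1}$ is open but not dense in general, so a Baire-type intersection over $b$ does not help either.

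This is exactly the difficulty the paper's proof is designed around. The paper keeps quantitative control over the \emph{locations} of the sets: the $U_a$ are balls $\overline{B_{r(n)}(\gamma_a)}$ whose centres $\gamma_a$ are all within a fixed small distance of one another (governed by the sequence $(\eps_n)$). A separate measure-theoretic lemma then shows that for $C=V_0\cap V_1$ one has
\[
\rH\Bigl(\bigcap_{a\in\Sigma_n} C\gamma_a^{-1}\Bigr)\ \geq\ \rH(C)\cdot\Bigl(1-\sum_{j=1}^n 2^{j-1}\eta^C(\delta_j^n)\Bigr),
\]
and the $\eps_n$ are chosen in advance so that the right-hand side stays positive for all $n$. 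Any $h$ in this intersection satisfies $\gamma_a\in h^{-1}C$ for \emph{every} $a\in\Sigma_n$; choosing $t_{n+1}\in T$ close to $h$ then puts a point of $t_{n+1}^{-1}C$ inside every $U_a$ simultaneously, and properness of $V_0,V_1$ lets you carve out the two new balls $U_{a0},U_{a1}$. Without some device of this kind---controlling the geometry of the $O_{n-1}^b$ so that a single translate of $V_0\cap V_1$ hits them all---your induction does not go through.
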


\begin{rem}
In the situation of Theorem~\ref{t.irregular_implies_free_set}, we can apply
this statement with $V_1=W$, $V_0=\overline{W^c}$, $T=L^*$ and
$\cG=-\cG_W$. This yields an infinite set $I$ that satisfies the assertions of
the proposition.  Moreover, for $a_t=1$ ($t\in I$), equation
(\ref{e.existence_free_set}) yields $h\in \cG$ with $h+I\ssq V_1=W$ which, by
the compactness of $W$, gives that $I$ is relatively compact.  Hence, $S^*=I$
satisfies the assumptions of Lemma~\ref{lem: free_set_criterion}, and this
proves Theorem~\ref{t.irregular_implies_free_set}. (Note that $h$ in
(\ref{e.existence_free_set}) is contained in the respective intersection in
(\ref{e.window_translates_CPS}) with $P^*=\{s^*\in S^*\mid a_{s^*}=1\}$.)
\end{rem}

For the proof of the proposition, we need some measure-theoretic estimates
concerning intersections of translates of $V_0\cap V_1$.
We denote the right Haar-measure on $H$ by $\rH$.
Recall that $\rH$ (as well as the left Haar measure $\Theta_H$) 
on a locally compact second countable group $H$ is
outer regular. 
Hence, if $C\ssq H$ is a Borel set of positive measure and we set
\[
\eta^C(\eps)=\frac{\rH(B_\eps(C))}{\rH(C)} - 1,
\]
then $\lim_{\eps\to 0} \eta^C(\eps) = 0$.

Let $\Sigma_n=\{0,1\}^n$ (with $n\in \N$) and $\Sigma_*=\bigcup_{n\in \N} \Sigma_n$. 
Denote by $|a|$
the length of a word $a\in \Sigma_*$ (so that $a\in\Sigma_{|a|}$ for all
$a\in\Sigma_*$).
In the following, we assume the metric $d$ on the group $H$ to be invariant 
under multiplication from the left.
We denote the neutral element of $H$ by $e$.

\begin{lem} \label{lem: measure_estimate}
  Suppose that $C\ssq H$ is a Borel set with $\rH(C)>0$ and
  $(\xi_a)_{a\in\Sigma_*}$ is a family of elements $\xi_a\in H$. Let
  $(\eps_n)_{n\in\N}$ be a sequence of positive real numbers such that 
  \[
  \eps_n \ \geq \ \sup_{a\in\Sigma_n} d(e,\xi_a).
  \]
  For $j\in\N,\ n\in\N\cup\{\infty\}$, let $\delta_j^n=\sum_{\ell=j}^n \eps_\ell$.
  Further, given $n\in\N$ and $a\in\Sigma_n$, let $\gamma_a=\prod_{j=1}^n
  \xi_{a_1\ld a_j}=\xi_{a_1}\xi_{a_1,a_2}\ldots\xi_{a_1\ld a_n}$. Then for each $n\in\N$, we have
  \begin{equation}
    \label{e.measure_estimate}
    \rH\left(\bigcap_{a\in\Sigma_n} C\gamma_a^{-1}\right) \ \geq \ \rH(C)\cdot
    \left(1-\sum_{j=1}^n 2^{j-1}\eta^C(\delta_j^n)\right).
  \end{equation}
\end{lem}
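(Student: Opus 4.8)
The plan is to prove \eqref{e.measure_estimate} by induction on $n$, using two elementary ingredients: (a) the additivity identity $\rH(A\cap B)=\rH(A)+\rH(B)-\rH(A\cup B)$ together with the right-invariance of $\rH$ (so that $\rH(Cg^{-1})=\rH(C)$ for every $g\in H$); and (b) a \emph{neighbourhood bound} coming from the left-invariance of $d$: since $d(zg^{-1},z)=d(g^{-1},e)=d(e,g)$ for all $z,g\in H$, the set $Cg^{-1}$ lies in the $d(e,g)$-neighbourhood of $C$, and hence any subset of $H$ lying within distance $\delta$ of $C$ has $\rH$-measure at most $\rH(B_\delta(C))=\rH(C)(1+\eta^C(\delta))$. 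Applying the triangle inequality and left-invariance to the telescoping product $\gamma_a=\prod_{k=1}^n\xi_{a_1\ld a_k}$ gives $d(e,\gamma_a)\le\sum_{k=1}^n d(e,\xi_{a_1\ld a_k})\le\delta_1^n$ for every $a\in\Sigma_n$. (It is harmless here to read $B_\delta(C)$ as a closed $\delta$-neighbourhood; in the situations where the lemma is applied $C$ is relatively compact, so $\eta^C(\delta)\to0$ as $\delta\to0$ for this reading as well.)

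The base case $n=1$ is immediate: here $\gamma_a=\xi_a$, so by (a) and (b),
\[
\rH\!\left(C\xi_0^{-1}\cap C\xi_1^{-1}\right)\ =\ 2\rH(C)-\rH\!\left(C\xi_0^{-1}\cup C\xi_1^{-1}\right)\ \ge\ 2\rH(C)-\rH(C)(1+\eta^C(\eps_1))\ =\ \rH(C)(1-\eta^C(\delta_1^1)).
\]

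For the inductive step $n\to n+1$ the key idea is to \emph{peel off the first letter}. Given the family $(\xi_a)$, set $\xi^{(c)}_w=\xi_{cw}$ for $c\in\{0,1\}$ and $w\in\Sigma_*$; this family satisfies the hypotheses of the lemma with the shifted sequence $\tilde\eps_m=\eps_{m+1}$, and if $\gamma^{(c)}$ denotes the product built from $\xi^{(c)}$ by the same formula, then $\gamma_{(c,a'')}=\xi_c\,\gamma^{(c)}_{a''}$ for $a''\in\Sigma_n$. Since right translation commutes with intersections, this yields $\bigcap_{a\in\Sigma_{n+1}}C\gamma_a^{-1}=(E_0\xi_0^{-1})\cap(E_1\xi_1^{-1})$ with $E_c=\bigcap_{a''\in\Sigma_n}C(\gamma^{(c)}_{a''})^{-1}$. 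The inductive hypothesis applied to $\xi^{(c)}$ and $\tilde\eps$ gives $\rH(E_c)\ge\rH(C)(1-\sum_{j=1}^n 2^{j-1}\eta^C(\tilde\delta_j^n))$, and $\tilde\delta_j^n=\sum_{\ell=j}^n\eps_{\ell+1}=\delta_{j+1}^{n+1}$; on the other hand $E_c\xi_c^{-1}\subseteq C\gamma_{(c,0\ld0)}^{-1}$ lies within $\delta_1^{n+1}$ of $C$, so by (b), $\rH(E_0\xi_0^{-1}\cup E_1\xi_1^{-1})\le\rH(C)(1+\eta^C(\delta_1^{n+1}))$. Feeding these bounds into (a) (and using $\rH(E_c\xi_c^{-1})=\rH(E_c)$) and re-indexing the resulting sum via $i=j+1$ gives
\[
\rH\!\left(\bigcap_{a\in\Sigma_{n+1}}C\gamma_a^{-1}\right)\ \ge\ 2\rH(C)\left(1-\sum_{i=2}^{n+1}2^{i-2}\eta^C(\delta_i^{n+1})\right)-\rH(C)\left(1+\eta^C(\delta_1^{n+1})\right)\ =\ \rH(C)\left(1-\sum_{i=1}^{n+1}2^{i-1}\eta^C(\delta_i^{n+1})\right),
\]
which is \eqref{e.measure_estimate} at level $n+1$ and closes the induction.

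I expect the only genuine obstacle to be identifying this recursion correctly. Peeling off the \emph{last} letter of $a$ does not work, since it leaves distinct subsets $C\xi_{(a',0)}^{-1}\cap C\xi_{(a',1)}^{-1}$ of $C$ in the fibres over $a'\in\Sigma_n$, which no longer form a level-$n$ instance of the statement. Peeling off the \emph{first} letter, by contrast, makes the inductive hypothesis apply verbatim to the shifted family with the shifted error sequence $(\eps_{m+1})_m$; this shift — recorded in the identity $\tilde\delta_j^n=\delta_{j+1}^{n+1}$ — is exactly what produces the geometric weights $2^{i-1}$ and leaves the first-level term $\eta^C(\delta_1^{n+1})$ undoubled. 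The remaining bookkeeping (the re-indexing of the sums and the immaterial open-versus-closed-neighbourhood distinction) is routine.
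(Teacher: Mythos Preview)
Your proof is correct and follows essentially the same approach as the paper's: both argue by induction on $n$, peel off the first letter to write $\bigcap_{a\in\Sigma_{n+1}}C\gamma_a^{-1}=(I'\xi_0^{-1})\cap(I''\xi_1^{-1})$ with $I',I''$ level-$n$ intersections for the shifted families $\xi_{0\,\cdot}$ and $\xi_{1\,\cdot}$, apply the inductive hypothesis with the shifted error sequence (yielding $\tilde\delta_j^n=\delta_{j+1}^{n+1}$), and finish via the inclusion $I'\xi_0^{-1}\cup I''\xi_1^{-1}\subseteq B_{\delta_1^{n+1}}(C)$ together with the inclusion--exclusion bound $\rH(A\cap B)\geq\rH(A)+\rH(B)-\rH(A\cup B)$. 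Your write-up makes the role of the shift and the re-indexing slightly more explicit, but the argument is the same.
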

\proof We proceed by induction on $n$.\smallskip

 \noindent{\em Base case $(n=1)$:}\quad Note that $\Sigma_1=\{0,1\}$,
 $\gamma_0=\xi_0,\ \gamma_1=\xi_1$, $\delta_1^1=\eps_1$ and
 $d(e,\xi_0^{-1}),d(e,\xi_1^{-1})<\eps_1$.  Both $C\gamma_0^{-1}$ and
 $C\gamma_1^{-1}$ are sets of measure $\rH(C)$ that are contained in
 $B_{\delta_1^1}(C)$ which is of measure $(1+\eta^C(\delta^1_1))\cdot
 \rH(C)$. This implies that $\rH((C\gamma_0^{-1})\cap (C\gamma_1^{-1})) \geq
 \rH(C)\cdot (1-\eta^C(\delta^1_1))$ as required.\smallskip

\noindent{\em Inductive step $(n\to n+1)$:} \quad Suppose the statement holds for
some $n\in\N$, all sets $C\ssq H$, and all collections $(\xi_a)_{a\in\Sigma_*}$ as well as all sequences $(\eps_n)$ as
above. Given $a\in\Sigma_*$, let $\xi_a'=\xi_{0a}$ and $\xi_a''=\xi_{1a}$ and
define $\gamma_a',\gamma_a''$ accordingly. Then
\[
 \bigcap_{a\in\Sigma_{n+1}} C\gamma_a^{-1} \ =
 \ \Bigg(\underbrace{\left(\bigcap_{a'\in\Sigma_n}
   C{\gamma'_{a'}}^{-1}\right)}_{=: I'} \xi_0^{-1}\Bigg) \ \cap \  \Bigg(\underbrace{\left(\bigcap_{a''\in\Sigma_n}
   C{\gamma''_{a''}}^{-1}\right)}_{=: I''} \xi_1^{-1}\Bigg).
 \]
 By the induction hypothesis, both $I'\xi_0^{-1}$ and $I''\xi_1^{-1}$ are sets of measure
 \[
    \rH(I'),\rH(I'') \ \geq \ \rH(C)\cdot\left( 1-\sum_{j=1}^n
    2^{j-1}\eta^C(\delta_{j+1}^{n+1})\right)
    \]
    that are contained in $B_{\delta_1^{n+1}}(C)$. Hence, we obtain that
     \begin{align*}
    \rH\left((I'\xi_0^{-1})\cap (I''\xi_1^{-1})\right) & \geq  \rH(C)\cdot \left(1-
    \eta^C(\delta^{n+1}_1) - 2\cdot\sum_{j=1}^n 2^{j-1}\eta^C(\delta_{j+1}^{n+1})
    \right) \\ & = \rH(C)\cdot
    \left(1-\sum_{j=1}^{n+1} 2^{j-1}\eta^C(\delta_j^{n+1})\right).
    \end{align*}
    This completes the proof.  \qed\medskip

We can now turn to the \proof[\textbf{\textit Proof of
    Proposition~\ref{p.window_translates}}.]

Let $\cG=\bigcap_{n\in\N} G_n$, where each $G_n$ is an open and dense subset of
$H$. We will construct a sequence $(t_n)_{n\in\N}$ of points in $T$ and a
collection $(U_a)_{a\in\Sigma_*}$ of compact subsets
of $H$ with the following properties for all
$n\in\N$ and $a\in \Sigma_n$
\begin{itemize}
\item[{\bf (I1)}] $U_a\ssq (t_n^{-1}\, \inte(V_0))\cap G_n$ if $a_n=0$ and
  $U_a\ssq (t_n^{-1}\, \inte(V_1))\cap G_n$ if $a_n=1$;
\item[{\bf (I2)}] $U_{a0}\cup U_{a1}\ssq U_a$.
\end{itemize}
This will prove the statement: if we define
$I=\{t_n\mid n\in\N\}$, then for any given $a\in\{0,1\}^I$ we let
$a^{(n)}=(a_{t_1}\ld a_{t_n})$ and obtain from (I2) that $\bigcap_{n\in\N}
U_{a^{(n)}}$ is a nested intersection of compact sets and therefore
non-empty. 
By (I1), any $h\in \bigcap_{n\in\N} U_{a^{(n)}}$ has the property that $h\in\cG$ and $t_n h\in \inte(V_{a_{t_n}})$ for all $n\in\N$, as
required by (\ref{e.existence_free_set}).
\medskip

We are going to construct $(t_n)_{n\in\N}$ and $(U_a)_{a\in\Sigma_*}$ by
induction on $n=|a|$.  Let us first specify some details.  We will choose $U_a$
as closed balls of the form $U_a=\overline{B_{r(|a|)}(\gamma_a)}$ which we can
ensure to be compact by choosing $r(n)$ sufficiently small.  We set
$\xi_0=\gamma_0$, $\xi_1=\gamma_1$ and
$\xi_{a0}=\gamma_a^{-1}\gamma_{a0},\ \xi_{a1}=\gamma_{a}^{-1}\gamma_{a1}$ for
$a\in\Sigma_n$, $n\geq 1$.  By definition, we hence have
$\gamma_a=\prod_{j=1}^n\xi_{a_1\ld a_j}$ which is consistent with the notation
of Lemma~\ref{lem: measure_estimate}.  Further, we let $C=V_0\cap V_1$.  Observe
that if $\Theta_H(C)>0$, then $\rH(C)>0$ since $\Theta_H$ and $\rH$ are mutually absolutely
continuous \cite[Theorem~15.15 \& Comment~15.27]{HewittRoss1963}.  We fix a sequence $(\eps_n)_{n\in\N}$ such
that
\[
     \sum_{j=1}^\infty 2^{j-1} \eta^C(\delta^\infty_j) \ < \ 1,
\]
where the $\delta_j^n$ are defined as in
Lemma~\ref{lem: measure_estimate}. We moreover include the condition
\begin{itemize}
\item[{\bf (I3)}] $\sup_{a\in\Sigma_n} d(e,\xi_a) \leq \eps_n$
\end{itemize}
in the inductive assumption.  Note that this boils down to choosing
$\gamma_{a0}$ and $\gamma_{a1}$ $\eps_{n+1}$-close to $\gamma_a$ in each step of
the construction. \smallskip

In the case that $H=\kreis$ and $C=V_0\cap V_1$ is a Cantor set, the sequence
\nfolge{\eps_n} and condition (I3) will not be needed. Instead, we will use the
assumption
\begin{itemize}
\item[{\bf (I3')}] for all $a\in\Sigma_n$ we have $\partial U_a\ssq t_{n+1}^{-1}C$ 
\end{itemize}
in this case.

Let us first consider the case that $\Theta_H(C)>0$.
\smallskip

{\em Base case $(n=1)$:}\quad We choose $t_1\in T$ and two open balls
$U_0'=\overline{B_{r(1)}(\xi_0')}\ssq \inte(V_0)\cap t_1 (G_1\cap
B_{\eps_1}(e))$ and $U_1'=\overline{B_{r(1)}(\xi_1')}\ssq \inte(V_1)\cap t_1
(G_1\cap B_{\eps_1}(e))$.
If we let $U_0=t_1^{-1}U'_0$ and $U_1=t_1^{-1}U_1'$, then (I1) and (I3) are
satisfied for $n=1$, and (I2) is still void.\smallskip

{\em Inductive step $(n\to n+1)$:}\quad Suppose now that
$t_1\ld t_n$ and $U_a$ for $a\in \bigcup_{j=1}^n \Sigma_j$ have been chosen and
satisfy (I1)--(I3).  Then Lemma~\ref{lem: measure_estimate} gives
\[
\rH\left(\bigcap_{a\in\Sigma_n} C\gamma_a^{-1}\right)
\ \geq \ \rH(C)\cdot \left(1-\sum_{j=1}^n 2^{j-1} \eta^C(\delta^\infty_j)\right) \ > \ 0.
\]
In particular, the set on the left is non-empty and we can choose $h\in
\bigcap_{a\in\Sigma_n} C\gamma_a^{-1}$. 
Clearly, $\gamma_a\in
h^{-1}C$ for all $a\in\Sigma_n$. 
Now, we choose 
$t_{n+1}\in T$ close enough to $h$ to guarantee
that $t_{n+1}^{-1}C$ intersects $B_{r'(n+1)/2}(\gamma_a)\ssq U_a$ for all
$a\in \Sigma_n$, where $r'(n+1)=\min\{\eps_{n+1},r(n)\}$. 
However, since points in $C$ lie in the closure of the interior
of both $V_0$ and $V_1$, this allows to find $r(n+1)>0$ as well as closed
balls $U_{a0}=B_{r(n+1)}(\gamma_{a0})$ and $U_{a1}=B_{r(n+1)}(\gamma_{a1})$ with
midpoints $\gamma_{a0}$ and $\gamma_{a1}$ $\eps_{n+1}$-close to $\gamma_a$ for
all $a\in \Sigma_n$ such that (I1)--(I3) are satisfied for $n+1$.

If $H=\kreis$ and $C$ is a Cantor set, the statement follows in a similar way
without invoking Lemma~\ref{lem: measure_estimate}. The crucial observation here
is that if we choose some $\Delta t_{n+1}$ sufficiently close to zero, then the rotation by
$t_{n+1}=\Delta t_{n+1}t_{n}$ will send one of the endpoints of each $U_a$, $a\in\Sigma_n$, into
$\inte(U_a)$ (the left endpoints if $\Delta t_{n+1}$ is locally to the right of zero
and vice versa). Hence, we arrive at a similar situation as in the first
case. \qed\medskip

Altogether, we have now completed the proof of
Theorem~\ref{t.irregular_implies_free_set}.

\subsection{Application to symbolic systems} \label{SymbolicSystems}
In the following, given a subshift $(\Sigma,\mathbb{Z})$, we denote by 
$\beta\: \Sigma\to H$ the factor map onto its MEF.
Note that $(H,\Z)$ is completely characterised by a self-map on $H$ which we denote by $\rho$, that is,
$nh=\rho^n(h)$ ($h\in H,\, n\in \N$).

The basis for the direct application of the results from the last section to symbolic systems is
provided by the following fact. 
 
 \begin{prop}[Compare \cite{BaakeJaegerLenz2016ToeplitzFlowsModelSets,Paul1976}]
   An almost automorphic subshift $(\Sigma,\mathbb{Z})$ is isomorphic to the
   system $(\Omega(\oplam(W)),\mathbb{Z})$ obtained from the CPS
   $(\mathbb{Z},H,\mathcal{L})$ with lattice $\mathcal{L}=\{(n,\rho^n(h_0))\:
   n\in\mathbb{Z}\}$, where
   \begin{itemize}
   \item $h_0\in H$ has unique preimage under the factor map $\beta$;
   \item $W=\beta([1])$, where $[1] = \{\xi \in\{0,1\}^\mathbb{Z}\: \xi_0=1\}$.
   \end{itemize}
   Moreover, the window $W$ is proper, that is,
   $\overline{\mathrm{int}(W)}=W$.
 \end{prop}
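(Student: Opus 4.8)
The plan is to write down the cut and project scheme explicitly and then to verify that the obvious candidate for the isomorphism, $\Phi(\xi)=\supp(\xi)$, is a well-defined, continuous, equivariant bijection onto $\Omega(\oplam(W))$. Throughout I would write $[1]$ also for the clopen cylinder $\{\xi\in\Sigma:\xi_0=1\}$ in $\Sigma$ and exclude the trivial case $\Sigma=\{0^{\Z}\}$, so that $1$ occurs in every point of $\Sigma$.

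First I would fix the algebraic picture on the MEF. Since $\Z$ is abelian, Theorem~\ref{thm: representation of general equicont sys}(a) identifies $(H,\Z)$ with $(E(H),\Z)$; transporting the group structure of the compact metrisable abelian group $E(H)$ to $H$ via the isomorphism $\tau\mapsto\tau(h_0)$ (with $h_0$ the chosen point, which exists since the residual set of injectivity points of the almost one-to-one map $\beta$ is nonempty) makes $h_0$ the neutral element and turns the $\Z$-action into the rotation $h\mapsto h+h_1$ with $h_1=\rho(h_0)$. Then $\mathcal L=\{(n,nh_1):n\in\Z\}$ is a subgroup of $\Z\times H$; it is discrete because $\{n\}\times H$ is open in $\Z\times H$ and meets $\mathcal L$ in the single point $(n,nh_1)$, and it is co-compact because $h\mapsto(0,h)+\mathcal L$ is a continuous bijective homomorphism of the compact group $H$ onto $(\Z\times H)/\mathcal L$, hence a topological isomorphism. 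Conditions (i) and (ii) of a CPS also hold: $\pi_{\Z}|_{\mathcal L}$ is clearly injective, and $\pi_H(\mathcal L)=\{nh_1:n\in\Z\}$ is the $\Z$-orbit of $h_0$, which is dense by minimality of $(H,\Z)$. Thus $(\Z,H,\mathcal L)$ is a CPS with $L=\Z$, $L^*=\{\rho^n(h_0):n\in\Z\}$, star map $n\mapsto\rho^n(h_0)$ and
\[
  \oplam(W) \ = \ \{n\in\Z:\rho^n(h_0)\in W\}.
\]

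Next I would treat the window $W=\beta([1])$. It is compact, being the continuous image of the closed set $[1]\ssq\Sigma$. Moreover $[1]$ is clopen in $\Sigma$, hence proper, and $\beta$ is almost one-to-one because $\Sigma$ is almost automorphic, so Lemma~\ref{l.proper_subsets} shows that $W$ is a proper subset of $H$; in particular $\inte(W)\neq\emptyset$ and $\oplam(W)$ is a model set. Now set $\Phi\colon\Sigma\to\cD(\Z)$, $\Phi(\xi)=\supp(\xi)$. This is well-defined: $\supp(\xi)$ is automatically uniformly discrete and has FLC (balls in $\Z$ are finite), and it is relatively dense because the open cover $\{\sigma^{-n}([1]):n\in\Z\}$ of $\Sigma$ admits a finite subcover by compactness, so the symbol $1$ occurs in every $\xi\in\Sigma$ with uniformly bounded gaps. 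The map $\Phi$ is equivariant since $\supp(\sigma\xi)=\supp(\xi)-1$, it is continuous because on $\cD(\Z)$ two sets are close exactly when they agree on a large ball around $0$ (for $\varepsilon<1$ the translation $\delta$ in the definition of $\dist$ is forced to be $0$), and it is injective because a $\{0,1\}$-sequence is determined by its support.

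Finally I would identify the image of $\Phi$. Let $\xi^{(0)}=\beta^{-1}(h_0)$ be the unique preimage of $h_0$. Equivariance of $\beta$ gives $\beta^{-1}(\rho^n(h_0))=\sigma^n\beta^{-1}(h_0)$, so each $\rho^n(h_0)$ likewise has a unique preimage, namely $\sigma^n\xi^{(0)}$, and hence
\[
  \xi^{(0)}_n=1 \quad\Longleftrightarrow\quad \sigma^n\xi^{(0)}\in[1] \quad\Longleftrightarrow\quad \rho^n(h_0)=\beta(\sigma^n\xi^{(0)})\in W,
\]
where the last equivalence uses that $\sigma^n\xi^{(0)}$ is the only point of its $\beta$-fibre. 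Therefore $\Phi(\xi^{(0)})=\supp(\xi^{(0)})=\oplam(W)$. Since $\Sigma=\overline{\orb(\xi^{(0)})}$ by minimality and $\Phi$ is continuous and equivariant, $\Phi(\Sigma)=\overline{\orb(\oplam(W))}=\Omega(\oplam(W))$; a continuous equivariant bijection between compact metric dynamical systems is an isomorphism, which proves the proposition (and incidentally identifies $\beta$ with the torus parametrisation of $\Omega(\oplam(W))$). I expect the only genuinely delicate point to be the displayed equivalence above: without the hypothesis that $h_0$ — equivalently, each $\rho^n(h_0)$ — has a unique $\beta$-preimage, one would obtain only $\supp(\xi^{(0)})\ssq\oplam(W)$, so this hypothesis is exactly what pins $\Phi$ down to the correct model set. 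A secondary point to watch is the verification that $\supp(\xi)$ is genuinely Delone for every $\xi$, which is where minimality of $(\Sigma,\Z)$ enters.
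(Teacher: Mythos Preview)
Your argument is correct. The paper does not actually supply a proof of this proposition; it is stated with the attribution ``Compare \cite{BaakeJaegerLenz2016ToeplitzFlowsModelSets,Paul1976}'' and immediately used. Your write-up fills in precisely the details one finds (in various forms) in those references: you set up the CPS by putting the Ellis-group structure on $H$ with $h_0$ as neutral element, verify discreteness and co-compactness of $\mathcal L$ and the two CPS axioms, use Lemma~\ref{l.proper_subsets} to get properness of $W$ from the clopen cylinder $[1]$, and then check that $\Phi(\xi)=\supp(\xi)$ is a continuous equivariant bijection onto $\Omega(\oplam(W))$ by first matching $\Phi(\xi^{(0)})$ with $\oplam(W)$ and then closing up via minimality. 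The delicate step you flag --- that uniqueness of the $\beta$-fibre over each $\rho^n(h_0)$ is exactly what turns the a priori inclusion $\supp(\xi^{(0)})\subseteq\oplam(W)$ into an equality --- is indeed the crux, and your handling of it is clean.
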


 As a consequence of Theorem~\ref{t.irregular_implies_free_set} and Corollary
 \ref{c.nontameness_subshifts}, we obtain
 \begin{cor}
   If an almost automorphic subshift $(\Sigma,\Z)$ is irregular, then it has an
   infinite free set. In particular, it is non-tame. The same result holds if
   the maximal equicontinuous factor is an irrational circle rotation and
   $\beta([0])\cap\beta([1])$ is a Cantor set.
 \end{cor}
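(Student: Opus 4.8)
The plan is to reduce both assertions to Theorem~\ref{t.irregular_implies_free_set} by means of the preceding proposition, and then to read off non-tameness from Corollary~\ref{c.nontameness_subshifts}.

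First I would apply the proposition: $(\Sigma,\Z)$ is isomorphic to the model set system $(\Omega(\oplam(W)),\Z)$ associated with the CPS $(\Z,H,\cL)$, where $H$ is the group underlying the maximal equicontinuous factor --- a compact metrisable abelian group by Lemma~\ref{l.MEF} and Theorem~\ref{thm: representation of general equicont sys}, hence in particular locally compact and second countable --- and where $W=\beta([1])$ is a proper window. The isomorphism $\Phi$ necessarily matches $[1]$ with $\{\Gamma : 0\in\Gamma\}$ (both project onto the window inside the maximal equicontinuous factor), so by equivariance $\Phi(\xi)=\{n\in\Z:\xi_n=1\}$, and each element of $\Omega(\oplam(W))$ is a subset of $L=\Z$. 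Since tameness is a conjugacy invariant and a free set for $\Omega(\oplam(W))$ transports along $\Phi$ to an infinite independence set for $\Sigma$ in the symbolic sense of Corollary~\ref{c.nontameness_subshifts} --- indeed, given $P\ssq S$, a $\Gamma\in\Omega(\oplam(W))$ with $\Gamma\cap S=P$ corresponds to a $\xi\in\Sigma$ with $\xi_s=1$ for $s\in P$ and $\xi_s=0$ for $s\in S\smin P$ --- it suffices to establish the existence of an infinite free set for $\Omega(\oplam(W))$.

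In the irregular case, the isomorphic almost one-to-one extension $(\Omega(\oplam(W)),\Z)\to(\T,\Z)$ is irregular, which forces $\Theta_H(\partial W)>0$: were $\Theta_H(\partial W)=0$, the set of $t\in H$ with $(\partial W+t)\cap L^*=\emptyset$ would be a co-null, $L^*$-invariant subset of $H$, so by the fibre description in Section~\ref{subsec:torusParametrisation} a full $\mu$-measure set of fibres of the torus parametrisation would be singletons, making the extension regular --- this is precisely the consistency of the two notions of (ir)regularity recorded in Section~\ref{subsec:torusParametrisation}. With $\Theta_H(\partial W)>0$ in hand, Theorem~\ref{t.irregular_implies_free_set} provides an infinite free set for $\Omega(\oplam(W))$, hence for $\Sigma$, and Corollary~\ref{c.nontameness_subshifts} yields non-tameness.

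In the Cantor case, $H=\kreis$ carries an irrational rotation and $\beta([0])\cap\beta([1])$ is a Cantor set; here I would show $\beta([0])\cap\beta([1])=\partial W$, after which the Cantor-set alternative of Theorem~\ref{t.irregular_implies_free_set} applies verbatim and one concludes as in the previous paragraph. For the identification, note that $0\in L$ with $0^*=0$, so by (\ref{eq:flowMorphism}) a fibre element over the point of $\T$ corresponding to $t\in H$ contains $0$ whenever $-t\in\inte(W)$ and avoids $0$ whenever $-t\in W^c$, while for $-t\in\partial W$ the membership of $0$ is left undetermined. Using (\ref{eq:equivalenceForFibre}), the density of $L^*$ and the properness of $W$, for every boundary point one produces approximating sequences $t_j\in L^*$ with $-t_j\in\inte(W)$, respectively $-t_j\in W^c$, whose associated limits lie in the fibre and contain, respectively avoid, $0$; here one uses that these limits are subsets of the uniformly discrete set $L=\Z$, so membership of $0$ is stable in the limit. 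Thus $\beta([1])$ corresponds to $W$ and $\beta([0])$ to $\overline{W^c}=H\smin\inte(W)$, whence $\beta([0])\cap\beta([1])$ corresponds to $W\cap\overline{W^c}=\partial W$. Apart from this last identification, the argument is a chain of citations, and it is exactly this bookkeeping --- carried out through the fibre equations (\ref{eq:flowMorphism}) and (\ref{eq:equivalenceForFibre}) with sign conventions kept consistent --- that I expect to be the only delicate point.
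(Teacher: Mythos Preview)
Your proposal is correct and follows exactly the route the paper intends: the paper states the corollary simply ``as a consequence of Theorem~\ref{t.irregular_implies_free_set} and Corollary~\ref{c.nontameness_subshifts}'' without further argument, and you have supplied precisely the details behind that citation chain, including the identification $\beta([0])\cap\beta([1])=\partial W$ needed to invoke the Cantor-set alternative of Theorem~\ref{t.irregular_implies_free_set}.
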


 For the special case of Toeplitz flows (where $(H,\Z)$ is an adding machine),
 a similar result has been established previously
 by Downarowicz \cite{Downarowicz}.
 Further, note that the existence of an infinite
 free set also implies positive sequence entropy.
 
\subsection{The case of minimal group actions} \label{GroupActions}

Theorem~\ref{t.tame_implies_regular_for_group_actions} provides an analogue to
Theorem~\ref{t.irregular_implies_free_set} for the case of general automorphic systems. 
However, it is worth noting that it does not
imply Theorem~\ref{t.irregular_implies_free_set} as a corollary, since the
existence of an infinite free set --as defined in Section~\ref{sec: delone dynamical systems}-- does not follow directly from
non-tameness.

%

\begin{proof}[Proof of Theorem~\ref{t.tame_implies_regular_for_group_actions}]
Let us denote the maximal equicontinuous factor of $(X,T)$ by $(H,T)$.
As $(H,T)$ is minimal and equicontinuous, Theorem~\ref{thm: representation of general equicont sys} implies 
that $(H,T)$ is a factor of $(E(H),T)$.
We denote the corresponding factor map by $\pi$ and the unique $T$-invariant measure on $H$ by $\mu$.
Recall that $\mu=\Theta_{E(H)} \circ \pi^{-1}$ and $\pi$ is open.

Assume for a contradiction that $\beta$ is not almost surely one-to-one with respect to the
measure $\mu$ on $H$ so that $(X,T)$ is an irregular extension of
$(H,T)$.  
We aim to show the existence of an independence pair $(U_0,U_1)$ for
$(X,T)$ which implies non-tameness by Theorem~\ref{t.independence_pair}.

To that end, denote by $\cK(X)$ the space of compact subsets of $X$, equipped
with the Hausdorff metric $d_H$, and consider the mapping
\[
F : H\to \cK(X), \qquad \xi \mapsto \beta^{-1}(\xi).
\]
By compactness of $X$ and continuity of $\beta$, the map $F$ is upper
semicontinuous and hence measurable.  By Lusin's theorem, we may therefore
choose a compact set $K\ssq H$ of positive measure such that $F_{|K}$ is
continuous.  Let $K_0\ssq K$ denote the topological support of the measure
$\mu|_K$ (that is, the essential closure of $K$). Then, $\mu(K_0)=\mu(K)>0$.
Hence, by irregularity, we can find $h_0\in K_0$ such that $\sharp
\beta^{-1}(h_0)>1$.  Moreover, we have that $\mu(V\cap K)>0$ for any
neighbourhood $V$ of $h_0$.

Choose $\xi_0\neq \xi_1\in \beta^{-1}(h_0)$ and let $\eps=d(\xi_0,\xi_1)/4$ and
$U_0=\overline{B_\eps(\xi_0)},\ U_1=\overline{B_\eps(\xi_1)}$. 
We aim to show that
$(U_0,U_1)$ is an independence pair for $(X,T)$, that is, there is an infinite
set $I\ssq T$ such that for any $a\in\{0,1\}^I$ there exists $\xi\in X$ with the
property that
\begin{equation}
  \label{e.independent_orbits}
  t\xi \in U_{a_t} \qquad (t\in I).
\end{equation}
Let $V_0=\beta(U_0)$ and $V_1=\beta(U_1)$. 
By Lemma~\ref{l.proper_subsets}, both
these sets are proper, that is, $V_0=\overline{\inte(V_0)}$ and
$V_1=\overline{\inte(V_1)}$. Moreover, they have disjoint interiors since
points with singleton fibres are dense.

Due to the continuity of $F$ on $K$, we can choose $\delta>0$ such that for any
$h\in B_\delta(h_0)\cap K$ we have $d_H(F(h),F(h_0))<\eps$.  This yields that
the fibre $F(h)=\beta^{-1}(h)$ intersects both $U_0$ and $U_1$, so that $h\in
V_0\cap V_1$.  Therefore, $B_\delta(h_0)\cap K\ssq V_0\cap V_1$ so that
$\mu(V_0\cap V_1)\geq \mu(B_\delta(h_0)\cap K)>0$.  Set $V_0'=\pi^{-1}(V_0)$
and $V_1'=\pi^{-1}(V_1)$.  Since $\pi$ is open, $V_0'$ and $V_1'$
are proper.
Moreover, $\Theta_{E(H)}(V_0'\cap V_1')=\mu(V_0\cap V_1)>0$.  Thus, the assertions of
Proposition~\ref{p.window_translates} are met by $V_0',V_1'\ssq E(H)$
with $\cG=\pi^{-1}(\beta(X_0))$, where $X_0$ denotes the set of injectivity
points of $\beta$ (observe that $\cG$ is residual, since $\pi$ is open).

Hence, we obtain an infinite set $I\ssq T$, and for each $a\in\{0,1\}^I$ a point $h'\in
\cG$ such that $t h'  \ \in \ \inte(V_{a_s}')$ $(t\in I)$ and hence
\begin{equation} \label{e.independent_base_orbit}
  t h  \ \in \ \inte(V_{a_s}) \qquad (t\in I)
\end{equation}
for $h=\pi(h')\in \beta(X_0)$.
However, since $h$ has a unique preimage under $\beta$ (and the same is true for
all points in its orbit), (\ref{e.independent_base_orbit}) directly implies
(\ref{e.independent_orbits}) so that $(U_0,U_1)$ is an independence pair as
claimed.  
\end{proof}

\section{Self-similarity and locally disjoint complements: two criteria for zero entropy} 
 \label{Self-similarity}
 Let $G$ and $H$ be locally compact abelian second countable groups and let $G$
 be non-compact.  Consider a CPS $(G,H,\cL)$ with proper window $W \ssq H$ and
 torus parametrisation $\beta : \Omega(\oplam(W)) \to \T$.  In this section, we
 provide sufficient criteria for zero entropy of $(\Omega(\oplam(W)),G)$ in
 terms of the local structure of $W$.  We note that all points $[s,t]_\cL \in
 \T$ are translates of $[0,t]_\cL$ and hence $\sharp \beta^{-1}([s,t]_\cL) =
 \sharp \beta^{-1}([0,t]_\cL)$.  Therefore, in the following, it is sufficient
 to consider points $[0,t]_\cL \in \T$.

\subsection{Self similar windows.} 
 \label{self similar windows} 
 As a direct consequence of the discussions in Section \ref{subsec:topologicalEntropy} and  
 Theorem \ref{thm:entropyBowen}, we obtain 
 
 \begin{lem}
  \label{lem:zeroEntropy}
  \begin{enumerate}[(i)]
   \item  If $\sharp \beta^{-1}(\xi) < \infty$ for $\xi \in \T$, then $\htop^\xi(\varphi)=0$. 
   \item  If $G = \R$ and $\sharp \beta^{-1}(\xi) < \infty$ for all $\xi \in \T$, then $\htop(\varphi) = 0$. 
  \end{enumerate}
 \end{lem}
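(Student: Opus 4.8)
The plan is to prove both assertions directly from the fibre-entropy estimate in Theorem~\ref{thm:entropyBowen} together with the elementary observation that a dynamical system supported on a finite set has zero topological entropy. For part~(i), fix $\xi\in\T$ with $\sharp\beta^{-1}(\xi)=N<\infty$. The key point is that $\htop^\xi(\varphi)=\htop^{\beta^{-1}(\xi)}(\varphi)$ is computed using $(\epsilon,n)$-spanning sets for the \emph{finite} set $C=\beta^{-1}(\xi)=\{\Gamma_1,\dots,\Gamma_N\}$. For any $\epsilon>0$ and any $n$, the singletons $\{\Gamma_i\}$ trivially provide an $(\epsilon,n)$-spanning set for $C$ of cardinality $N$ (each $\Gamma_i$ spans itself, since $\max_{s\in A_n} d(\varphi(s,\Gamma_i),\varphi(s,\Gamma_i))=0<\epsilon$). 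Hence $S^C(\varphi,\epsilon,n)\le N$ for all $n$, which gives
\[
  h^C_\epsilon(\varphi) \ = \ \limsup_{n\to\infty}\frac{1}{\Theta_T(A_n)}\log S^C(\varphi,\epsilon,n) \ \leq \ \limsup_{n\to\infty}\frac{\log N}{\Theta_T(A_n)} \ = \ 0,
\]
because $\Theta_T(A_n)\to\infty$ along any van Hove sequence in the non-compact group $T$. Letting $\epsilon\to 0$ yields $\htop^\xi(\varphi)=0$.

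For part~(ii), we specialise to $G=T=\R$ and apply Theorem~\ref{thm:entropyBowen} with $\phi=\varphi$ the translation action on $X=\Omega(\oplam(W))$, $\psi$ the translation action on the maximal equicontinuous factor $H=\T$, and $\beta$ the torus parametrisation. The theorem gives
\[
  \htop(\varphi) \ \leq \ \htop(\psi) + \sup_{\xi\in\T}\htop^\xi(\varphi).
\]
Now $\psi$ is an equicontinuous (indeed minimal) $\R$-action on the compact group $\T$, so it is isometric with respect to a translation-invariant metric and therefore has zero topological entropy; thus $\htop(\psi)=0$. By part~(i), the hypothesis $\sharp\beta^{-1}(\xi)<\infty$ for \emph{all} $\xi\in\T$ forces $\htop^\xi(\varphi)=0$ for every $\xi$, hence $\sup_{\xi\in\T}\htop^\xi(\varphi)=0$. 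Combining, $\htop(\varphi)\le 0$, and since topological entropy is always non-negative we conclude $\htop(\varphi)=0$.

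I do not expect any serious obstacle here: the statement is essentially a bookkeeping consequence of results already established in Section~\ref{subsec:topologicalEntropy}. The only points requiring a word of care are (a) that the ambient group is non-compact so that $\Theta_T(A_n)\to\infty$, which is exactly what makes the constant bound $\log N$ negligible, and (b) the vanishing of $\htop(\psi)$, which follows from equicontinuity of the maximal equicontinuous factor. For part~(ii) one should also remark that, as noted after Theorem~\ref{thm:entropyBowen}, zero entropy with respect to one van Hove sequence in the Euclidean case implies zero entropy with respect to all of them, so the conclusion does not depend on the choice of $(A_n)_{n\in\N}$.
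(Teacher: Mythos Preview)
Your proposal is correct and follows precisely the route the paper intends: the lemma is stated as a direct consequence of the discussion in Section~\ref{subsec:topologicalEntropy} and Theorem~\ref{thm:entropyBowen}, and your argument simply spells out those details---the finite fibre spans itself so $S^C(\varphi,\epsilon,n)\leq N$, and then Bowen's inequality together with $\htop(\psi)=0$ for the equicontinuous factor yields~(ii).
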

 In view of the above lemma, it is our first goal to control the number of elements in the fibres of
 $\beta$. Note that the above assumptions are quite strong, since {\em all} fibres are assumed
 to be finite.
 Even if the measure of $\partial W$ vanishes (so that $\sharp \beta^{-1}(\xi) = 1$ for $\Theta_\T$-a.e. $\xi \in \T$), there
 may still exist fibres with infinite cardinality
 (compare the constructions in
 \cite{JaegerLenzOertel2016PositiveEntropyModelSets},
 \cite{MarkleyPaul1979}). 

 Consider a point $[0,t]_\cL \in \T$. As a consequence of equation
 \eqref{eq:flowMorphism}, Delone sets contained in $\beta^{-1}([0,t]_\cL)$
 basically differ from each other in points $l$ whose conjugates $l^*$ are
 contained in $\partial W + t$. Hence, in case of $(\partial W + t) \cap L^* =
 \emptyset$, the fibre $\beta^{-1}([0,t]_\cL)$ is a singleton, carrying no
 entropy. In case $(\partial W + t) \cap L^* \neq \emptyset$, the cardinality of
 $\{ l^* \mid l^* \in (\partial W + t) \cap L^* \}$ may be finite or not.  In the
 first case, the cardinality of the respective fibre of $\xi=[0,t]_\cL$ will be
 finite, so that we immediately obtain zero entropy by Lemma
 \ref{lem:zeroEntropy}.  Hence, we just have to investigate the latter
 case. Note that by Birkhoff's Ergodic Theorem, positive measure of $\partial W$
 ensures the existence of points $[0,t]_\cL \in \T$ such that $\sharp \{ l^*
 \mid l^* \in (\partial W + t) \cap L^* \} = \infty$.

 We call $[s,t)_\cL$ \emph{critical} if $(\partial W+t)\cap L^*\neq \emptyset$. 
 For a given critical point $[0,t]_\cL$, we say that $l_1^*, l_2^* \in
 \partial W + t \cap L^*$ are {\em similar with respect to $t$} if there exists
 some $\epsilon > 0$ such that
  $$ (B_{\epsilon}(l_1^*) \cap (W + t)) - l_1^* = (B_{\epsilon}(l_2^*) \cap (W +
 t)) - l_2^*. $$ Clearly, being similar with respect to some fixed $t$ is an
 equivalence relation. We call the corresponding equivalence classes {\em
   similarity classes with respect to $t$}. If for each $t$ there are only
 finitely many similarity classes, we call {\em $W$ self similar}. If the
 maximal number of similarity classes for any $t$ is $k$, we call $W$ {\em
   $k$-self similar}. Finally, if $k=1$, then we call $W$ {\em perfectly self
   similar}.
  
 \begin{lem}
  \label{lem: self similar points yield similar sets} 
  Let $[0,t]_\cL \in \T$. Suppose $l_1^*, l_2^* \in (\partial W + t) \cap L^*$ are
  similar with respect to $t$. Then for each $\Gamma \in \beta^{-1}([0,t]_\cL)$,
  we have $l_1 \in \Gamma$ if and only if $l_2 \in \Gamma$.
 \end{lem}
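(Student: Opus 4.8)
The plan is to work directly from the characterisation of the fibres given in \eqref{eq:flowMorphism}, namely $\oplam(\inte(W)+t) \ssq \Gamma \ssq \oplam(W+t)$ for $\Gamma \in \beta^{-1}([0,t]_\cL)$ (taking $s=0$). Since $l_1^*, l_2^* \in (\partial W + t) \cap L^*$, neither $l_i$ is forced to lie in $\Gamma$ nor forced to be excluded; the content of the lemma is that the \emph{same} choice is made for both. By similarity there is $\eps>0$ with $(B_\eps(l_1^*) \cap (W+t)) - l_1^* = (B_\eps(l_2^*)\cap (W+t)) - l_2^*$; write $\tau = l_2^* - l_1^*$, so that translation by $\tau$ carries $B_\eps(l_1^*)\cap(W+t)$ onto $B_\eps(l_2^*)\cap(W+t)$ and in particular $l_1^* + A = W+t$ near $l_1^*$ iff $l_2^* + A = W+t$ near $l_2^*$ for the local model $A$.

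The key step is to exploit equation \eqref{eq:equivalenceForFibre}: since $[0,t]_\cL$ is the relevant point, there is a sequence $(t_j) \in (L^*)^\N$ with $t_j \to t$ and $\oplam(W+t_j) \to \Gamma$ in the Delone metric. Fix a radius $\rho$ large enough that both $l_1$ and $l_2$ lie in $B_\rho(0)$; for $j$ large, the patch of $\oplam(W+t_j)$ inside $B_\rho(0)$ agrees with that of $\Gamma$ (using Remark~\ref{remark: inclusion in fibres}, the convergence is eventually without a translation correction since the limit lies in a fibre), so $l_i \in \Gamma \iff l_i \in \oplam(W+t_j) \iff l_i^* \in W + t_j$ for both $i=1,2$, for all large $j$. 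Now translate by $\tau$: I claim $l_1^* \in W+t_j \iff l_2^* \in W+t_j$ for $j$ large. Indeed $t_j - t \to 0$, so $l_i^*$ lies within $\eps/2$ of $\partial W + t$, hence $l_i^* \in B_\eps(l_i^*)$-neighbourhood on which $W+t$ is described by the common local model $A$; since $t_j \to t$, membership $l_i^* \in W+t_j$ is determined, for $j$ large, by the same local picture shifted by $t_j - t$, and the shift-compatibility of the two neighbourhoods forces $l_1^* \in W+t_j \iff l_2^* \in W+t_j$. Combining, $l_1 \in \Gamma \iff l_2 \in \Gamma$.

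The main obstacle is making the last claim fully rigorous: I need that for $j$ large the point $l_i^*$ stays inside the ball $B_\eps(l_i^*)$ on which similarity gives the translation identity, and that the translate $\tau$ intertwines the conditions ``$l_1^* \in W+t_j$'' and ``$l_2^* \in W+t_j$''. Concretely, $l_2^* \in W + t_j$ iff $l_2^* - (t_j - t) \in W + t$; for $\|t_j - t\|$ small this puts $l_2^* - (t_j - t)$ inside $B_\eps(l_2^*)$, where $W+t$ coincides with $\tau + (B_\eps(l_1^*)\cap(W+t))$, i.e.\ $l_2^* - (t_j-t) \in W+t \iff l_1^* - (t_j - t) \in W+t \iff l_1^* \in W+t_j$. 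This is the crux and is a short but careful $\eps$-argument; everything else is a direct unwinding of \eqref{eq:flowMorphism} and \eqref{eq:equivalenceForFibre}. One should also note the degenerate possibility that $\partial W + t$ could meet $L^*$ in $l_1^*$ but that $\Gamma$ is still forced (if $l_1^*$ also lies in $\inte(W)+t$, which cannot happen as $l_1^*\in\partial W+t$), so no separate case analysis is needed.
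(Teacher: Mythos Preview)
Your approach is correct and essentially the same as the paper's: use \eqref{eq:equivalenceForFibre} to get approximants $\oplam(W+t_j)$, observe that membership of $l_i$ in $\Gamma$ is detected by membership in $\oplam(W+t_j)$ for large $j$, and then verify $l_1^* \in W+t_j \iff l_2^* \in W+t_j$ via the local similarity (your explicit shift argument $l_i^* \in W+t_j \iff l_i^* - (t_j-t) \in W+t$ is exactly what the paper compresses into ``by the assumptions''). One minor correction: Remark~\ref{remark: inclusion in fibres} does not apply as stated, since $\oplam(W+t_j)$ lies in the fibre over $[0,t_j]_\cL$, not $[0,t]_\cL$; the reason the translation correction in the Delone metric vanishes is rather that both $\oplam(W+t_j)$ and $\Gamma$ are subsets of the uniformly discrete set $L$ (this is the role of ``FLC'' in the paper's proof).
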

 \begin{proof}
  Let $\Gamma \in \beta^{-1}([0,t]_\cL)$ and suppose $l_1 \in \Gamma$. Due to
  Equation \ref{eq:equivalenceForFibre} and FLC, there exists a sequence $t_j \in
  L^*$ with $t_j\to t$ such that $l_1 \in \oplam(W+t_j)$ for all
  $j$. Thus, $l_1^* \in W+t_j$. By the assumptions, we also
  obtain $l_2^* \in W+t_j$ for large enough $j$. Hence, $l_2 \in \jLim
  \oplam(W+t_j) = \Gamma$.  
  By symmetry, the statement follows.
 \end{proof}
  
 \begin{lem}
  Let $(G,H,\cL)$ be a CPS with proper window $W \ssq H$. If there exist $k$
  similarity classes with respect to $t$, then $\beta^{-1}([0,t]_\cL)\leq
  2^k$. Hence, if $W$ is self similar, we have $\sharp \beta^{-1}(\xi)<\infty$
  and therefore $\htop^\xi(\varphi) = 0$ for all $\xi \in \T$, and if $W$ is
  $k$-self similar, then $\beta^{-1}(\xi)\leq 2^k$ for all $\xi\in\T$.
 \end{lem}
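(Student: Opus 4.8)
The plan is to encode each element of a fibre $\beta^{-1}([0,t]_\cL)$ by a finite string of bits, one bit per similarity class with respect to $t$, and thereby bound the fibre cardinality by $2^k$.

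First I would note that, by equation~\eqref{eq:flowMorphism} applied with $s=0$, every $\Gamma\in\beta^{-1}([0,t]_\cL)$ satisfies $\oplam(\inte(W)+t)\ssq\Gamma\ssq\oplam(W+t)$. Since $W$ is proper, $W+t$ is closed with interior $\inte(W)+t$, so $(W+t)\smin(\inte(W)+t)=\partial W+t$; consequently $\oplam(W+t)\smin\oplam(\inte(W)+t)=D_t:=\{l\in L\mid l^*\in(\partial W+t)\cap L^*\}$, and $\Gamma$ decomposes as the disjoint union of the fixed set $\oplam(\inte(W)+t)$ with the subset $\Gamma\cap D_t$ of $D_t$. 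In particular $\Gamma\mapsto \Gamma\cap D_t$ is injective on $\beta^{-1}([0,t]_\cL)$, and it remains to bound the number of subsets of $D_t$ that can occur.

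Next, I would use that the similarity relation with respect to $t$ partitions $(\partial W+t)\cap L^*$ into (at most) $k$ classes $\cC_1\ld\cC_k$. By Lemma~\ref{lem: self similar points yield similar sets}, whenever $l_1^*,l_2^*$ lie in the same class one has $l_1\in\Gamma\Leftrightarrow l_2\in\Gamma$; hence, for every $\Gamma$ in the fibre and every $j$, either $\{l\in D_t\mid l^*\in\cC_j\}\ssq\Gamma$ or this set is disjoint from $\Gamma$. Thus $\Gamma\cap D_t$ is a union of some of these $k$ blocks, so there are at most $2^k$ possibilities, whence $\sharp\beta^{-1}([0,t]_\cL)\leq 2^k$.

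Finally, for the global claims I would combine this with the identity $\sharp\beta^{-1}([s,t]_\cL)=\sharp\beta^{-1}([0,t]_\cL)$ noted at the beginning of the section. If $W$ is self similar, then for each $t$ the number of similarity classes is finite, so every fibre is finite and Lemma~\ref{lem:zeroEntropy}(i) yields $\htop^\xi(\varphi)=0$ for all $\xi\in\T$; if $W$ is $k$-self similar, the number of classes never exceeds $k$, so $\sharp\beta^{-1}(\xi)\leq 2^k$ for all $\xi\in\T$. The only step requiring a little care is the identification of $\oplam(W+t)\smin\oplam(\inte(W)+t)$ with $D_t$ via properness of $W$; after that, everything is an immediate consequence of Lemma~\ref{lem: self similar points yield similar sets}, so I do not anticipate a substantial obstacle here.
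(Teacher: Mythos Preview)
Your proof is correct and follows essentially the same approach as the paper: partition $(\partial W+t)\cap L^*$ into similarity classes and invoke Lemma~\ref{lem: self similar points yield similar sets} together with \eqref{eq:flowMorphism} to bound the fibre by $2^k$, then apply Lemma~\ref{lem:zeroEntropy}. The paper's version is considerably terser---it jumps straight from the partition into classes to the bound without spelling out the decomposition $\Gamma=\oplam(\inte(W)+t)\cup(\Gamma\cap D_t)$ or the bit-encoding---so your write-up is in fact a more complete rendering of the same argument.
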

 Note that in particular this means that if $W$ is perfectly self similar, then
 all fibres contain at most two elements, so that $(\Omega(\Lambda(W)),G)$ is a
 $2$-$1$-extension of $(\T,G)$.
 \begin{proof}
  Fix an arbitrary $\xi = [0,t]_\cL \in \T$. Without loss of generality, we may assume $\xi$
  to be critical. By the self similarity of $W$, there are finitely many
  equivalence classes $E_1^*, \ldots, E_p^*$ such that $(\partial W + t) \cap L^*
  = \bigcup_{i=1}^p E_i^*$.
  Due to Lemma \ref{lem: self similar points yield similar sets}, 
  $\beta^{-1}(\xi)<2^p$.  
  By Lemma~\ref{lem:zeroEntropy}, we obtain $\htop^\xi(\varphi) = 0$.
 \end{proof}
 
 \begin{rem}\label{rem: two-to-one extension with upper and lower graph}
  A perfectly self similar window $W\ssq \R$ (for arbitrary planar CPS
  $(\R,\R,\cL)$) will be constructed in Section~\ref{sec: filling the gaps} (see
  Lemma~\ref{lem: self-similarity of W}). 
 \end{rem}

 \subsection{Windows with locally disjoint complements.}
 In the above considerations, we obtained zero entropy by ensuring that
 $\beta$ has finite fibres.  However, under certain assumptions on $W$, the
 entropy vanishes although the fibres contain infinitely many
 elements. 
 We say $W$ has {\em locally disjoint complements} if for all 
 critical $[0,t]_\cL \in \T$ and $l_1^*, l_2^* \in
 (\partial W + t) \cap L^*$ there exists $\epsilon > 0$ such that
  $$ \left( \left( B_\epsilon(l_1^*) \cap (W+t)^c\right)-l_1^*\right) \cap
 \left( \left( B_\epsilon(l_2^*) \cap (W+t)^c \right)-l_2^* \right) =
 \emptyset. $$
  
 \begin{lem}
  \label{lem:characterisationOfLDCWindows}
  Suppose $(G,H,\cL)$ is a CPS with proper window $W \ssq H$ and $W$ has locally disjoint complements.
  Then for all critical $\xi = [0,t]_\cL \in \T$ there is $\Gamma_+\in \beta^{-1}(\xi)$ such that
  for all $\Gamma\in \beta^{-1}(\xi)$ we have that
  \begin{enumerate}[(i)]
  \begin{multicols}{2}
   \item $\Gamma \ssq \Gamma_+$,
   \item $\Gamma$ differs from $\Gamma_+$ in at most one point.
  \end{multicols}
  \end{enumerate}
 \end{lem}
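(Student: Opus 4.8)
The plan is to take $\Gamma_+:=\oplam(W+t)$ and to prove two things: that $\Gamma_+$ lies in the fibre $\beta^{-1}(\xi)$, and that every element of $\beta^{-1}(\xi)$ misses at most one of its points. Fix a critical $\xi=[0,t]_\cL$, so $(\partial W+t)\cap L^*\neq\emptyset$. By \eqref{eq:flowMorphism}, every $\Gamma\in\beta^{-1}(\xi)$ satisfies $\oplam(\inte(W)+t)\ssq\Gamma\ssq\oplam(W+t)=\Gamma_+$, so (i) will follow once $\Gamma_+\in\beta^{-1}(\xi)$ is established. Moreover $\oplam(W+t)\smin\oplam(\inte(W)+t)$ consists exactly of the ``ambiguous'' points $l\in L$ with $l^*\in(\partial W+t)\cap L^*$, so the difference between any $\Gamma$ and $\Gamma_+$ can only involve such points, and (ii) reduces to showing $\sharp(\oplam(W+t)\smin\Gamma)\le 1$ for all $\Gamma\in\beta^{-1}(\xi)$.

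I would prove this reduction first, since it is where the local disjointness of complements enters in its simplest form. Assume $l_1\neq l_2$ both lie in $\oplam(W+t)\smin\Gamma$; then $l_1^*,l_2^*\in(\partial W+t)\cap L^*$. By \eqref{eq:equivalenceForFibre} write $\Gamma=\jLim\oplam(W+t_j)$ with $t_j\in L^*$, $t_j\to t$. The sets $\oplam(W+t_j)$ are uniformly discrete with a common constant (their windows $W+t_j$ lie in a compact family), so by Remark~\ref{remark: inclusion in fibres} convergence means eventual agreement on bounded sets; since $l_1,l_2\notin\Gamma$ this forces $l_i^*\notin W+t_j$ for $i=1,2$ and all large $j$. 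Subtracting $l_i^*$, this says $t-t_j\in\bigl(B_\eps(l_i^*)\cap(W+t)^c\bigr)-l_i^*$ for both $i$ once $j$ is large, contradicting the disjointness of these two sets guaranteed by the hypothesis. Hence $\sharp(\oplam(W+t)\smin\Gamma)\le 1$; applying this to $\Gamma_+$ (which omits none of its own points) and combining with the sandwiching above gives (ii).

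The main obstacle is to show $\Gamma_+=\oplam(W+t)\in\beta^{-1}(\xi)$. As $\oplam(\inte(W)+t)\ssq\Gamma_+\ssq\oplam(W+t)$ makes $\Gamma_+$ uniformly discrete and relatively dense, by \eqref{eq:flowMorphism} it is enough to check $\Gamma_+\in\Omega(\oplam(W))$, and by \eqref{eq:equivalenceForFibre} this means producing $t_j\in L^*$ with $t_j\to t$ and $\oplam(W+t_j)\to\oplam(W+t)$. Reading convergence again as eventual agreement on bounded sets, one needs a sequence $t_j\to t$ in $L^*$ such that for each of the (at most countably many, as $L^*$ is countable) ambiguous points $l$ one has $l^*\in W+t_j$ eventually, i.e.\ the perturbation $t-t_j$ eventually lands in $\bigl(B_\eps(l^*)\cap(W+t)\bigr)-l^*$ simultaneously for all $l$. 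This is exactly where the full strength of ``locally disjoint complements'' is needed: since the local complements at distinct ambiguous points are pairwise disjoint, the excluded perturbation directions for the different points do not pile up, so — using the properness of $W$ and the density of $L^*$ — one can enumerate the ambiguous points and successively refine the choice of $t_j$ so as to keep more and more of them inside $W+t$; a subsequential limit of $\oplam(W+t_j)$ then lies in $\beta^{-1}(\xi)$ and contains every ambiguous point, hence equals $\oplam(W+t)$. Carrying out this simultaneous refinement for all ambiguous points while remaining in $L^*$ and converging to $t$ is the technical heart of the argument.
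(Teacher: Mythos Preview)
Your reduction and the argument for (ii) are correct and are exactly the core computation in the paper: if $\Gamma\in\beta^{-1}(\xi)$ omitted two ambiguous points $l_1,l_2$, the approximating perturbations $t-t_j$ would eventually lie in both local complements, contradicting their disjointness. Where you diverge from the paper is in getting $\Gamma_+=\oplam(W+t)$ into the fibre, and here your stated plan has a genuine gap.

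Your intuition that ``the excluded perturbation directions do not pile up'' does not follow from pairwise disjointness. Even two disjoint open local complements can cover a full punctured neighbourhood of $0$: in $H=\R$, an ambiguous point sitting at a right endpoint of $W$ gives $C_{l_1}=(0,\eps)$ and one at a left endpoint gives $C_{l_2}=(-\eps,0)$; these are disjoint, yet no nonzero $h=t-t_j$ avoids both. So your ``successive refinement'' to keep finitely many ambiguous points simultaneously inside $W+t_j$ can already fail at two points, and no amount of properness of $W$ or density of $L^*$ rescues it.

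The paper's route avoids this entirely, and it is available to you with no extra work. Instead of trying to force all ambiguous points \emph{inside} $W+t_j$, pick one ambiguous $l_0$ and force it \emph{outside}: since $l_0^*\in\partial(W+t)$ and $W^c$ is open, one can choose $t_j'\in L^*$, $t_j'\to t$, with $l_0^*\in(W+t_j')^c$. Any subsequential limit $\Gamma'$ lies in $\beta^{-1}(\xi)$ and omits $l_0$; but then \emph{your} step (ii) argument shows $\Gamma'$ contains every other ambiguous point, i.e.\ $\Gamma'=\oplam(W+t)\smin\{l_0\}$. Now let $l_0$ range over ambiguous points leaving every ball $B_n$; a subsequential limit of the corresponding $\Gamma'$'s in the compact fibre is $\oplam(W+t)$. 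So the limit you want is a limit of fibre elements each missing one distant point, not of $\oplam(W+t_j)$'s chosen to miss none.
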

 \begin{proof}
  Fix a critical $\xi = [0,t]_\cL \in \T$ and let $l_0^* \in \partial W +t \cap
  L^*$. 
  Due to equations \eqref{eq:flowMorphism} and \eqref{eq:equivalenceForFibre}, there exists $\Gamma' \in
  \beta^{-1}(\xi)$ such that $l_0 \notin \Gamma'$ and a sequence $t_j' \rightarrow t$
  such that $\Gamma' = \jLim \oplam(W+t_j')$ and $l_0^* \in
  (W+t_j')^c$.  
  Now, let $l^* \in (\partial W + t \cap
  L^*) \setminus \{l_0^*\}$.
  Since $W$ has locally disjoint complements, there exists $\epsilon > 0$ such that 
   $$0 \in (B_\epsilon(l_0^*) \cap (W+t_j')^c) - l_0^* \Longrightarrow 0
  \notin (B_\epsilon(l^*) \cap (W+t_j')^c)-l^* $$ for large enough $j$.
  Hence, $l^* \in W+t_j'$ for sufficiently large $j$ which implies $l \in
  \Gamma'$.
  
  As $l_0^*$ was arbitrary, the above yields the existence of a
  sequence $(\Gamma_n)$ in $\beta^{-1}(\xi)$ such that $\{ l \mid l^* \in
  \partial W + t \cap L^*\} \cap B_n \ssq \Gamma_n$.  Compactness of
  $\beta^{-1}(\xi)$ gives a convergent subsequence with limit $\Gamma_+$ which
  verifies (i). (ii) follows immediately.
 \end{proof}
 
 \begin{lem}
  \label{lem:zeroEntropyForFatFibres}
  Let $(G,H,\cL)$ be a CPS with proper window $W \ssq H$. If $W$ has locally
  disjoint complements, we have $\htop^\xi(\varphi) = 0$ for all $\xi \in \T$.
 \end{lem}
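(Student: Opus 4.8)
The plan is to use Lemma~\ref{lem:characterisationOfLDCWindows} to control the fibres of $\beta$ well enough that an $(\epsilon,n)$-spanning argument yields subexponential growth. First I would fix $\xi = [0,t]_\cL \in \T$; if $\xi$ is not critical the fibre is a singleton and there is nothing to prove, so assume $\xi$ critical and let $\Gamma_+ \in \beta^{-1}(\xi)$ be the maximal element provided by Lemma~\ref{lem:characterisationOfLDCWindows}. The key structural input is point (ii) of that lemma: every $\Gamma \in \beta^{-1}(\xi)$ is obtained from $\Gamma_+$ by deleting \emph{at most one} point. Consequently, for a prescribed resolution, two fibre elements that agree near the origin on a large ball are literally determined by which single point (if any) has been removed from $\Gamma_+$ within that ball.

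Next I would translate this into a counting estimate. Recall from Remark~\ref{remark: inclusion in fibres} that on $\beta^{-1}(\xi)$ the metric $d$ is, below the uniform discreteness threshold $r/2$, simply "agreement on the ball $B_{1/\epsilon'}(0)$". Since the $G$-action permutes the fibres — $\varphi(s,\cdot)$ maps $\beta^{-1}(\xi)$ to $\beta^{-1}(s\xi)$, and each such fibre again has the one-point-deletion structure over its own maximal element — a set $S \ssq \beta^{-1}(\xi)$ is $(\epsilon,n)$-spanning as soon as, for each $s \in A_n$, the translated elements $\varphi(s,\zeta)$ are resolved. But $\varphi(s,\Gamma)$ differs from $\varphi(s,\Gamma_+)$ in at most the single translated point, which lies in $B_{1/\epsilon}(0)$ only if the original deleted point lies in $s^{-1}B_{1/\epsilon}(0) = B_{1/\epsilon}(0) - s \ssq A_n + B_{1/\epsilon}(0)$ for $s \in A_n$ (using invariance of $d$). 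Hence the $\epsilon$-ball of $\varphi(s,\zeta)$, as $s$ ranges over $A_n$, is completely determined by the position of the one deleted point relative to the compact set $A_n$, together with the discrete datum "nothing deleted". Since $\Gamma_+$ is uniformly discrete, the number of its points lying in a bounded neighbourhood of $A_n$ grows like $\Theta_G(A_n)$, so $S^{\beta^{-1}(\xi)}(\varphi,\epsilon,n) \leq c\,\Theta_G(A_n) + 1$ for a constant $c = c(\epsilon)$ depending on the packing.

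Finally I would feed this into the definition of fibre entropy: $h^\xi_\epsilon(\varphi) = \limsup_n \frac{1}{\Theta_G(A_n)}\log S^{\beta^{-1}(\xi)}(\varphi,\epsilon,n) \leq \limsup_n \frac{\log(c\,\Theta_G(A_n)+1)}{\Theta_G(A_n)} = 0$, since $\Theta_G(A_n) \to \infty$ along any van Hove sequence. Letting $\epsilon \to 0$ gives $\htop^\xi(\varphi)=0$, as claimed. The main obstacle I anticipate is the bookkeeping in the spanning argument: one must check carefully that the one-point-deletion structure is preserved under the translation action uniformly over $s \in A_n$ (so that a single choice of "which point of $\Gamma_+$, if any, is missing" really does resolve $\zeta$ simultaneously on all the balls $\varphi(s,\cdot)^{-1}B_{1/\epsilon}(0)$), and that the relevant points of $\Gamma_+$ counted are only those in a fixed compact enlargement of $A_n$, whose number is $O(\Theta_G(A_n))$ by uniform discreteness and the doubling-type volume bound for $G$. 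Once that is pinned down, the entropy bound is immediate.
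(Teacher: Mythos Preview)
Your proposal is correct and follows essentially the same approach as the paper: use Lemma~\ref{lem:characterisationOfLDCWindows} to reduce the fibre to $\Gamma_+$ together with its one-point deletions, observe that an $(\epsilon,n)$-spanning set need only record which single point (if any) of $\Gamma_+$ is missing inside the enlargement $A_n + B_{1/\epsilon}(0)$, and then bound this count linearly in $\Theta_G(A_n)$ via uniform discreteness. One terminological slip: the bound $\Theta_G(A_n + B_{1/\epsilon}(0)) = O(\Theta_G(A_n))$ comes from the van Hove property of $(A_n)$, not from a doubling condition on $G$; the paper makes this explicit by writing the enlargement as $A_n \cup \partial^{K_{1/\epsilon + r}}(A_n)$ and invoking the van Hove boundary estimate directly.
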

 \begin{proof}
  Let $\xi = [0,t]_\cL \in \T$ be critical and $(A_n)$ a van Hove sequence in
  $G$.  By Lemma \ref{lem:characterisationOfLDCWindows}, there is $\Gamma_+ \in
  \beta^{-1}(\xi)$ such that every other set $\Gamma \in \beta^{-1}(\xi)$
  differs from $\Gamma_+$ in one point. We denote this point by $l(\Gamma)$.
  For $\epsilon > 0$ and $n \in \N$, define
   \begin{align*}
   S(\varphi,\epsilon,n) &= \{ \Gamma \in \beta^{-1}(\xi) \mid l(\Gamma)
  \in K_{1/\epsilon} +A_n\} \cup \{\Gamma_+\}\\
  &=\{ \Gamma \in \beta^{-1}(\xi) \mid l(\Gamma)
  \in \partial^{K_{1/\epsilon}}(A_n) \cup A_n \} \cup \{\Gamma_+\},
  \end{align*}
  where $K_{r}$ denotes the closed $r$-ball around $0$.
  Observe that $\beta^{-1}(\xi)=  \bigcup_{\Gamma \in S(\varphi,\epsilon,n)} \{
  \Gamma' \in \beta^{-1}(\xi) \mid \max_{s \in A_n}
  d(\varphi_s(\Gamma),\varphi_s(\Gamma')) < \epsilon \}$.  
  Thus,
  $S(\varphi,\epsilon,n)$ is an $(\epsilon,n)$-spanning set for $\beta^{-1}(\xi)$. Further,
   $$ \sharp S(\varphi,\epsilon,n) \leq \frac{1}{\Theta_G(K_r)} \Theta_G
  \left( \partial^{K_{1/\epsilon+r}}(A_n) \cup A_n \right),$$ where $r = \min_{l
    \neq l' \in \Gamma_+} d_G(l,l')/2$. This yields
  \begin{align*}
   h_\epsilon^\xi(\varphi) &\leq \limsup_{n \to \infty}
   \frac{1}{\Theta_G(A_n)} \log \left( \frac{1}{\Theta_G(K_r)} \Theta_G
   \left( \partial^{K_{1/\epsilon+r}}(A_n) \cup A_n \right) \right) \\ &\leq
   \limsup_{n \to \infty} \frac{1}{\Theta_G(A_n)} \log \left(
   \frac{1}{\Theta_G(K_r)} \right) + \frac{1}{\Theta_G(A_n)} \left( \log
   \Theta_G \left(\partial^{K_{1/\epsilon+r}}(A_n) \right) + \log \Theta_G(A_n)
   \right).
  \end{align*}
  Since $A_n$ is a van Hove sequence, we clearly have $\nLim \frac{1}{\Theta_G(A_n)}
  \log \Theta_G(\partial^{K_{1/\epsilon+r}}(A_n)) = 0$ which yields
  $h_\epsilon^\xi(\varphi) = 0$.  Hence, $\htop^\xi(\varphi) = 0$.
 \end{proof}
 
 Let us point out that, in fact, Lemma~\ref{lem:zeroEntropyForFatFibres} 
 readily follows from the next statement if the variational principle holds 
 for continuous actions of $G$ on compact metric spaces.
 
 \begin{lem}\label{lem: locally disjoint complements implies unique ergodicity}
  Let $(G,H,\cL)$ be a CPS with proper window $W \ssq H$ and suppose $W$ has locally
  disjoint complements.
  Then $(\Omega(\oplam(W)),G)$ is uniquely ergodic.
 \end{lem}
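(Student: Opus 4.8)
The plan is to produce a \emph{canonical Borel section} of the torus parametrisation $\beta\:\Omega(\oplam(W))\to\T$ and to show that every $G$-invariant probability measure must be concentrated on its image, where $\beta$ is injective. Recall that $(\T,G)$ is uniquely ergodic with invariant measure $\Theta_\T$, so every $G$-invariant probability measure $\nu$ on $\Omega(\oplam(W))$ satisfies $\beta_*\nu=\Theta_\T$. By Lemma~\ref{lem:characterisationOfLDCWindows}, each fibre $\beta^{-1}(\xi)$ has a (unique) maximal element with respect to inclusion of Delone sets; define $\Gamma_+\:\Omega(\oplam(W))\to\Omega(\oplam(W))$ by letting $\Gamma_+(\Gamma)$ be the maximal element of $\beta^{-1}(\beta(\Gamma))$. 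Since translations preserve inclusion and $\beta$ is equivariant, $\Gamma_+$ is $G$-equivariant; and $\Gamma_+$ is Borel measurable, using the upper semicontinuity of the fibre map $\xi\mapsto\beta^{-1}(\xi)$ exactly as in the proof of Theorem~\ref{t.tame_implies_regular_for_group_actions} (for fixed $l\in G$ the set $\{\Gamma:l\in\Gamma_+(\Gamma)\}=\beta^{-1}(\beta(\{\Gamma':l\in\Gamma'\}))$ is closed). Put $\Omega^+=\{\Gamma:\Gamma_+(\Gamma)=\Gamma\}$, a $G$-invariant Borel set, and $\Omega^-=\Omega(\oplam(W))\smin\Omega^+$. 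Then $\beta|_{\Omega^+}$ is a continuous bijection onto $\T$ (two maximal elements over the same $\xi$ coincide, and every fibre has a maximal element), so by the Lusin--Souslin theorem $\beta|_{\Omega^+}$ is a Borel isomorphism; write $\sigma=(\beta|_{\Omega^+})^{-1}\:\T\to\Omega^+$ for its inverse, which is $G$-equivariant.

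The key step is to exclude invariant mass on $\Omega^-$. By Lemma~\ref{lem:characterisationOfLDCWindows}(ii), every $\Gamma\in\Omega^-$ satisfies $\Gamma=\Gamma_+(\Gamma)\smin\{l(\Gamma)\}$ for a uniquely determined point $l(\Gamma)\in G$, and $\Gamma\mapsto l(\Gamma)$ is Borel. From the relation $\varphi_u(\Gamma)=\Gamma-u$ together with the equivariance of $\Gamma_+$ one reads off $l(\varphi_u\Gamma)=l(\Gamma)-u$ for all $u\in G$. Hence, if $\nu'$ were a $G$-invariant probability measure with $\nu'(\Omega^-)=1$, then $P:=l_*\nu'$ would be a translation-invariant Borel probability measure on $G$. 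This is impossible: $G$ is non-compact, and by uniqueness of Haar measure any nonzero translation-invariant Borel measure on $G$ is a multiple of Haar measure and therefore has infinite total mass.

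To conclude, let $\nu$ be an arbitrary $G$-invariant probability measure on $\Omega(\oplam(W))$; at least one such exists, for instance $\nu=\sigma_*\Theta_\T$ (which is invariant by equivariance of $\sigma$). Since $\Omega^+$ and $\Omega^-$ are $G$-invariant, the restrictions $\nu|_{\Omega^+}$ and $\nu|_{\Omega^-}$ are again $G$-invariant, so the previous paragraph forces $\nu|_{\Omega^-}=0$; thus $\nu$ is concentrated on $\Omega^+$. As $\beta|_{\Omega^+}$ is a Borel isomorphism onto $(\T,\Theta_\T)$ with inverse $\sigma$, we get $\nu=\sigma_*(\beta_*\nu)=\sigma_*\Theta_\T$. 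Since $\nu$ was arbitrary, $(\Omega(\oplam(W)),G)$ is uniquely ergodic. I expect the only genuinely delicate point to be the measurability bookkeeping --- Borel measurability of $\Gamma_+$, Borelness of $\Omega^+$, the application of Lusin--Souslin, and measurability of the missing-point map $l$ --- whereas the non-existence of a translation-invariant probability measure on the non-compact group $G$ and the rest of the argument are soft.
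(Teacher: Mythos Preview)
Your proof is correct and takes a genuinely different route from the paper's. The paper proceeds via Lindenstrauss' pointwise ergodic theorem: given two ergodic measures $\mu_1,\mu_2$ and $f\in\cC(\Omega(\oplam(W)))$, the Birkhoff averages $A_n(f,\Gamma)$ converge to $\mu_i(f)$ for $\mu_i$-a.e.\ $\Gamma$; since by Lemma~\ref{lem:characterisationOfLDCWindows}(ii) any two elements of a fibre differ in at most one point, their Birkhoff averages along a van Hove sequence coincide, and because $\beta_*\mu_1=\beta_*\mu_2=\Theta_\T$ the projected generic sets $\beta(\Omega_1^f),\beta(\Omega_2^f)$ must intersect, forcing $\mu_1(f)=\mu_2(f)$. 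Your argument is instead structural: you build the canonical section $\sigma$ and rule out invariant mass off its graph via the clean observation that the missing-point map satisfies $l(\varphi_u\Gamma)=l(\Gamma)-u$, so any invariant probability on $\Omega^-$ would push forward to a translation-invariant probability on the non-compact group $G$. This identifies the unique invariant measure explicitly as $\sigma_*\Theta_\T$ --- in effect anticipating the invariant-graph discussion the paper carries out later in Section~\ref{Spectra} --- at the price of the descriptive-set-theoretic bookkeeping you flag (which the paper handles, for its own later purposes, via Lusin's theorem rather than Lusin--Souslin, yielding measurability in the completion; this is enough for your argument too). The paper's route avoids that bookkeeping but relies on the pointwise ergodic theorem for amenable actions together with a Stone--Weierstrass reduction to a separating family of test functions.
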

 \begin{proof}
%
%
%
  Let $\nfolge{A_n}$ be a tempered van Hove sequence in $G$ and suppose there exist two invariant ergodic measures
  $\mu_1,\mu_2$ on $\Omega(\oplam(W))$.
  Given $f\in \mathcal C(\Omega(\oplam(W)))$ and $i\in\{1,2\}$, 
  Lindenstrauss' Pointwise Ergodic Theorem 
  \cite[Theorem~1.2]{Lindenstrauss2001} yields
  a subset $\Omega_i^f \ssq \Omega(\oplam(W))$ of full $\mu_i$-measure such that 
  for all $\Gamma \in \Omega_i^f$
  \begin{align}\label{eq: Birkhoff average equals space average}
   A_n(f,\Gamma)\coloneqq \frac{1}{\Theta_G(A_n)} \int_{A_n} f(\Gamma-s) \ \mathrm{d}\Theta_G(s) 
  \stackrel{n\to\infty}{\xrightarrow{\hspace*{1.2cm}}}
  \mu_i(f)\coloneqq\int_{\Omega(\oplam(W))}\!f \,d\mu_i.
  \end{align}
  
  We want to show that \eqref{eq: Birkhoff average equals space average} holds
  for all $\Gamma\in\beta^{-1}(M_i^f)$, where $M_i^f=\beta(\Omega_i^f)$.
  To that end, given $g_0\in G$ and $\eps>0$, consider
  \[f_{g_0;\eps}\: \Omega(\oplam(W))\to \R, \quad f_{g_0;\eps}(\Gamma)=
   \max\left\{0,\, 1-1/\eps\cdot \min_{g\in \Gamma}d(g,B_\eps(g_0))\right\}.
   \]
   Clearly, 
   $f_{g_0;\eps}$ is continuous and the set $\cF=\{f_{g_0;\eps}\: g_0\in G,\, \eps>0\}$ separates points
   and contains the constant function equal to $1$.
    Its algebraic closure $\cF'$ is hence dense in $\cC(\Omega(\oplam(W)))$, due to the Stone-Weierstrass Theorem.

    Now, given $f\in \cF$,
   Lemma~\ref{lem:characterisationOfLDCWindows} immediately yields
   \begin{align}\label{eq: equality of Birkhoff averages within fibres}
     \nLim A_n(f,\Gamma') =\nLim A_n(f,\Gamma) \qquad(\Gamma \in \Omega_i^f(\oplam(W)),\, \Gamma'\in\beta^{-1}(\beta(\Gamma))).
   \end{align}
   Observe that \eqref{eq: equality of Birkhoff averages within fibres} straightforwardly extends to all $f\in \cF'$ and 
   thereby, in fact,
   to all $f\in \overline{\cF'}=\cC(\Omega(\oplam(W)))$.
   This shows \eqref{eq: Birkhoff average equals space average} for all $\Gamma\in \beta^{-1}(M_i^f)$ with 
   $f\in \cC(\Omega(\oplam(W)))$.
  
  Since $\beta$ sends $\mu_1$ and $\mu_2$ to the unique invariant measure $\Theta_\T$ on $\T$, we clearly have $M_1^f\cap M_2^f\neq \emptyset$.
  Hence, $\mu_1(f)=\lim_{n\to\infty} A_n(f,\Gamma)=\mu_2(f)$ 
  for all $\Gamma \in \beta^{-1}(M_1^f\cap M_2^f)$.
  Since $f\in \cC(\Omega(\oplam(W)))$ was arbitrary, this shows $\mu_1=\mu_2$ and thus finishes the proof. 
 \end{proof}
 
 \begin{rem}
  In Section~\ref{sec: filling the gaps}, we construct a planar CPS with window $V \ssq \R$ with
  locally disjoint complements.  
  For an (implicit) application of the
  criterion of locally disjoint complements outside the setting of Euclidean CPS, see
  \cite[Example~5.1]{DownarowiczKasjan2015}.
\end{rem}

\section{Construction of a self-similar window for planar CPS} \label{MainConstruction}
The main goal of this section is to show that for each \emph{planar} CPS $(\R,\R,\mc L)$, there are irredundant
windows with boundaries of positive Lebesgue measure which are 
self-similar and which have locally disjoint complements.
By means of the results of the previous section, this proves Theorem~\ref{t.counterexamples}.
Moreover, given any \emph{higher dimensional} CPS $(\R^n,\R,\mc L)$, we show that there are
windows such that the associated Delone dynamical system has zero topological entropy.

\subsection{Planar CPS and irrational rotations.}
For the constructions in this section, it is important to note that the set $L^*$ is generated 
by an irrational circle rotation: observe that for each irrational lattice 
$\cL \ssq \R^2$ there exists a matrix $A = \begin{pmatrix} a_{11} & a_{12} \\ a_{21} & a_{22} \end{pmatrix} \in \GL(2,\R)$ 
with $a_{11}/a_{12}, a_{21}/a_{22} \in \R \setminus \Q$ such that $\cL = A(\Z^2)$.
Put $\omega= a_{21}$. 
Without loss of generality, we may assume $a_{22} = 1$. Thus 
 $$ L^* = \pi_2(\cL) = \left\{ n\omega+m \mid (n,m) \in \Z^2 \right\} = \pi^{-1}( \{ n\omega \mod 1 \mid n \in \Z \}), $$
where $\pi : \R \to\T^1$ denotes the canonical projection onto $\T^1= \R / \Z$ and $\pi_2\:\R^2\to \R$ is the projection to the second coordinate.

As seen in Section~\ref{Self-similarity}, the entropy of the Delone dynamical system $(\Omega(W),\R)$
is related to the local structure of $W+t$ at points in $L^* \cap \partial W + t$. 
Given $t \in \R$, if $W \ssq [0,1]$, then a point in $L^* \cap \partial W+t$ corresponds to some $n\in \Z$ with $n \omega-t \mod 1 \in \partial W$.
Thus, a self-similar window $W\ssq [0,1]$ for the CPS $(\R,\R,\mc L)$ can be understood as
a subset $W\ssq \T^1$ such that for all orbits $\mc O(x)=x+\omega \Z$ (of the rotation on $\T^1$ by angle $\omega$)
there are finitely many $n_1,\ldots,n_N\in \Z$ such that for all $y=x+n\omega\in \partial W\cap \mc O(x)$ 
there is $i\in \{1,\ldots,N\}$ and $\eps>0$ with
$(B_\eps(y)\cap W)+(n_i-n)\omega=B_\eps(x+n_i\omega)\cap W$. 
Consistently with the terminology in Section~\ref{Self-similarity}, we call a subset of $\T^1$ with this property \emph{self-similar}.

In the following, we fix $\w \in \T^1\setminus \Q$ and denote
by $\f$ the rotation by $\w$ on $\T^1$, that is, $\f(x)=x+\w$.
Without loss of generality, we may assume $|\w|<1/2$.
We set $q_1=\min\{\ell\in\N\:d(\f^\ell(0),0)<|\w|\}$
and define the sequence of \emph{closest return times} $(q_n)_{n\in\N}$ recursively by putting
$q_{n+1}=\min\{\ell\in\N\:d(\f^\ell(0),0)<d(\f^{q_n}(0),0)\}$. 
We further set $I_n$ to be the closed interval of length $|I_n|=d(\f^{q_n}(0),0)$ with endpoints 
$0$ and $\f^{q_n}(0)$.
Our construction makes use of the following well-known facts (see, e.g., \cite{Swierczkowski1959}, \cite[Chapter~I.1]{deMelovanStrien1993} and \cite[Theorem~4.5]{Kurka2003}).
\begin{prop}\label{prop: properties of partition by closest return intervals}
 Given an irrational rotation $\f$ on $\T^1$, 
 let $\mc P_n=\{\f^j(I_n)\:1\leq j\leq q_{n+1}\}\cup \{\f^j(I_{n+1})\: 1\leq j\leq q_n\}$,
 where $q_n$ and $I_n$ are defined as above.
 Then
 \begin{enumerate}[(i)]
  \item $\T^1=\bigcup_{J\in \mc P_n} J$ and $\mathring{J_1}\cap \mathring{J_2}=\emptyset$ for each $J_1\neq J_2\in \mc P_n$;
  \item For each $J\in\mc P_n$ and each $m>n$, there is $\mc Q_{J,m} \ssq \mc P_{m}$
  such that $J=\bigcup_{K\in \mc Q_{J,m}}K$;
  \item If $J,J'\in \mc P_n$ and $J =\f^\ell(J')$ for some $\ell\in \N$, then
  $\mc Q_{J,m}=\f^\ell(\mc Q_{J',m})(=\{\f^\ell(K)\: K\in \mc Q_{J',m}\})$ for all $m> n$.
  \end{enumerate}
\end{prop}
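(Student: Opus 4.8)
The plan is to reduce all three statements to the classical combinatorics of the rotation orbit $\{j\w\bmod1:j\ge0\}$; they are, in essence, contained in the references cited in the statement, but here is how I would organise a self-contained argument and where the real work lies. The central observation is that $\mc P_n$ coincides, up to the single rotation $\f$ responsible for the index range $1\le j\le q_{n+1}$ rather than $0\le j\le q_{n+1}-1$, with the partition $\mc D_{N_n}$, $N_n:=q_n+q_{n+1}$, of $\T^1$ into the $N_n$ closed arcs cut out by $0,\w\ld(N_n-1)\w$. So I would first show, by induction on $n$, that $\f^{-1}(\mc P_n)=\mc D_{N_n}$, i.e.\ that the arcs of $\mc D_{N_n}$ are precisely the $q_{n+1}$ translates $\f^j(I_n)$ ($0\le j\le q_{n+1}-1$) of length $|I_n|$, together with the $q_n$ translates $\f^j(I_{n+1})$ ($0\le j\le q_n-1$) of length $|I_{n+1}|$; this is the three-distance theorem in its sharp form, valid at $N=q_n+q_{n+1}$, where exactly two arc-lengths occur. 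Along the way one records the usual continued-fraction recursions $q_{n+1}=a_{n+1}q_n+q_{n-1}$ and $|I_{n-1}|=a_{n+1}|I_n|+|I_{n+1}|$ (with the conventions $q_{-1}=0$, $q_0=1$, $|I_0|=|\w|$, and integers $a_k\ge1$), the alternation of sides ($I_n$ and $I_{n+1}$ lie on opposite sides of $0$), and the normalisation $q_{n+1}|I_n|+q_n|I_{n+1}|=1$; the inductive step then tracks how the points $p\w$ with $N_n\le p<N_{n+1}$ successively subdivide the arcs of $\mc D_{N_n}$.

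Granting this identification, part (i) is immediate, as $\mc D_{N_n}$ is a partition of $\T^1$ into arcs. If one wishes to avoid the sharp form of the three-distance theorem, it suffices to note that by the normalisation identity the $q_{n+1}+q_n$ arcs of $\mc P_n$ have total length $1$, so one only has to verify pairwise disjointness of their interiors: an interior overlap of $\f^i(I_n)$ and $\f^{i'}(I_n)$ with $0<|i-i'|<q_{n+1}$ would force $d(\f^{i-i'}(0),0)<|I_n|=d(\f^{q_n}(0),0)$, contradicting the minimality of $q_{n+1}$ among the return times below $|I_n|$, and the analogous estimates --- using in addition the inequalities $d(\f^\ell(0),0)\ge|I_k|$ for $\ell<q_{k+1}$ and the alternation of sides --- rule out overlaps inside the $I_{n+1}$-tower and between the two towers. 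Part (ii) is then also immediate: $N_m\ge N_n$ for $m\ge n$, and $\mc D_{N_m}$ arises from $\mc D_{N_n}$ by inserting further orbit points, hence refines it; applying $\f$, $\mc P_m$ refines $\mc P_n$, and one sets $\mc Q_{J,m}=\{K\in\mc P_m:K\ssq J\}$.

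For part (iii) the content is that this refinement is equivariant under the rotation. Passing to the $\mc D$-picture, let $J=\f^\ell(J')$ with $J,J'\in\mc D_{N_n}$; since $\f^\ell$ is an isometry, $J$ and $J'$ have the same length, so either both are translates $\f^{j_1}(I_n),\f^{j_2}(I_n)$ with $j_1,j_2\in\{0\ld q_{n+1}-1\}$ and $\ell=j_1-j_2$, or both are translates of $I_{n+1}$ with the analogous indices in $\{0\ld q_n-1\}$. It therefore suffices to prove $\mc Q_{\f^{j}(I_n),m}=\f^{j}\,\mc Q_{I_n,m}$ for all $j\in\{0\ld q_{n+1}-1\}$ (and the same statement with $I_{n+1}$). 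Writing $B=\{k\w:0\le k<N_m\}$ for the breakpoint set of $\mc D_{N_m}$ and applying $\f^{-j}$, this becomes the assertion that $B$ and $\f^{-j}(B)=\{k\w:-j\le k<N_m-j\}$ induce the same partition of $\inte(I_n)$; since the two index windows $[0,N_m)$ and $[-j,N_m-j)$ differ only in the blocks $\{-j\ld-1\}$ and $\{N_m-j\ld N_m-1\}$, this reduces to two statements: that no point $\f^{-s}(0)$ with $1\le s\le j$, and no point $\f^{N_m-s}(0)$ with $1\le s\le j$, lies in $\inte(I_n)$. The first follows at once from the definition of the closest-return times, since $1\le s\le j<q_{n+1}$ gives $d(\f^{-s}(0),0)=d(\f^{s}(0),0)\ge|I_n|$ whereas every point of the open arc $\inte(I_n)$ has distance $<|I_n|$ from $0$.

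The second statement --- that the last $j$ breakpoints of $\mc D_{N_m}$ also avoid $\inte(I_n)$ --- is where the argument requires genuine care, and I expect it to be the main obstacle. Plain distance estimates do not suffice: the points $\f^{N_m-s}(0)$ are small perturbations of the family $\f^{-s}(0)$ by the tiny vector $\f^{N_m}(0)$ (of size $<|I_{n+1}|$), which could in principle move a point across $0$, so one genuinely needs the finer structure of the partitions. The cleanest way to package this is the substitutive structure of the rotation: the one-step refinement from $\mc D_{N_m}$ to $\mc D_{N_{m+1}}$ subdivides each longest arc into $a_{m+2}$ arcs of length $|I_{m+1}|$ and one arc of length $|I_{m+2}|$ in a fixed cyclic pattern, leaves the shorter arcs intact, and does so $\f$-equivariantly, so that all arcs of $\mc D_{N_m}$ of a given length are refined in the same manner and as translates of one another. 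This is part of the Ostrowski/Sturmian description of circle rotations and can be quoted from the cited references; alternatively it can be proved by a separate induction on $m$, whose inductive step amounts to the combinatorial analysis of which arc each newly inserted point $p\w$ ($N_m\le p<N_{m+1}$) splits and in which position. Once part (iii) is known for the arcs of a single $\mc D_{N_n}$, its general form for $J,J'\in\mc P_n$ follows by composing these equivariances along the refinement chain $\mc P_n\preceq\mc P_{n+1}\preceq\dots\preceq\mc P_m$.
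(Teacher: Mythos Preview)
The paper does not give its own proof of this proposition; it is stated as a collection of well-known facts with references to \'Swierczkowski, de~Melo--van~Strien, and K\r{u}rka. Your sketch is a correct reconstruction of the standard argument: the identification of $\mc P_n$ (up to one rotation) with the partition of $\T^1$ by the first $q_n+q_{n+1}$ orbit points, the sharp three-distance theorem (or the total-length identity $q_{n+1}|I_n|+q_n|I_{n+1}|=1$ combined with disjointness) for (i), refinement of the breakpoint sets for (ii), and the substitutive/Ostrowski structure for the equivariance in (iii). Your flagging of the ``second statement'' in (iii) --- that the last $j$ breakpoints of $\mc D_{N_m}$ avoid $\inte(I_n)$ --- as the genuinely delicate point is accurate; this is exactly where the combinatorics of the one-step refinement $\mc D_{N_m}\to\mc D_{N_{m+1}}$ is needed, and it is what the cited references supply. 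Since the paper simply quotes the result, there is no alternative approach to compare against; your write-up would serve as a self-contained substitute for those citations.
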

\begin{rem}
 In simple terms, (i) gives that the elements of $\mc P_n$ basically partition $\T^1$ and (ii) yields
 that the partition by elements of $\mc P_{n+1}$ is a refinement of that given by $\mc P_n$.
 Point (iii) is to be understood as a self-similarity of the respective partitions.
\end{rem}

\begin{rem}
Our goal is to construct a proper set $W$ (which we want to be self-similar) and another proper set
$V$ (with locally disjoint complements) whose boundaries are irredundant and verify
$\textrm{Leb}_{\T^1}(\partial W),\, \textrm{Leb}_{\T^1}(\partial V)>1-\eps$
for $0<\eps<1$.

To that end, we first construct an irredundant self-similar Cantor set $C$ as the limit
of a nested sequence $(C_\ell)$ of recursively defined compact subsets of $\T^1$.
At each step $\ell$ of the construction, the set $C_{\ell}$ is obtained by removing elements of $\mc P_{n_\ell}$ from $C_{\ell-1}$ 
(for some appropriately chosen increasing sequence $(n_\ell)$) so that by Proposition~\ref{prop: properties of partition by closest return intervals} (ii), $C_{\ell}$ is
a union of intervals from $\mc P_{n_\ell}$.
To establish the self-similarity of the limit set $C$, we treat
the intervals which comprise $C_{\ell-1}$ equally.
That is, roughly speaking, if $J_1,J_2\in \mc P_{n_{\ell-1}}$ with $J_1,J_2\ssq
C_{\ell-1}$ are translated copies of each other, say $\f^n (J_1)=J_2$, then we
keep $J\in \mc Q_{J_1,n_{\ell}}$ in $C_\ell$ if and only if $\f^n(J)\in \mc
Q_{J_2,n_{\ell}}$ is kept in $C_\ell$.  Eventually, the limit set $C$ will serve
as the boundary of both $W$ and $V$.  We will obtain $W$ by filling the gaps of
$C$ in such a way that the self-similarity of $C$ is preserved while it has to
be destroyed in a particular way in order to obtain $V$.

We note that the simple idea of {\em `treating all partition intervals of equal
  length in $C_\ell$ equally in all subsequent steps`} rather easily leads to self
similar windows with at most two similarity classes for each $t\in\R$ (see Remark~\ref{rem: simpler self-similar boundary} below),
and hence
at most four elements in every fibre. The construction presented below is
somewhat more subtle, as it includes some refinements in order to produce
\emph{perfectly} self similar windows with a Cantor set boundary.
\end{rem}

\subsection{Construction of a self-similar Cantor set}
Given $0<\eps<1$, pick a sequence $(\beta_\ell)$ of positive numbers with
$\sum_{\ell=1}^\infty 3\beta_\ell<\eps$ and let $(n_\ell)$ be a sequence of
positive integers with $|I_{n_\ell+1}|/|I_{n_{\ell+1}}|>1/\beta_\ell$.  For
technical reasons, we may assume without loss of generality
that $n_{\ell+1}\geq n_\ell+6$.
In particular, this yields
$\#Q_{J,n_{\ell+1}}\geq 8$ for each $\ell\in \N$ and $J\in \mc P_{n_\ell}$.

We recursively define a nested sequence of compact sets $(C_\ell)\ssq \T^1$ whose
limit will be a Cantor set $C$ satisfying the desired self-similarity condition.
To that end, let us introduce some terminology.
Suppose we have already constructed $C_\ell\ssq C_{\ell-1}\ssq \ldots \ssq C_1= \T^1$.
We call a connected component $J$ of the complement $C_\ell^c$
a \emph{gap} of $C_\ell$ and we say $J$ is of \emph{level $k$} (with $k\in\{2,3,\ldots \ell\}$) if $J\cap C_k^c\neq \emptyset$ and $J\cap C_{k-1}^c= \emptyset$.
We further call an interval $J\in\mc P_{n_\ell}$ with $J\ssq C_\ell$ \emph{$k$-accessible from the left/right}
if its left/right\footnote{In any of the two, from now on fixed, orientations on $\T^1$.} endpoint is
at the boundary of a gap of $C_\ell$ which is of level $k$.
It is worth mentioning that we will construct $C_\ell$ $(\ell\in \N)$ in such a way that each $J\in \mc P_{n_\ell}$ is 
accessible from at most one side. 
Given $C_\ell$, we obtain $C_{\ell+1}$ by removing from $C_\ell$
\begin{enumerate}[(1)]
 \item the interior of the two left-most/right-most intervals as well as the interior of the right-most/left-most
 interval of $\mc Q_{J,n_{\ell+1}}$ if
 $J\in \mc P_{n_\ell}$ is $k$-accessible from the left/right and $\ell-k$ is even;
 \item the interior of the left-most and the right-most interval
of $\mc Q_{J,n_{\ell+1}}$ for all $J\in \mc P_{n_\ell}$ which haven't been dealt with in (1);
\item all isolated points which remain after having removed intervals according to (1) \& (2).
\end{enumerate}

Put $C=\bigcap_{\ell} C_\ell$.
Observe that $C$ is a Cantor set of positive Lebesgue measure as it is compact, nowhere dense 
(since $\mc O^+(0)=\{\f^n(0)\:n\in \N\}\ssq C^c$),
and
\[
 \Leb_{\T^1}(C)=\lim_{\ell\to\infty}  \Leb_{\T^1}(C_\ell)\geq 1-\sum_{\ell=1}^\infty 3\beta_\ell>1-\eps.
\]
Moreover, it turns out that $C$ is irredundant (see Section~\ref{sec: filling the gaps}).

\begin{rem}\label{rem: simpler self-similar boundary}
 Coming back to the last paragraph of the previous section, 
 observe that in order to provide a self-similar Cantor set, we could
 simply follow step (2) and (3) but this time applying (2) to \emph{every} $J\in \mc P_{n_\ell}$.
 Let us denote the resulting Cantor set of this simplified construction by $\tilde C$.
 In principle, we could replace $C$ by $\tilde C$ in the following. 
 As a matter of fact, this would not change the proofs of some of the next statements 
 (in particular, Lemma~\ref{lem: sim is an equivalence relation} and Lemma~\ref{lem: self similar cantor set})
 while the proof of Lemma~\ref{lem: along one orbit all points on C are equivalent} would even be shortened.
 However, as we point out in Remark~\ref{rem: simpler self-similar window} below, $\tilde C$ can't be the boundary of a 
 perfectly self-similar window, that is,
 there are fibres of the factor map $\beta$ from $(\Omega(\tilde W),\R)$ onto $(\T^1,\f)$
 with more than two elements.
\end{rem}

Before we turn to the construction of the sets $W$ and $V$, let us study $C$ locally along orbits.
Given $x\in\T^1$, $n\in \Z$ and $\ell\in \N$, we write $x\sim_\ell \f^n(x)$ 
whenever 
$x,\f^n(x)\notin \bigcup_{j=1}^{|n|}\f^j(I_{n_\ell}\cup I_{n_{\ell}+1})$
and if there are $1\leq j_0,j_1\leq q_{n_\ell+1-i}$, with $i$ either $0$ or $1$, such that
\begin{enumerate}[(a)]
\item  $x\in \textrm{int}\,\f^{j_0}(I_{n_\ell+i})$ and $\f^n(x)\in \textrm{int}\,\f^{j_1}(I_{n_\ell+i})$;
\item $\f^{j_0}(I_{n_\ell+i})$ and $\f^{j_1}(I_{n_\ell+i})$ are from the same side $k$- and $k'$-accessible, respectively,
with $k-k'$ even or $\f^{j_0}(I_{n_\ell+i})$ and $\f^{j_1}(I_{n_\ell+i})$ are not accessible at all.
\end{enumerate}

In the following, the reader should keep in mind that, by construction, $\mc
O^+(0)\cap C=\emptyset$ so that for each $x\in C$ we have that $x\in
\f^{j_0}(I_{n_\ell+i})$ actually means $x\in
\textrm{int}\,\f^{j_0}(I_{n_\ell+i})$.
\begin{lem}\label{lem: sim is an equivalence relation}
 Consider $x\in C$.
 If $x\sim_\ell\f^n(x)$ for some $\ell\in \N$ and $n\in \Z$ with $\f^n(x)\in C_\ell$, then 
 $\f^n(x)\in C$.
\end{lem}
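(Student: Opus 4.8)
Throughout, recall that $\mc O^+(0)\cap C=\emptyset$, so every point of $C$ lies in the interior of a \emph{unique} interval of each partition $\mc P_{n_m}$, and that --as one checks from the construction by a routine induction on $m$-- whenever $y\in C_m\smin\mc O^+(0)$ the $\mc P_{n_m}$-interval containing $y$ is contained in $C_m$. Since the sets $C_m$ are nested and $\f^n(x)\in C_\ell$ by hypothesis, it suffices to show $\f^n(x)\in C_m$ for all $m\ge\ell$; moreover we may assume $n>0$, the case $n<0$ being symmetric (using the backward orbit of $x$) and $n=0$ trivial. The plan is to follow, through the recursive construction of the $C_m$, the partition intervals containing $x$ and $\f^n(x)$. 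Concretely, I will prove by induction on $m\ge\ell$ that there are intervals $J^{(m)},K^{(m)}\in\mc P_{n_m}$ with $x\in\inte J^{(m)}$ and $\f^n(x)\in\inte K^{(m)}$, with $J^{(m)},K^{(m)}\ssq C_m$, with $\f^n(J^{(m)})=K^{(m)}$, and such that $J^{(m)}$ and $K^{(m)}$ are either both non-accessible in $C_m$ or both accessible from the same side with accessibility levels congruent modulo $2$. Granting this, $K^{(m)}\ssq C_m$ for every $m\ge\ell$ forces $\f^n(x)\in\bigcap_m C_m=C$, as claimed.

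For the base case $m=\ell$, the relation $x\sim_\ell\f^n(x)$ supplies $i\in\{0,1\}$ and $1\le j_0,j_1\le q_{n_\ell+1-i}$ with $x\in\inte\f^{j_0}(I_{n_\ell+i})$ and $\f^n(x)\in\inte\f^{j_1}(I_{n_\ell+i})$; set $J^{(\ell)}=\f^{j_0}(I_{n_\ell+i})$ and $K^{(\ell)}=\f^{j_1}(I_{n_\ell+i})$. Since $x,\f^n(x)\in C_\ell$, the opening remark yields $J^{(\ell)},K^{(\ell)}\ssq C_\ell$, and the required accessibility property is exactly part (b) in the definition of $\sim_\ell$. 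It remains to obtain $\f^n(J^{(\ell)})=K^{(\ell)}$, and here the hypothesis $x,\f^n(x)\notin\bigcup_{j=1}^{|n|}\f^j(I_{n_\ell}\cup I_{n_\ell+1})$ enters: it forces the orbit segment from $x$ to $\f^n(x)$ to avoid $I_{n_\ell}\cup I_{n_\ell+1}$, the base of the tower underlying $\mc P_{n_\ell}$. By the tower structure (Proposition~\ref{prop: properties of partition by closest return intervals} and the classical references cited there), $\f$ therefore acts on the column containing $x$ as the rigid shift $\f^r(I_{n_\ell+i})\mapsto\f^{r+1}(I_{n_\ell+i})$ at each of the $n$ steps --reaching the top of the column would push the next iterate into the base-- so that $\f^n(J^{(\ell)})=\f^{j_0+n}(I_{n_\ell+i})$ is again an interval of $\mc P_{n_\ell}$ containing $\f^n(x)$ in its interior. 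Since $\f^{j_1}(I_{n_\ell+i})$ is such an interval too, Proposition~\ref{prop: properties of partition by closest return intervals}(i) forces $\f^n(J^{(\ell)})=\f^{j_1}(I_{n_\ell+i})=K^{(\ell)}$.

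For the inductive step, assume the statement at level $m$. By Proposition~\ref{prop: properties of partition by closest return intervals}(ii), $x$ lies in the interior of a unique $\hat K\in\mc Q_{J^{(m)},n_{m+1}}$, and since $x\in C\ssq C_{m+1}$ the interior of $\hat K$ is not deleted in passing from $C_m$ to $C_{m+1}$. From $\f^n(J^{(m)})=K^{(m)}$ and Proposition~\ref{prop: properties of partition by closest return intervals}(iii) we get $\mc Q_{K^{(m)},n_{m+1}}=\f^n\big(\mc Q_{J^{(m)},n_{m+1}}\big)$, with $\f^n$ (an orientation-preserving rotation) sending the $p$-th interval of one family, counted from a fixed side, onto the $p$-th interval of the other. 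Using the accessibility hypothesis at level $m$, the deletion rules (1)--(3) remove sub-intervals in the \emph{same positions} from $\mc Q_{J^{(m)},n_{m+1}}$ as from $\mc Q_{K^{(m)},n_{m+1}}$: if both parents are non-accessible, rule (2) applies to each; if both are $k$- resp.\ $k'$-accessible from the same side with $k\equiv k'\bmod2$, then $m-k\equiv m-k'\bmod2$, so that either rule (1) applies to both or rule (2) applies to both. Hence $\f^n(\hat K)$ is not deleted either. Putting $J^{(m+1)}=\hat K$ and $K^{(m+1)}=\f^n(\hat K)$ gives $x\in\inte J^{(m+1)}$, $\f^n(x)\in\f^n(\inte\hat K)=\inte K^{(m+1)}$ and $\f^n(J^{(m+1)})=K^{(m+1)}$; since $\inte\f^n(\hat K)\ssq K^{(m)}\ssq C_m$ is not deleted, we also get $\f^n(x)\in C_{m+1}$, whence $J^{(m+1)},K^{(m+1)}\ssq C_{m+1}$ by the opening remark. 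Finally, the accessibility property at level $m+1$ follows by tracking gaps through rules (1)--(3): on a side that is already accessible the gap level is preserved, while any newly created accessibility has level $m+1$; since $\hat K$ and $\f^n(\hat K)$ occupy corresponding positions, they receive the same accessibility type. This closes the induction and proves the lemma.

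The step I expect to be the main obstacle is this last one --verifying that a matched accessibility configuration at level $m$ descends to a matched configuration for the two corresponding children at level $m+1$. Carrying it out demands a careful case analysis of the three deletion rules together with bookkeeping of the level of every gap created or enlarged; the decisive observation is that the choice between rule (1) (two deletions on the accessible side) and rule (2) (one deletion) is governed solely by the parity of $m-k$, so that the congruence $k\equiv k'\bmod2$ is carried along rather than broken.
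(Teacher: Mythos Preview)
Your argument is correct and follows essentially the same route as the paper: an induction on the level $m\ge\ell$ showing that the partition intervals containing $x$ and $\f^n(x)$ stay matched (same length, translated by $\f^n$, same accessibility type up to parity), so that the deletion rules (1)--(3) act identically on both. The paper packages this more tersely by proving directly that $x\sim_\ell\f^n(x)$ and $\f^n(x)\in C_\ell$ imply $x\sim_{\ell+1}\f^n(x)$ and $\f^n(x)\in C_{\ell+1}$, whereas you unwrap $\sim_m$ into the explicit interval pair $J^{(m)},K^{(m)}$ and carry $\f^n(J^{(m)})=K^{(m)}$ along instead of the avoidance condition; but the content is the same, and your identification of the accessibility bookkeeping as the crux is exactly right.
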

\begin{proof}
 Let $j_0,j_1$ be as above.
 Observe that $j_0+n=j_1$ because of Proposition~\ref{prop: properties of partition by closest return intervals}~\!(i)
 and because $x,\f^n(x)\notin \bigcup_{j=1}^{|n|}\f^j(I_{n_\ell}\cup I_{n_{\ell}+1})$.
 Hence, the distance of $\f^n(x)$ to the left (and right) endpoint of $\f^{j_1}(I_{n_\ell+i})=\f^{j_0+n}(I_{n_\ell+i})$
 equals the distance of $x$ to the left (and right) endpoint of $\f^{j_0}(I_{n_\ell+i})$.
 By Proposition~\ref{prop: properties of partition by closest return intervals}~\!(iii),
 we further have $\mc Q_{\f^{j_1}(I_{n_\ell+i}),n_{\ell+1}}=\f^n(\mc Q_{\f^{j_0}(I_{n_\ell+i}),n_{\ell+1}})$.
 By definition of $C_{\ell+1}$, this indeed shows $\f^n(x)\in C_{\ell+1}$ as well as $x\sim_{\ell+1} \f^n(x)$ and hence gives
 $\f^n(x)\in C$
 by induction on $\ell$.
%
\end{proof}

The next statement is crucial for establishing the self-similarity of $C$.

\begin{lem}\label{lem: along one orbit all points on C are equivalent}
 If $x\in C$ and $y\in \mc O(x)\cap C$, then $x\sim_\ell y$ for sufficiently large $\ell$.
\end{lem}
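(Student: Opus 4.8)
The plan is to fix $x\in C$ and $y=\f^n(x)\in\mc O(x)\cap C$ with $n\in\Z$, and to show that for all sufficiently large $\ell$ the relation $x\sim_\ell y$ holds by verifying, one after the other, the three requirements in the definition of $\sim_\ell$: first the ``away from the translated return intervals'' condition, then condition (a), then condition (b). Since $\mc O^+(0)\cap C=\emptyset$ and, more generally, no point of $C$ lies on the forward or backward orbit of $0$ (the set $\bigcup_{j}\f^j(I_{n_\ell}\cup I_{n_\ell+1})$ shrinks around such orbit points as $\ell\to\infty$), both $x$ and $\f^n(x)$ avoid $\bigcup_{j=1}^{|n|}\f^j(I_{n_\ell}\cup I_{n_\ell+1})$ once $\ell$ is large: indeed $|I_{n_\ell}|\to 0$, so the finitely many ($2|n|$) intervals in that union have total length tending to $0$, and neither $x$ nor $y$ is an endpoint of any $\f^j(I_m)$ (those endpoints lie on $\mc O(0)$, hence outside $C$), so it suffices that $x,y$ are not \emph{inside} the union, which holds for $\ell$ large since $x,y\in C$ and $C$ is disjoint from a shrinking neighbourhood of the relevant finitely many orbit points of $0$.

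Next, for condition (a): by Proposition~\ref{prop: properties of partition by closest return intervals}~(i) the intervals $\{\f^j(I_{n_\ell+i}):1\le j\le q_{n_\ell+1-i}\}$ (for $i\in\{0,1\}$) essentially partition $\T^1$, so $x$ lies in the interior of a unique such interval $\f^{j_0}(I_{n_\ell+i})$ (interior because $x\in C$ and the endpoints are on $\mc O(0)$), and similarly $y=\f^n(x)\in\inte\f^{j_1}(I_{n_\ell+i})$ for some $j_1$; since $\f^n$ is a rotation and $x\notin\bigcup_{j=1}^{|n|}\f^j(I_{n_\ell+i})$, applying $\f^n$ to $\f^{j_0}(I_{n_\ell+i})$ keeps it an element of the same family (it is $\f^{j_0+n}(I_{n_\ell+i})$ and the index stays in range precisely because of the avoidance condition), so $j_1=j_0+n$ and (a) holds with the \emph{same} $i$ for both points. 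The last and main obstacle is condition (b): I must show that the two partition intervals $\f^{j_0}(I_{n_\ell+i})$ and $\f^{j_1}(I_{n_\ell+i})=\f^n(\f^{j_0}(I_{n_\ell+i}))$ have ``compatible accessibility'' (accessible from the same side with level difference even, or both non-accessible) for all large $\ell$.

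To handle (b) I would argue as follows. Accessibility of a partition interval $J\in\mc P_{n_\ell}$ and its level $k$ are determined by the history of the construction, i.e.\ by which intervals were removed at steps $k-1,k,\dots$; and the removal rule (1)--(3) is equivariant under those rotations $\f^m$ which map one interval of $\mc P_{n_j}$ onto another for all intermediate $j$ — this is exactly the self-similarity recorded in Proposition~\ref{prop: properties of partition by closest return intervals}~(iii), used inductively over the construction steps. The point is that for $\ell$ large, the rotation $\f^n$ carrying $\f^{j_0}(I_{n_\ell+i})$ to $\f^{j_1}(I_{n_\ell+i})$ also carries $\mc Q_{\f^{j_0}(I_{n_{\ell'}+i'})}$ onto $\mc Q_{\f^{j_1}(I_{n_{\ell'}+i'})}$ for every earlier level $\ell'$ appearing in the relevant history, because the avoidance condition $x,y\notin\bigcup_{j=1}^{|n|}\f^j(I_{n_{\ell'}}\cup I_{n_{\ell'}+1})$ for all $\ell'\le\ell$ (again automatic for $\ell$ large, using monotonicity of $|I_m|$) guarantees that the block containing $x$ never ``wraps around'' $0$ under iteration of $\f$ up to time $|n|$; hence the decision procedure producing the accessibility data of $\f^{j_0}(I_{n_\ell+i})$ is transported verbatim by $\f^n$ to that of $\f^{j_1}(I_{n_\ell+i})$. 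In particular the two intervals are accessible from the same side (or both not at all) with \emph{equal} levels $k=k'$, so $k-k'=0$ is even and (b) holds. I expect the bookkeeping in this last step — making precise that ``the construction commutes with $\f^n$ as long as we stay clear of $0$'' — to be the genuinely delicate part; everything else is a routine application of the preceding proposition together with $\mc O(0)\cap C=\emptyset$.
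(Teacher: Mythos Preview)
Your treatment of the avoidance condition and of condition~(a) is essentially the same as in the paper and is fine. The gap is in your argument for condition~(b).

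You claim that for $\ell$ large the avoidance condition $x,y\notin\bigcup_{j=1}^{|n|}\f^j(I_{n_{\ell'}}\cup I_{n_{\ell'}+1})$ holds \emph{for all} $\ell'\le\ell$, and from this you want to conclude that $\f^n$ transports the entire construction history verbatim, forcing equal accessibility levels $k=k'$. But monotonicity of $|I_m|$ works against you here: for small $\ell'$ the intervals $I_{n_{\ell'}}$ are \emph{large}, so the union $\bigcup_{j=1}^{|n|}\f^j(I_{n_{\ell'}}\cup I_{n_{\ell'}+1})$ may well contain $x$ or $y$ (for $|n|$ large it covers all of $\T^1$). Hence $\f^n$ need not match the coarse partitions $\mc P_{n_{\ell'}}$ at early levels, and the construction histories of the two intervals can genuinely diverge before they start to agree. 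In particular one interval may be $k$-accessible and the other $k'$-accessible with $k\ne k'$, or one may be accessible and the other not, or they may be accessible from opposite sides --- none of which your transport argument excludes.

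The paper does not try to show $k=k'$. Instead it fixes an $\ell$ for which (a) holds and then runs a finite case analysis (accessible from different sides; same side but $k-k'$ odd; one accessible with $\ell-k$ even and the other not; one accessible with $\ell-k$ odd and the other not), showing in each case that after increasing $\ell$ by at most two the pair reaches a configuration satisfying~(b). The mechanism that resolves the mismatch is precisely the asymmetric removal rule~(1): depending on the parity of $\ell-k$ one removes two intervals on one side and one on the other, which desynchronises and then resynchronises the accessibility data of the two intervals. Your proposal does not use rule~(1) at all, and without it there is no reason the accessibility parities should ever align.
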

\begin{proof}
 Without loss of generality, we may assume $y=\f^{-n}(x)$ for some $n\in \N$.
 As $\mc O^+(0)\cap C=\emptyset$ and due to Proposition~\ref{prop: properties of partition by closest return intervals}~(i),
 there is $\ell_0\in \N$ such that for all $\ell\geq \ell_0$ there is $i_\ell\in \{0,1\}$ with $x \in \textrm{int} \bigcup_{j=2n+1}^{q_{n_\ell+1-i_\ell}}\f^j(I_{n_\ell+i_\ell})$ 
 and hence $y \in \textrm{int}\bigcup_{j=n+1}^{q_{n_\ell+1-i_\ell}}\f^j(I_{n_\ell+i_\ell})$. 
 In other words, there are $1\leq j_0^{\ell},j_1^{\ell}\leq q_{n_\ell+1-i_\ell}$ with $x\in \textrm{int}\,\f^{j_0^{\ell}}(I_{n_\ell+i_\ell})$ and 
 $y\in \textrm{int}\, \f^{j_1^{\ell}}(I_{n_\ell+i_\ell})=\textrm{int}\,\f^{j_0^\ell-n}(I_{n_\ell+i_\ell})$.
 As in the proof of Lemma~\ref{lem: sim is an equivalence relation}, we see that
 $x$ and $y$ have the same distance to the endpoints of $\f^{j_0^\ell}(I_{n_\ell+i_\ell})$ and $\f^{j_1^\ell}(I_{n_\ell+i_\ell})$, respectively,
 and that $\mc Q_{\f^{j_1^\ell}(I_{n_\ell+i_\ell}),n_{\ell+1}}=\f^{-n}(\mc Q_{\f^{j_0^\ell}(I_{n_\ell+i_\ell}),n_{\ell+1}})$.
 It remains to show (b) for sufficiently large $\ell$.
 To that end, pick some $\ell\geq\ell_0$.
 We have to consider the following cases.
 
 
 \emph{Case 1:
 $\f^{j_0^\ell}(I_{n_\ell+i_\ell})$ and $\f^{j_1^\ell}(I_{n_\ell+i_\ell})$ are accessible from different sides.}
 By the construction of $C_{\ell+1}$ and as $\# \mc Q_{\f^{j_0}(I_{n_\ell+i_0}),n_{\ell+1}}\geq 6$, we 
 have that either $\f^{j_0^{\ell+1}}(I_{n_{\ell+1}+i_{\ell+1}})$ and $\f^{j_1^{\ell+1}}(I_{n_{\ell+1}+i_{\ell+1}})$ are accessible from the same side,
 or at least one of the two intervals is not accessible.
 Hence, we have reduced the problem to one of the following cases.
 
 \emph{Case 2: $\f^{j_0^{\ell}}(I_{n_{\ell}+i_{\ell}})$ as well as $\f^{j_1^{\ell}}(I_{n_{\ell}+i_{\ell}})$ are from the same side 
 $k$- and $k'$-accessible, respectively, and $k-k'$ is odd.}
 We may assume without loss of generality that both intervals are accessible from the right and that $\ell-k$ is even.
 If $\f^{j_0^{\ell+1}}(I_{n_{\ell+1}+i_{\ell+1}})$ is still accessible from the right, then 
 $\f^{j_1^{\ell+1}}(I_{n_{\ell+1}+i_{\ell+1}})$ is not accessible anymore since $\f^{j_1^{\ell}}(I_{n_{\ell}+i_{\ell}})$ has been dealt with
 according to (2) while $\f^{j_0^{\ell}}(I_{n_{\ell}+i_{\ell}})$ has been dealt with according (1).
 Hence, we are in Case 4.
 If, however, $\f^{j_0^{\ell+1}}(I_{n_{\ell+1}+i_{\ell+1}})$ is not accessible from the right anymore, then 
 the same is true for $\f^{j_1^{\ell+1}}(I_{n_{\ell+1}+i_{\ell+1}})$ and hence either both are $\ell+1$-accessible from the left
 or not accessible at all.
 In both cases we are done.
 
 \emph{Case 3: $\f^{j_0^{\ell}}(I_{n_{\ell}+i_{\ell}})$ is $k$-accessible 
 (from some side) with $\ell-k$ even while $\f^{j_1^{\ell}}(I_{n_{\ell}+i_{\ell}})$ is not accessible.}
 Without loss of generality, we may assume that $\f^{j_0^{\ell}}(I_{n_{\ell}+i_{\ell}})$ is accessible from the right.
 If $\f^{j_0^{\ell+1}}(I_{n_{\ell+1}+i_{\ell+1}})$ is accessible from the left or not accessible at all,
 the same holds true for $\f^{j_1^{\ell+1}}(I_{n_{\ell+1}+i_{\ell+1}})$ and we are done.
 If $\f^{j_0^{\ell+1}}(I_{n_{\ell+1}+i_{\ell+1}})$ is still $k$-accessible from the right, then
 $\f^{j_1^{\ell+1}}(I_{n_{\ell+1}+i_{\ell+1}})$ is not accessible.
 In this case we are in Case 4.
 
 \emph{Case 4: $\f^{j_0^{\ell}}(I_{n_{\ell}+i_{\ell}})$ is $k$-accessible 
 (from some side) with $\ell-k$ odd while $\f^{j_1^{\ell}}(I_{n_{\ell}+i_{\ell}})$ is not accessible.}
 Without loss of generality, we may assume that $\f^{j_0^{\ell}}(I_{n_{\ell}+i_{\ell}})$ is accessible from the right.
 Note that $\f^{j_0^{\ell+1}}(I_{n_{\ell+1}+i_{\ell+1}})$ is 
 still $k$-accessible from the right if
 and only if $\f^{j_1^{\ell+1}}(I_{n_{\ell+1}+i_{\ell+1}})$
 is $k'$-accessible from the right as well with $k'=\ell+1$ in which case we are done since $k'-k$ is even.
 In the situation where $\f^{j_0^{\ell+1}}(I_{n_{\ell+1}+i_{\ell+1}})$ is accessible from the left or
 not accessible at all, the same holds true for $\f^{j_1^{\ell+1}}(I_{n_{\ell+1}+i_{\ell+1}})$
 and we are done, too.
\end{proof}
%
%

\begin{lem}\label{lem: self similar cantor set}
 Suppose we are given $x,y\in C$ with $y=\f^n(x)$ for some $n\in \Z$. 
 Then there is $\eps>0$ such that $\f^n(B_\eps(x)\cap C)=B_\eps(y)\cap C$.
\end{lem}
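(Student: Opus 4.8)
The plan is to reduce the claim to Lemmas~\ref{lem: sim is an equivalence relation} and~\ref{lem: along one orbit all points on C are equivalent} by a local analysis along the orbit of $x$; assume $n\neq 0$, the case $n=0$ being trivial. First I would apply Lemma~\ref{lem: along one orbit all points on C are equivalent} to the pair $x,y=\f^n(x)\in C$ to obtain some $\ell\in\N$ with $x\sim_\ell y$, and fix the associated $i\in\{0,1\}$ and indices $1\leq j_0,j_1\leq q_{n_\ell+1-i}$ with $x\in\textrm{int}\,\f^{j_0}(I_{n_\ell+i})$ and $y\in\textrm{int}\,\f^{j_1}(I_{n_\ell+i})$. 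As in the proof of Lemma~\ref{lem: sim is an equivalence relation}, Proposition~\ref{prop: properties of partition by closest return intervals}~(i) together with $x,y\notin\bigcup_{j=1}^{|n|}\f^j(I_{n_\ell}\cup I_{n_{\ell}+1})$ forces $j_1=j_0+n$, so $\f^{j_1}(I_{n_\ell+i})=\f^n\bigl(\f^{j_0}(I_{n_\ell+i})\bigr)$. Since $\f$ is an isometry of $\T^1$, one has $\f^n(B_\eps(x))=B_\eps(y)$ for every $\eps>0$; as $\textrm{int}\,\f^{j_0}(I_{n_\ell+i})$ and $\textrm{int}\,\f^{j_1}(I_{n_\ell+i})$ are open sets containing $x$ and $y$ respectively, while $\bigcup_{j=1}^{|n|}\f^j(I_{n_\ell}\cup I_{n_{\ell}+1})$ is closed and avoids both, I can choose $\eps>0$ so small that $B_\eps(x)\ssq\textrm{int}\,\f^{j_0}(I_{n_\ell+i})$, $B_\eps(y)\ssq\textrm{int}\,\f^{j_1}(I_{n_\ell+i})$ and $\bigl(B_\eps(x)\cup B_\eps(y)\bigr)\cap\bigcup_{j=1}^{|n|}\f^j(I_{n_\ell}\cup I_{n_{\ell}+1})=\emptyset$.

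The heart of the argument is that every $x'\in B_\eps(x)$ then satisfies $x'\sim_\ell\f^n(x')$ with exactly the same data $i,j_0,j_1$. Indeed, condition~(a) holds because $x'\in\textrm{int}\,\f^{j_0}(I_{n_\ell+i})$ and $\f^n(x')\in\f^n(B_\eps(x))=B_\eps(y)\ssq\textrm{int}\,\f^{j_1}(I_{n_\ell+i})$; the exclusion from $\bigcup_{j=1}^{|n|}\f^j(I_{n_\ell}\cup I_{n_{\ell}+1})$ is guaranteed by the choice of $\eps$; and condition~(b) refers only to the two fixed intervals $\f^{j_0}(I_{n_\ell+i})$ and $\f^{j_1}(I_{n_\ell+i})$, not to the points, so it is inherited verbatim from $x\sim_\ell y$. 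Furthermore, recalling that $C_\ell$ is a union of intervals from $\mc P_{n_\ell}$ with pairwise disjoint interiors (Proposition~\ref{prop: properties of partition by closest return intervals}~(i)), and that $x,y\in C\ssq C_\ell$ lie in the interiors of the partition intervals $\f^{j_0}(I_{n_\ell+i}),\f^{j_1}(I_{n_\ell+i})\in\mc P_{n_\ell}$, it follows that both of these intervals are contained in $C_\ell$, whence $B_\eps(x)\cup B_\eps(y)\ssq C_\ell$. For $x'\in B_\eps(x)\cap C$ we therefore have $x'\in C$, $x'\sim_\ell\f^n(x')$ and $\f^n(x')\in B_\eps(y)\ssq C_\ell$, so Lemma~\ref{lem: sim is an equivalence relation} yields $\f^n(x')\in C$. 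This proves $\f^n\bigl(B_\eps(x)\cap C\bigr)\ssq B_\eps(y)\cap C$.

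For the reverse inclusion I would use that the relation $\sim_\ell$ is symmetric in its two arguments: $x\sim_\ell\f^n(x)$ holds iff $\f^n(x)\sim_\ell\f^{-n}\bigl(\f^n(x)\bigr)$, since replacing $n$ by $-n$ merely swaps $j_0$ with $j_1$ (and the two intervals, for which~(b) is symmetric). Rerunning the previous paragraph with $y,-n,\f^{-n}$ and data $i,j_1,j_0$ in place of $x,n,\f^n$ and $i,j_0,j_1$ — the same $\eps$ works, since $\f^{-n}(B_\eps(y))=B_\eps(x)\ssq C_\ell$ — gives $\f^{-n}\bigl(B_\eps(y)\cap C\bigr)\ssq B_\eps(x)\cap C$, i.e.\ $B_\eps(y)\cap C\ssq\f^n\bigl(B_\eps(x)\cap C\bigr)$; combining the two inclusions proves the lemma. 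The step demanding the most care is the transfer of the defining data of $x\sim_\ell y$ to all nearby pairs $(x',\f^n(x'))$ — in particular checking that the accessibility condition~(b) is a property of the partition intervals alone and that the hypothesis $\f^n(x')\in C_\ell$ of Lemma~\ref{lem: sim is an equivalence relation} is met; once $\eps$ is chosen as above this transfer is immediate, and no further combinatorics on the construction of $C$ is needed.
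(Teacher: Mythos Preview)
Your proof is correct and follows essentially the same route as the paper's own argument: invoke Lemma~\ref{lem: along one orbit all points on C are equivalent} to obtain $\ell$ with $x\sim_\ell y$, choose $\eps$ so that $B_\eps(x)$ and $B_\eps(y)$ sit inside the interiors of the respective partition intervals, transfer the relation $\sim_\ell$ to every nearby pair $(x',\f^n(x'))$, and then apply Lemma~\ref{lem: sim is an equivalence relation}. You are somewhat more explicit about verifying the exclusion $x',\f^n(x')\notin\bigcup_{j=1}^{|n|}\f^j(I_{n_\ell}\cup I_{n_\ell+1})$ and about why the partition intervals lie in $C_\ell$, but these are exactly the points the paper sweeps under ``clearly''; no genuinely different idea is involved.
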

\begin{proof}
 Due to Lemma~\ref{lem: along one orbit all points on C are equivalent}, there is $\ell\in\N$ such that $x\sim_\ell y$.
 In particular, there are hence $i\in\{0,1\}$ and $1\leq j_0,j_1\leq q_{n_{\ell}+1-i}$ with
 $x\in \textrm{int}\,\f^{j_0}(I_{n_\ell+i})\ssq C_\ell$ and 
 $\f^n(x)\in \textrm{int}\,\f^{j_1}(I_{n_\ell+i})\ssq C_\ell$.
 Let $\eps>0$ be such that $B_\eps(x)\ssq \f^{j_0}(I_{n_\ell+i})$ (and hence, $B_\eps(y)\ssq \f^{j_1}(I_{n_\ell+i})$, too).
 Suppose there is $z\in B_\eps(x)\cap C$.
 Then we clearly have $\f^n(z)\in C_\ell$ and $z\sim_\ell \f^n(z)$.
 By Lemma~\ref{lem: sim is an equivalence relation}, we hence have $\f^n(z)\in C$.
 In other words, $\f^n(B_\eps(x)\cap C)\ssq B_\eps(y)\cap C$.
 Similarly, we get the opposite inclusion and hence obtain the desired equality.
\end{proof}

\subsection{Filling the gaps of the self-similar Cantor set}\label{sec: filling the gaps}
We now come to the construction of two windows $W$ and $V$ with $\partial W=\partial V=C$ 
which give rise to model sets which are almost automorphic extensions of $(\T,\f)$ and have zero entropy.
These model sets are at two different ends of low complexity dynamics:
the fibres of $\beta\:\Omega(\oplam(W))\to \T$ have at most two elements and
$\Omega(\oplam(W))$ allows for two distinct ergodic measures;
$\beta\:\Omega(\oplam(V))\to \T$ 
is a metric isomorphy (see the discussion in Section~\ref{Spectra}) and hence $(\Omega(\oplam(V)),\R)$ is mean equicontinuous (which, in particular, yields unique ergodicity)
for the prize of fibres of infinite cardinality.
It is worth mentioning that infinite fibres are, in fact, a necessary requirement for an irregular almost automorphic system
to be mean equicontinuous \cite{FuhrmannGrogerLenz2018}.

In order to construct $W$ and $V$, it remains to fill the gaps of $C$, that is, the connected components of
$C^c$, appropriately.
As a preparation, we first take a closer look at the accessible points of $C$.
To that end, let us provide the following observation.
\begin{prop}
  For all $\ell\in \N$, there are $J^1_{\ell+1},J^2_{\ell+1}\in \mc P_{n_{\ell+1}}$ such that
 for each $J\in\mc P_{n_{\ell}}$ the left-most interval (the interval second from left) of $\mc Q_{J,n_{\ell+1}}$
  is a translated copy of $J^1_{\ell+1}$ ($J^2_{\ell+1}$).
  
  A similar statement holds if we replace left by right.
\end{prop}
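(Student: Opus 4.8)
The plan is to use the translation-equivariance of the refinement procedure (Proposition~\ref{prop: properties of partition by closest return intervals}~(iii)) to reduce the claim to a comparison of the refinements of just two reference intervals, and then to read the result off from the Rokhlin-tower description of $\mc P_{n_{\ell+1}}$, exploiting that $q_{n_\ell+1}$ is tiny compared with the relevant tower heights (this is where $n_{\ell+1}\geq n_\ell+6$ enters). First I would record that, by definition, $\mc P_{n_\ell}$ consists of intervals in exactly two translation classes -- the translates of $I_{n_\ell}$ and those of $I_{n_\ell+1}$, which are distinct since $|I_{n_\ell}|>|I_{n_\ell+1}|$. Concretely, every $J\in\mc P_{n_\ell}$ equals $\f^{k}(\tilde J)$ for some $k\geq 0$ and $\tilde J\in\{\f(I_{n_\ell}),\f(I_{n_\ell+1})\}$ (note $\f(I_{n_\ell}),\f(I_{n_\ell+1})\in\mc P_{n_\ell}$), and then $\mc Q_{J,n_{\ell+1}}=\f^{k}\!\left(\mc Q_{\tilde J,n_{\ell+1}}\right)$ by Proposition~\ref{prop: properties of partition by closest return intervals}~(iii). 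Since $\f^{k}$ is an orientation-preserving homeomorphism of $\T^1$, it carries the left-most (resp.\ second-from-left) interval of $\mc Q_{\tilde J,n_{\ell+1}}$ onto the corresponding interval of $\mc Q_{J,n_{\ell+1}}$. It therefore suffices to show that the left-most intervals of $\mc Q_{\f(I_{n_\ell}),n_{\ell+1}}$ and $\mc Q_{\f(I_{n_\ell+1}),n_{\ell+1}}$ are translated copies of one another, and likewise for the second-from-left intervals; one then takes $J^1_{\ell+1}$ and $J^2_{\ell+1}$ to be the left-most and second-from-left intervals of $\mc Q_{\f(I_{n_\ell}),n_{\ell+1}}$.

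The next point is that $\f(I_{n_\ell})$ and $\f(I_{n_\ell+1})$ are adjacent, sharing the corner $\f(0)=\w$. Fixing the orientation so that $I_{n_\ell}$ lies to the right of $0$ (the mirror case being symmetric and yielding the ``left $\rightarrow$ right'' variant), one has $\f(I_{n_\ell})=[\w,\,\w+|I_{n_\ell}|]$ and $\f(I_{n_\ell+1})=[\w-|I_{n_\ell+1}|,\,\w]$, with left endpoints $\w$ and $\f^{q_{n_\ell+1}}(\w)$. Write $N:=n_{\ell+1}$. Because $\#\mc Q_{J,N}\geq 8$ for all $J\in\mc P_{n_\ell}$, the left-most interval $P_0$ of $\mc Q_{\f(I_{n_\ell}),N}$ is exactly the interval of $\mc P_N$ with left endpoint $\w$, its second-from-left interval $P_1$ is the interval of $\mc P_N$ immediately to the right of $P_0$, and analogously the first two intervals $P_0',P_1'$ of $\mc Q_{\f(I_{n_\ell+1}),N}$ are the two consecutive intervals of $\mc P_N$ beginning at $\f^{q_{n_\ell+1}}(\w)$. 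Everything thus reduces to the identities
\[
  \f^{q_{n_\ell+1}}(P_0)=P_0'\eqand\f^{q_{n_\ell+1}}(P_1)=P_1',
\]
which, as $\f^{q_{n_\ell+1}}$ is a rotation, identify $P_0',P_1'$ with translated copies of $P_0,P_1$ and hence finish the argument.

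For these identities I would invoke that $\mc P_N$ is, up to the global rotation by $\w$, the dynamical partition of level $N$: it is the union of two Rokhlin towers for $\f$, over bases that are translates of $I_N$ and of $I_{N+1}$ and of heights $q_{N+1}$ and $q_N$, every interval of $\mc P_N$ being $\f^{j}(\text{base})$ for a unique admissible $j$. A short computation using the closest-return structure (Proposition~\ref{prop: properties of partition by closest return intervals}) and the recursion $q_{N+1}=a_{N+1}q_N+q_{N-1}$ shows that $P_0$ is a tower base, while $P_1$ sits in the tower of height $q_{N+1}$ at a level $j_1$ with $q_{N+1}-j_1\geq q_{N-1}-1$. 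Since $n_\ell+1\leq N-5$ and the $q$'s are strictly increasing, $q_{n_\ell+1}<q_{N-1}\leq q_N$, so applying $\f^{q_{n_\ell+1}}$ moves $P_0$ and $P_1$ upward within their towers without passing the top; hence $\f^{q_{n_\ell+1}}(P_0)$ and $\f^{q_{n_\ell+1}}(P_1)$ are again intervals of $\mc P_N$. As $\f^{q_{n_\ell+1}}$ preserves orientation and maps $\w$ to $\f^{q_{n_\ell+1}}(\w)$, these two intervals are consecutive and start at $\f^{q_{n_\ell+1}}(\w)$; and since their combined length is at most $2|I_N|<|I_{n_\ell+1}|$ (closest-return lengths at least halving every two steps, and $N\geq n_\ell+6$), they both lie inside $\f(I_{n_\ell+1})$ and therefore coincide with $P_0'$ and $P_1'$. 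The step I expect to be the main obstacle is precisely this last one: locating $P_0$ and $P_1$ in the two towers of $\mc P_N$ accurately enough to be sure $\f^{q_{n_\ell+1}}$ does not push them past the top. This is routine but requires separating the two parity cases (whether $I_N$ lies to the left or right of $0$) with some care; the remainder of the proof is soft. The ``left $\rightarrow$ right'' statement follows by running the same argument with the orientation reversed.
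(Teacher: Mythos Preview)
Your argument is correct, but it takes a more laborious route than the paper's. You work directly at level $N=n_{\ell+1}$ and compare the two reference intervals $\f(I_{n_\ell})$ and $\f(I_{n_\ell+1})$ by explicitly locating their first two subintervals in the Rokhlin towers of $\mc P_N$ and then pushing them by $\f^{q_{n_\ell+1}}$; this works, but (as you yourself flag) the identification of $P_1$ in the towers requires a parity case split and some bookkeeping with the recursion $q_{N+1}=a_{N+1}q_N+q_{N-1}$.

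The paper avoids this computation by inserting an \emph{intermediate} refinement level. Assuming without loss of generality that $I_{n_\ell+2}$ lies to the right of $0$ (else one uses $n_\ell+3$), it observes that for \emph{every} $J\in\mc P_{n_\ell}$ the left-most interval of $\mc Q_{J,n_\ell+2}$ is a translated copy of $I_{n_\ell+2}$; this is a one-line consequence of the standard refinement pattern and $q_{n+1}\geq q_n+q_{n-1}$. Once the left-most intervals at level $n_\ell+2$ are all translates of a single interval, Proposition~\ref{prop: properties of partition by closest return intervals}~(iii) immediately transfers this to level $n_{\ell+1}$: the first two subintervals of $\mc Q_{J,n_{\ell+1}}$ are the first two subintervals of the refinement of that common left-most piece, hence independent of $J$. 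The assumption $n_{\ell+1}\geq n_\ell+6$ guarantees there is enough room for this. So where you compare two towers at the deep level $N$, the paper synchronises at the shallow level $n_\ell+2$ and lets (iii) do the work --- shorter, and with no tower case analysis.
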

\begin{proof}
We only consider the ``left case''.
Without loss of generality, we may assume that $I_{n_\ell+2}$ is an interval to the right of zero (otherwise, we may proceed with $n_\ell+3$ instead of $n_\ell+2$).
 Now, recall that $q_{n+1}\geq q_{n}+q_{n-1}$ for each $n\in \N$.\footnote{In fact,
if $a_{n}$ is the $n$-th coefficient of the continued fraction expansion of $\w$, then $q_{n+1}=a_n q_{n}+q_{n-1}$ \cite[Section~4.4]{Kurka2003}.}
  Given any $J\in\mc P_{n_{\ell}}$, this yields that the left-most interval of
 $\mc Q_{J,n_{\ell}+2}$ is a translated copy of $I_{n_\ell+2}$.
 Since we assume $n_{\ell+1}\geq n_\ell+6$, the statement follows by means of Proposition~\ref{prop: properties of partition by closest return intervals}~(iii).
\end{proof}


Now, let $(J_n)$ be an enumeration of the gaps of $C$ and denote by $x_n\in C$ the right endpoint
of the gap $J_n$.
Similarly as in the previous section, we say that $J_n$ is of \emph{level} $\ell$ if $J_n\cap C_\ell^c\neq \emptyset$ while
$J_n\ssq C_{\ell-1}$.
Assuming that $J_n$ is of level $\ell$, let $y_n$ denote the isolated point in $J_n$ which had to be removed
in step (3) of the construction of $C_\ell$.
Now, let $k$ be the level of $J_n$
and $k'$ the level of $J_{n'}$ and assume without loss of generality that $k< k'$.
Suppose $k'-k$ is even.
%
Then 
\begin{align*}
x_n-x_{n'}=y_n-y_{n'}+\sum_{\ell=k}^{k'-1}
\alpha_{-1^{\ell-k};\ell}
\end{align*}
where $\alpha_{1;\ell}=|J_\ell^1|$ and $\alpha_{-1;\ell}=|J_\ell^1|+|J_\ell^2|$ (recall that every second step of the construction of $C_{k'}$, we remove two intervals
on either side of $J_n\cap C_k$).
Hence, $x_n-x_{n'}$ is an integer multiple of $\w$.
In other words, all right endpoints of the even-level gaps of $C$ belong to one orbit
and all right end-points of odd-level gaps belong to one orbit.
Now, suppose $k-k'$ is odd.
Then 
\begin{align*}
x_n-x_{n'}=y_n-y_{n'}+\sum_{\ell=k}^{k'-1}\alpha_{-1^{\ell-k};\ell}+\sum_{\ell=k'}^\infty(-1)^{\ell-k'}|J_\ell^2|.
\end{align*}
We may assume without loss of generality
that $\sum_{\ell=2}^\infty(-1)^{\ell}|J_\ell^2|$ (and thus $\sum_{\ell=k'}^\infty(-1)^{\ell-k'}|J_\ell^2|$)
is not an integer multiple of $\omega$.\footnote{
First, by possibly going over to a subsequence, we may assume that $2\sum_{\ell=k+1}^\infty|J_\ell^2|<|J_k^2|$ for all integers $k\geq 2$.
Hence, $\sum_{\ell=2}^\infty(-1)^{\ell_j}|J_{\ell_j}^2|\neq \sum_{\ell=2}^\infty(-1)^{\ell_j'}|J_{\ell_j'}^2|$
for distinct subsequences $(n_{\ell_j})$ and $(n_{\ell_j'})$ of $(n_\ell)$.
Second, there clearly are uncountably many subsequences but only countably many integer multiples of $\w$.}
Then $x_n$ and $x_{n'}$ belong to different orbits of $\f$.
Similarly, we have that the left endpoints of even-level gaps belong to one orbit and those of
odd-level gaps belong to a different one.
Since two gaps are of equal length if and only if they are of the same level, this gives that $C$ is, in fact, irredundant.

Without loss of generality, we may assume in the following that $J_{2n}$ is of an even level
while $J_{2n+1}$ is of an odd level for each $n\in \N$.
We define the window $W$ by
\[
 W=C\cup \bigcup_{n\in\N} J_{2n}.
\]
Observe that between two gaps of level $\ell$, there always is a gap of level $\ell+1$
so that $\partial W=C$ and $W=\overline{\textrm{int} \, W}$.

\begin{lem}\label{lem: self-similarity of W}
 Suppose we are given $x,y\in C$ with $y=\f^n(x)$ for some $n\in \Z$. 
 Then there is $\eps>0$ such that $\f^n(B_\eps(x)\cap W)=B_\eps(y)\cap W$.
\end{lem}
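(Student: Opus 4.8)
The plan is to deduce the statement from Lemma~\ref{lem: self similar cantor set} by showing that the self-similarity of the Cantor set $C$ obtained there is \emph{level-preserving}, in the sense that the orbit correspondence $\f^n$ matches gaps of $C$ of the same parity of level. Since $W=C\cup\bigcup_n J_{2n}$, with $J_{2n}$ ranging over the even-level gaps, and $\partial W=C$, this is enough: because $\f^n$ is an isometry with $\f^n(B_\eps(x))=B_\eps(y)$, the assertion $\f^n(B_\eps(x)\cap W)=B_\eps(y)\cap W$ is equivalent to the statement that, for every $z\in B_\eps(x)$, one has $z\in W$ if and only if $\f^n(z)\in W$; and a point of $B_\eps(x)$ lies in $W$ precisely when it lies in $C$ or in a gap of even level.

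First I would fix, via Lemma~\ref{lem: along one orbit all points on C are equivalent} together with Lemma~\ref{lem: sim is an equivalence relation}, an $\ell\in\N$ with $x\sim_\ell y$, and choose $i\in\{0,1\}$ and $1\leq j_0,j_1\leq q_{n_\ell+1-i}$ as in the definition of $\sim_\ell$; as in the proof of Lemma~\ref{lem: sim is an equivalence relation} one has $j_1=j_0+n$, so that $\f^n$ maps $\f^{j_0}(I_{n_\ell+i})$ isometrically onto $\f^{j_1}(I_{n_\ell+i})$, and both intervals lie in $C_\ell$. I would then pick $\eps>0$ small enough that $B_\eps(x)\ssq\textrm{int}\,\f^{j_0}(I_{n_\ell+i})$ (hence $B_\eps(y)=\f^n(B_\eps(x))\ssq\textrm{int}\,\f^{j_1}(I_{n_\ell+i})$ automatically), that the conclusion $\f^n(B_\eps(x)\cap C)=B_\eps(y)\cap C$ of Lemma~\ref{lem: self similar cantor set} holds, and --- for this fixed $\ell$ the point $x$ has positive distance from each of the at most two gaps of $C$ straddling an endpoint of $\f^{j_0}(I_{n_\ell+i})$, except possibly from one such gap having $x$ as an endpoint --- that $B_\eps(x)$ meets no straddling gap other than, possibly, one with endpoint $x$. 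With this choice every gap of $C$ meeting $B_\eps(x)$ is either contained in $\textrm{int}\,\f^{j_0}(I_{n_\ell+i})$ or is the exceptional straddling gap with endpoint $x$; in the latter case Lemma~\ref{lem: self similar cantor set} already forces $\f^n$ to send the local part of this gap onto the local part of a gap of $C$ with endpoint $y$.

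The core of the proof is the claim that the construction of $C$ inside $\f^{j_0}(I_{n_\ell+i})$ agrees, under the translation $\f^n$, with the construction inside $\f^{j_1}(I_{n_\ell+i})$ at every step $m\geq\ell$, and that this agreement respects gap levels up to parity. This is verified by induction on $m$, following the pattern of the proof of Lemma~\ref{lem: sim is an equivalence relation}: Proposition~\ref{prop: properties of partition by closest return intervals}~(iii) identifies the refinement of $\f^{j_0}(I_{n_\ell+i})$ by intervals of $\mc P_{n_m}$ with that of $\f^{j_1}(I_{n_\ell+i})$ under $\f^n$, while condition~(b) in the definition of $x\sim_\ell y$ guarantees that at the seams --- the endpoints of these intervals --- the removal rules (1) and (2) are applied identically to both intervals (either both are accessible from the same side with accessibility indices of equal parity, so that the rule (1)/(2) dichotomy and the numbers of intervals removed match, or neither is accessible and rule (2) applies to both). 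Consequently the gaps created at each step inside $\f^{j_0}(I_{n_\ell+i})$ correspond under $\f^n$ to gaps inside $\f^{j_1}(I_{n_\ell+i})$ whose levels have the same parity, the possible parity shift occurring only at the seam gap (whose level is $\ell+1$ when the interval is inaccessible, and equals the level of the adjacent gap otherwise, so that condition~(b) pins it down modulo $2$). Combined with the previous paragraph, this shows $z\in W\Leftrightarrow\f^n(z)\in W$ for all $z\in B_\eps(x)$, which is the desired equality.

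The step I expect to be the main obstacle is the bookkeeping at the seam: one must track the finitely many gaps of $C$ straddling the endpoints of $\f^{j_0}(I_{n_\ell+i})$ --- these may extend outside the interval and may grow during the construction --- and confirm that condition~(b) of $\sim_\ell$ really determines their levels modulo $2$. The remainder is a faithful transcription of the arguments already used for the self-similarity of $C$.
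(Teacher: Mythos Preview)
Your approach is correct but takes a longer route than the paper's. The paper's proof is just a few lines: starting from the conclusion of Lemma~\ref{lem: self similar cantor set}, it notes that each gap endpoint $x'\in B_\eps(x)\cap C$ is sent by $\f^n$ to a gap endpoint $y'\in B_\eps(y)\cap C$, and then invokes the orbit classification established in the paragraphs immediately preceding the lemma --- namely, that the right endpoints of even-level gaps all lie on one $\f$-orbit, while those of odd-level gaps lie on a \emph{different} orbit (and similarly for left endpoints). Since $x'$ and $y'=\f^n(x')$ are on the same orbit, the gaps $J_m,J_{m'}$ they bound must have the same level parity, hence $J_m\ssq W\Leftrightarrow J_{m'}\ssq W$.

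You instead reprove the level-parity preservation from scratch by replaying the inductive construction of $C$ inside the two $\mc P_{n_\ell}$-intervals and tracking accessibility data via condition~(b). This works, and your identification of the ``seam'' gaps as the only place requiring care is accurate: gaps created entirely in the interiors of the two intervals arise at identical levels because the removal rules~(1)--(3) are applied identically (this is exactly the content of the induction behind Lemma~\ref{lem: sim is an equivalence relation}), while the seam gaps have levels that differ by an even number thanks to condition~(b). But the paper's orbit argument sidesteps this bookkeeping entirely: once you know gap endpoints of different level parities lie on distinct orbits, the single observation that $\f^n$ preserves orbits finishes the proof, with no need to revisit the construction or worry about seams.
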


\begin{proof}
 Lemma~\ref{lem: self similar cantor set} yields $\eps>0$ such that $\f^n(B_{\eps}(x)\cap C)=B_{\eps}(y)\cap C$.
 In particular, each left/right endpoint $x'\in B_\eps(x)$ of a gap $J_n$ which intersects $B_\eps(x)$ corresponds to a left/right
 endpoint $y'=\f^{n}(x')\in B_\eps(y)$ of a gap $J_{n'}$ which intersects $B_\eps(y)$.
 As $J_n$ and $J_{n'}$ thus have endpoints of one and the same orbit,
 the above discussion shows that, by definition of $W$, $J_n\ssq W$ if and only if $J_{n'}\ssq W$ and hence
 $\f^n(B_\eps(x)\cap W)=B_\eps(y)\cap W$.
%
\end{proof}

\begin{rem}\label{rem: simpler self-similar window}
 Here, we see the advantage of the Cantor set $C$ over the alternative set 
 $\tilde C$ discussed in Remark~\ref{rem: simpler self-similar boundary}:
 A similar analysis as the one before shows that \emph{all} points of $\tilde C$ which are accessible
 from the left belong to \emph{one} orbit as do all points which are accessible from the right.
 Hence, by filling some but not all gaps of $\tilde C$, we have that along those orbits which correspond
 to accessible points of $\tilde C$ there are at least two local configurations of $\tilde W$
 so that $\tilde W$ is not perfectly self-similar.
 
 Of course, in order to overcome this problem, we can fill every gap partially: With the above notation, put
 $\tilde W=\tilde C\cup \bigcup_{n\in\N} [y_n,x_n]$.
 The window $\tilde W$ would be perfectly self-similar but its boundary would contain isolated points
 and thus not be a Cantor set anymore.
\end{rem}

Next, we turn to the construction of the window $V$.
Let $(J_n)$ be some
enumeration of the gaps of $C$.
Given a gap $J_n$ and some level $k\in \N_{\geq2}$, let $J(k;J_n)$
be a $k$-level gap which minimises the distance to $J_n$.
We set
\begin{align*} 
 V=\T^1\setminus \bigcup_{k\in\N_{\geq 2}} J(k;J_k).
\end{align*}
Clearly, $\partial V=C$ and $\overline{\interior V}=V$.
Moreover, if $x,\f^n(x) \in C$, then there is $\eps>0$ such that
\[\f^n(B_\eps(x)\cap V^c)\bigcap B_\eps(\f^n(x))\cap V^c=\emptyset\]
since $V$ has exactly one gap of each level and since $C$ is self-similar according to Lemma~\ref{lem: self similar cantor set}.

\begin{rem}
We would like to close this paragraph with a remark on the dependence of the topological entropy 
of a Delone dynamical system on its window.
In the following, consider the CPS  $(\R,\R,\mc L)$ of this section 
and let $(J_n)$ as well as $C$ be as above.
Given a sequence $x\in\{0,1\}^\N$, we may associate to $x$ a set $W(x)\ssq \T^1$
by setting
\[
 W(x)=C\cup \bigcup_{n\in\N,x_n=1}J_n.
\]
We denote by $\mathbb P$ the Bernoulli measure on $\{0,1\}^\N$
with equal probability $1/2$ for both symbols $0$ and $1$.
In contrast to the results of this article, we have
\begin{thm}[{\cite[Theorem~1.1]{JaegerLenzOertel2016PositiveEntropyModelSets}; see also \cite[Theorem~8]{MarkleyPaul1979}}]
For $\mathbb P$-almost every $x\in\{0,1\}^\N$, $W(x)$ is proper
and $\Omega(\oplam(W(x)))$ has positive topological entropy.
\end{thm}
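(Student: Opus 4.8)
The plan is to reduce the statement to producing, over one carefully chosen point of the maximal equicontinuous factor, a single fibre of the torus parametrisation that already carries positive topological entropy, and to extract exponential complexity in that fibre from the independence of the coin tosses $x_n$. Throughout I write $\beta\:\Omega(\oplam(W(x)))\to\T$ for the torus parametrisation, $(\f,\kreis)$ for the underlying rotation, and identify $L^*$ with $\pi^{-1}(\{n\w\bmod 1\})$ as in Section~\ref{MainConstruction}. Properness and irregularity are the easy part: by Proposition~\ref{prop: properties of partition by closest return intervals} infinitely many gaps of $C$ accumulate at each point of $C$ from either side, and each gap is filled or left empty by an independent fair coin, so a Borel--Cantelli argument gives that, for $\mathbb P$-a.e.\ $x$, every point of $C$ is simultaneously an accumulation point of filled gaps and of unfilled gaps on both sides. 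Hence $\overline{\inte W(x)}=W(x)$ and $\partial W(x)=C$ (using that $C$ is nowhere dense and the unfilled gaps lie in the complement of $W(x)$); since $C$ is irredundant and $\Leb_{\kreis}(C)>1-\eps>0$, the window $W(x)$ is proper and irregular and $(\Omega(\oplam(W(x))),\R)$ is a Delone dynamical system with maximal equicontinuous factor $(\T,\R)$.

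Next I would reduce to a single fibre. Being equicontinuous, $(\T,\R)$ has zero topological entropy, so Theorem~\ref{thm:entropyBowen} yields $\htop(\varphi)=\sup_{\xi\in\T}\htop^\xi(\varphi)$; in particular it suffices to exhibit one $\xi=[0,t]_\cL$ with $\htop^\xi(\varphi)>0$. To find many boundary lattice points over such a $\xi$, apply Birkhoff's pointwise ergodic theorem to $(\kreis,\f,\Leb_{\kreis})$ and $\ind_C\in L^1$: for $\Leb_{\kreis}$-a.e.\ $\tau$ the orbit of $\tau$ visits $C$ with asymptotic frequency $\Leb_{\kreis}(C)>0$. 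Fix such a $\tau$, take $t\in\R$ with $t\equiv-\tau$, and set $\Lambda(N)=\{\,l\in L:\ l^*-t\in C,\ |l|\le N\,\}$. Since the boundary lattice points grow linearly along the orbit (here $\det A\neq0$ enters), one obtains $\#\Lambda(N)\ge c_1 N$ for all large $N$, with $c_1>0$.

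The heart of the proof --- and the step I expect to be the main obstacle --- is to show that, for $\mathbb P$-a.e.\ $x$ and the $t$ above, a \emph{positive proportion} of these boundary points carry jointly independent binary choices inside $\beta^{-1}([0,t]_\cL)$: there is $\Lambda^*(N)\ssq\Lambda(N)$ with $\#\Lambda^*(N)\ge c_2 N$ such that for every $a\in\{0,1\}^{\Lambda^*(N)}$ some $\Gamma\in\beta^{-1}([0,t]_\cL)$ satisfies $\Gamma\cap\Lambda^*(N)=\{\,l:a_l=1\,\}$. By \eqref{eq:equivalenceForFibre}, realising a prescription $a$ amounts to finding a sequence $\delta_j\to0$ in the dense set $t-L^*$ with $c_l+\delta_j\in W(x)$ if and only if $a_l=1$, eventually, for all $l\in\Lambda^*(N)$, where $c_l=l^*-t\in C$; a convergent subsequence of $\oplam(W(x)+t-\delta_j)$ then yields $\Gamma$. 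The mechanism I would use: the three-distance theorem keeps the points $\{c_l\}_{l\in\Lambda(N)}$ pairwise $\gtrsim 1/N$-separated, so for $\delta_0\asymp1/N$ the windows $(c_l-\delta_0,c_l+\delta_0)$ are pairwise disjoint and the events $\{\,c_l+\delta\in W(x)\,\}$ are governed by disjoint, hence stochastically independent, families of coins; combining this with the accumulation of unfilled gaps at each $c_l$ (from the first paragraph) and with the multiscale renormalisation structure of $C$ coming from Proposition~\ref{prop: properties of partition by closest return intervals} and Lemma~\ref{lem: self similar cantor set} --- which makes the local configuration near each $c_l$ recur at scales tending to $0$, so that any pattern realised at scale $\delta_0$ is realised at scales arbitrarily close to $0$ --- a second-moment/Borel--Cantelli argument should show that, almost surely, after discarding a bounded fraction of the indices one keeps a linear-sized $\Lambda^*(N)$ on which all $2^{\#\Lambda^*(N)}$ patterns occur. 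This is exactly the step that is forced to fail for the self-similar windows $W$ and $V$ of Section~\ref{MainConstruction}, where the local configurations at boundary points are rigidly coupled along $\f$-orbits.

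Finally, to conclude, fix a small $\eps>0$ below the uniform discreteness scale of $\oplam(W(x))$. By Remark~\ref{remark: inclusion in fibres}, two elements of $\beta^{-1}([0,t]_\cL)$ are $(2\eps,N)$-separated with respect to the van Hove sequence $A_N=[-N,N]$ precisely when they differ at some lattice point in $B_{N+1/\eps}(0)$; since distinct fibre elements differ only at boundary lattice points, the key estimate produces at least $2^{\#\Lambda^*(N+1/\eps)}\ge 2^{c_2 N}$ pairwise $(2\eps,N)$-separated points in $\beta^{-1}([0,t]_\cL)$, whence $S^{\beta^{-1}([0,t]_\cL)}(\varphi,\eps,N)\ge 2^{c_2 N}$ and $h^{\xi}_{\eps}(\varphi)\ge c_2(\log 2)/2$. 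Letting $\eps\to0$ gives $\htop^\xi(\varphi)>0$, and hence $\htop(\varphi)>0$ by the second paragraph. The reduction to a single fibre and the Birkhoff count are routine; the probabilistic bookkeeping of the third paragraph --- that linearly many boundary coordinates can be freed simultaneously with probability one, without the local structures of $W(x)-c_l$ near $0$ conspiring against a common approximating sequence --- is where I expect the real work to lie.
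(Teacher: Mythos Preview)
The paper does not contain a proof of this theorem. It is quoted from \cite[Theorem~1.1]{JaegerLenzOertel2016PositiveEntropyModelSets} (with a reference also to \cite[Theorem~8]{MarkleyPaul1979}) and used as a black box inside a remark in Section~\ref{sec: filling the gaps}, solely to derive the corollary on the failure of semicontinuity of topological entropy in the window. There is therefore nothing in the present paper to compare your argument against.

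As for the argument itself: the reduction to a single fibre via Theorem~\ref{thm:entropyBowen} and the Birkhoff count of boundary points are fine, and the properness/irregularity step is correct. The third paragraph, however, is not a proof but a programme. You yourself flag the crux: for $\mathbb P$-a.e.\ $x$ and \emph{every} $N$, you need linearly many boundary points $c_l$ such that \emph{all} $2^{\#\Lambda^*(N)}$ prescriptions are realised by sequences $\delta_j\to 0$ in $t-L^*$. Two points need genuine work. First, the events ``$c_l+\delta\in W(x)$'' are only governed by independent coins if $c_l+\delta$ actually lies in a gap of $C$; when $c_l+\delta\in C$ the value is forced to $1$, so you must control simultaneously, for all $l$, which $\delta$'s land in gaps near $c_l$ --- the self-similarity of $C$ helps here, but it also couples the gap structure across the different $c_l$, which works against independence. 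Second, passing from ``at some scale $\delta_0\asymp 1/N$'' to ``along a sequence $\delta_j\to 0$'' is not automatic: you need the prescribed pattern to be realisable at \emph{arbitrarily small} scales, and the hand-wave to Lemma~\ref{lem: self similar cantor set} does not obviously deliver this for a \emph{random} filling of the gaps (indeed that lemma is about $C$, not about $W(x)$). The actual proof in \cite{JaegerLenzOertel2016PositiveEntropyModelSets} handles these issues; if you want a self-contained argument you will have to make the second-moment/Borel--Cantelli step precise rather than asserting it.
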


Given two sequences $x,y\in\{0,1\}^\N$, denote by $z(n;x,y)\in \{0,1\}^\N$ that sequence
which coincides with $x$ on the first $n$ entries and with $y$ on all of the remaining entries.
Suppose $x$ and $y$ are elements of $\{0,1\}^\N$ such that
$W(x)=W$ while $W(y)$ is a proper set such that $\Omega(\oplam(W(y)))$ has positive topological entropy.
Observe that for each $n\in \N$ we have that $W(z(n;x,y))$ and $W(z(n;y,x))$ are proper and
$\htop(\Omega(\oplam(W(z(n;x,y))))=\htop(\Omega(\oplam(W(y)))$ as well as
$\htop(\Omega(\oplam(W(z(n;y,x))))=0$.
This immediately yields
\begin{cor}
 Suppose we are given a CPS $(\R,\R,\mc L)$.
 Consider the class of proper windows in $\R$ equipped with the Hausdorff metric.
 The map which sends each window to the topological entropy of the corresponding
 Delone dynamical system is neither upper nor lower semicontinuous.
\end{cor}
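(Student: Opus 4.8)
The plan is to read the corollary off the facts recorded in the paragraph just above the statement, by producing, near two suitably chosen windows, sequences of proper windows along which the topological entropy jumps. Fix the sequence $x\in\{0,1\}^\N$ with $x_n=1$ if and only if $n$ is even, so that $W(x)=W$; since $W$ is perfectly self similar (Lemma~\ref{lem: self-similarity of W}), the results of Section~\ref{Self-similarity} give $\htop(\Omega(\oplam(W)))=0$. Invoking \cite[Theorem~1.1]{JaegerLenzOertel2016PositiveEntropyModelSets}, fix $y\in\{0,1\}^\N$ such that $W(y)$ is proper and $c:=\htop(\Omega(\oplam(W(y))))>0$. By the observations preceding the corollary, for every $n\in\N$ the windows $W(z(n;x,y))$ and $W(z(n;y,x))$ are proper and satisfy $\htop(\Omega(\oplam(W(z(n;x,y)))))=c$ as well as $\htop(\Omega(\oplam(W(z(n;y,x)))))=0$.

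First I would verify that $W(z(n;x,y))\to W$ and $W(z(n;y,x))\to W(y)$ as $n\to\infty$ in the Hausdorff metric. Since $\sum_{m\in\N}\Leb_{\T^1}(J_m)=1-\Leb_{\T^1}(C)<\eps<\infty$, the terms $\Leb_{\T^1}(J_m)$ are summable, so $\delta_n:=\sup_{m>n}\Leb_{\T^1}(J_m)\to 0$. Because $z(n;x,y)$ agrees with $x$ on the first $n$ coordinates, one has $W(z(n;x,y))\,\triangle\,W(x)\subseteq\bigcup_{m>n}J_m$; as the two endpoints of every gap $J_m$ lie in $C$, and $C$ is contained in every window of the form $W(\cdot)$, each point of this symmetric difference lies within $\delta_n$ of a point common to $W(z(n;x,y))$ and $W$. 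Hence the Hausdorff distance between $W(z(n;x,y))$ and $W$ is at most $\delta_n$ and tends to $0$; the same argument, with the roles of the two tails exchanged, gives $W(z(n;y,x))\to W(y)$.

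The corollary is then immediate. Along the sequence $W(z(n;x,y))\to W$ the entropy is constantly equal to $c>0=\htop(\Omega(\oplam(W)))$, so the entropy functional is not upper semicontinuous at $W$; along the sequence $W(z(n;y,x))\to W(y)$ it is constantly equal to $0<c=\htop(\Omega(\oplam(W(y))))$, so it is not lower semicontinuous at $W(y)$. I do not expect a real obstacle here: the only new ingredient is the elementary remark that the gaps of $C$ shrink to zero along any enumeration because $C$ has finite total gap length, so that altering only gaps of arbitrarily high index amounts to an arbitrarily small Hausdorff perturbation. The substantive point — that such alterations leave properness and the topological entropy unchanged — has already been established above, ultimately via the fibre-entropy analysis of Section~\ref{Self-similarity} together with the identity $\partial W(z)=C$, valid for every $z\in\{0,1\}^\N$.
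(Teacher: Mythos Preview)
Your argument is correct and matches the paper's approach exactly: the paper simply writes ``This immediately yields'' before the corollary, and you have made explicit the one step that is left implicit there, namely the Hausdorff convergence $W(z(n;x,y))\to W$ and $W(z(n;y,x))\to W(y)$, via the elementary observation that the gap lengths tend to zero and the endpoints of each gap lie in $C\subseteq W(\cdot)$.

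One small inaccuracy in your closing remark: the identity $\partial W(z)=C$ does \emph{not} hold for every $z\in\{0,1\}^\N$ (take $z$ the constant sequence $1$, for instance, or any $z$ for which the unfilled gaps fail to accumulate on all of $C$). This does not affect your proof, since you only invoke the entropy values for the specific sequences $z(n;x,y)$ and $z(n;y,x)$, which the paper has already asserted; but you should not cite that identity as the mechanism behind the entropy preservation.
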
 
\end{rem}

\subsection{Higher dimensional Euclidean CPS and zero entropy.}\label{sec: higher dimensional CPS and zero entropy}
 In this section, we show how for every higher dimensional Euclidean CPS
 the irregular windows constructed above yield model sets whose associated Delone dynamical system has 
 zero topological entropy in every fibre.
 
 Consider a CPS $(\R^N,\R,\cL)$.
 Analogously to the discussion at the beginning 
 of this section, the lattice $\cL$ can be represented as $\cL = A(\Z^{N+1})$, 
 where $A = (a_{ij}) \in \GL(N+1,\R)$ and each row $(a_{ij})_{j=1}^{N+1}$ has rationally
 independent entries.  Let $v_i = (a_{1i}, \ldots, a_{Ni})^T$ and put $\omega_i =
 a_{N+1,i}$. 
 Without loss of generality, we may assume $\omega_{N+1} = 1$ and $W \ssq [0,1]$ for the rest of this section.  
 
Before we come to the main results of this subsection, let us introduce some
useful concepts and notation.
 Let $\pi : \R \to \R / \Z$ denote the canonical projection and $\pi_{i}\: \R^{N+1}\to \R$ the projection onto the $i$-th coordinate. 
 We have
 \begin{align*}
  L^* &= \pi_{N+1}(\cL) = \left\{ \sum_{i=1}^N n_i\omega_i + n_{N+1} : n_i \in \Z \right\} = \pi^{-1} \left( \left\{ \sum_{i=1}^N n_i \omega_i \mod 1 : n_i \in \Z \right\} \right).
 \end{align*}
 In other words, $L^*$ is the lift of an orbit of a $\Z^N$-rotation on $\R / \Z$ with $N$ rationally
 independent rotation numbers $\omega_1,\ldots,\w_N$.
 To each rotation number, we associate
 a set $L_i^* = \pi^{-1} \left( \left\{ n \omega_i \mod 1 : n \in \Z \right\} \right)$
 and put $A_i = \begin{pmatrix} a_{ii} & a_{i,N+1} \\ \omega_i & 1 \end{pmatrix}$. 
 Then each $A_i \in \mathrm{GL}(2,\R)$ has rationally independent rows so that $\cL_i = A_i(\Z^2) \ssq \R^2$ is an irrational lattice. Note that $\pi_2(\cL_i) = L_i^*$. 
 In this way, we associate $N$ planar CPS $(\R,\R,\cL_i)$ with window $W \ssq \R$ to a given CPS $(\R^N,\R,\cL)$ with exactly the same window $W \ssq \R$.  
 We denote the corresponding Delone dynamical systems by $(\Omega(\oplam_i(W)),\varphi_i)$. 
 Observe that we have 
 $n \omega_i \mod 1 \in W$ if and only if $nv_i - \lfloor n\omega_i \rfloor v_{N+1} \in \oplam(W)$; likewise, we have $n \omega_i \mod 1 \in W$ if and only if 
 $ na_{ii} - \lfloor n\omega_i \rfloor a_{i,N+1} \in \oplam_i(W)$.
  
 Fix $t\in \R$. 
 Given $p = nv_1 + kv_{N+1}+ \sum_{i=2}^{N} m_iv_i \in \oplam(W+t)$, put $\pmb{m}_p = (m_2, \ldots, m_N) \in \Z^{N-1}$. Note that 
 $nv_1 + k v_{N+1}+\sum_{i=2}^{N} m_iv_i \in \oplam(W+t) $ is equivalent to $n\omega_1 + k \in W+t-\sum_{i=2}^N m_i\omega_i$. 
 For $\pmb{m}=(m_2, \ldots, m_N) \in \Z^{N-1}$,
 we define the {\em pseudoline} 
  $$ G_{W+t}(\pmb{m}) = \left\{ nv_1+kv_{N+1}+ \sum_{i=2}^{N} m_iv_i : n,k \in \Z, n\omega_1 + k \in W+t-\sum_{i=2}^N m_i\omega_i \right\}. $$ 
 
 Let us mention a number of immediate and important properties of pseudolines.
 First, $\{ G_{W+t}(\pmb{m}) \mid \pmb{m} \in \Z^{N-1} \}$ partitions $\oplam(W+t)$, i.e., 
 $\oplam(W+t)=\bigsqcup_{\pmb{m}\in \Z^{N-1}} G_{W+t}(\pmb{m})$. 
 Second, the restriction of $\pi_1$ to $G_{W+t}(\pmb{m})$ is injective since $a_{11}$ and $a_{1(N+1)}$ are rationally independent.
 Third, observe that for any $p\in \oplam(W+t)$ we have
  \begin{align}\label{eq: projection of pseudolines is one-dimensional model set list of properties of pseudolines}
  \pi_1(G_{W+t}(\pmb{m}_p)) = \sum_{i=2}^N m_ia_{1i} + \oplam_1\left( W+t-\sum_{i=2}^N m_i\omega_i \right) \in \Omega(\oplam_1(W+t)).
  \end{align}
 Finally, notice that there exists $C>0$ (independent of $t$) such that for each $G_{W+t}(\pmb{m})$ we have $G_{W+t}(\pmb{m}) \ssq B_C(\ell(\pmb{m}))$,
 where $\ell(\pmb{m})$ is the line $\{\lambda \cdot (1/\w_1\cdot v_1-v_{N+1})+\sum_{i=2}^Nm_iv_i\: \lambda \in \R\}\ssq \R^N$.
 Since $A\in \mathrm{GL}(N+1,\R)$, we have $(1/\w_1\cdot v_1-v_{N+1})\notin \spann\left\{ v_2, \ldots, v_N \right\}$ and therefore 
 immediately obtain the following statement.
 \begin{lem}
  \label{lemma: number of pseudolines}
  Suppose $(\R^N,\R,\cL)$ is a CPS with proper window $W \ssq \R$.
  Then there exists $\kappa > 0$ such that for each $t\in \R$ we have
   $$ \sharp \left\{ G_{W+t}(\pmb{m}) \mid \pmb{m} \in \Z^{N-1}, G_{W+t}(\pmb{m}) \cap B_M^N(0) \neq \emptyset \right\} \leq \kappa \cdot \Leb \left( B_M^{N-1}(0) \right),$$
  where $B_M^d(0) \ssq \R^d$ denotes the $d$-dimensional $M$-ball centred at $0$. 
 \end{lem}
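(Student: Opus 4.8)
The plan is to bound the number of pseudolines meeting $B_M^N(0)$ by counting the points of the model set $\oplam(W+t)$ that lie on them, rather than by counting lines. Four facts recorded above are used: the family $\{G_{W+t}(\pmb{m})\mid\pmb{m}\in\Z^{N-1}\}$ partitions $\oplam(W+t)$, so pseudolines with distinct indices are pairwise disjoint; $\pi_1$ restricted to each pseudoline is injective; $\pi_1(G_{W+t}(\pmb{m}))$ is a translate of the one-dimensional model set $\oplam_1(W+t-\sum_{i=2}^Nm_i\w_i)$; and $G_{W+t}(\pmb{m})\ssq B_C(\ell(\pmb{m}))$ with $C$ independent of $t$ and $\pmb{m}$. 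Since $W$ is compact with $(W+t)-(W+t)=W-W$, all the sets $\oplam(W+t)$ are $r$-uniformly discrete with one fixed $r>0$, whence $\sharp\bigl(\oplam(W+t)\cap B_\rho^N(0)\bigr)\le\kappa_0\rho^N$ for every $\rho\ge1$ and every $t$. The lemma then reduces to showing that \emph{each} pseudoline meeting $B_M^N(0)$ already contains at least $cM$ points of $\oplam(W+t)\cap B_{C'M}^N(0)$ for suitable constants $c,C'>0$: dividing yields the assertion with $\kappa=\kappa_0(C')^N/c$ for $M\ge1$, the remaining bounded range of $M$ being absorbed into $\kappa$.

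For this per-pseudoline estimate, fix $\pmb{m}$ and $p_0\in G_{W+t}(\pmb{m})\cap B_M^N(0)$. The tube containment gives $\dist(0,\ell(\pmb{m}))\le M+C$; let $w_0\in\ell(\pmb{m})$ realise this distance and parametrise $\ell(\pmb{m})$ by arclength from $w_0$, with direction $d_0/\|d_0\|$, $d_0=\tfrac1{\w_1}v_1-v_{N+1}$. First one checks $(d_0)_1\neq0$: otherwise $\pi_1$ would be constant along $\ell(\pmb{m})$ and hence bounded on the tube $B_C(\ell(\pmb{m}))$, contradicting that $\pi_1(G_{W+t}(\pmb{m}))$ --- a translate of $\oplam_1$ of a window with non-empty interior, as $W$ is proper --- is relatively dense in $\R$. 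So $\pi_1$ varies along $\ell(\pmb{m})$ at a fixed nonzero rate, and since each $p\in G_{W+t}(\pmb{m})$ lies within $C$ of its foot on $\ell(\pmb{m})$, its arclength parameter $s(p)$ is determined by $\pi_1(p)$ up to a bounded error. Combining the injectivity of $\pi_1$ on $G_{W+t}(\pmb{m})$ with the (uniform) relative density $R_1$ of $\pi_1(G_{W+t}(\pmb{m}))$ in $\R$, the set of these parameters is relatively dense in $\R$ with a uniform gap $R_1'$; and any $p$ with $|s(p)|\le M$ satisfies $\|p\|\le\|w_0\|+|s(p)|+C\le2M+2C\le C'M$ for $M\ge1$ with $C'=2C+2$. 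Hence $G_{W+t}(\pmb{m})$ has at least $\lfloor M/R_1'\rfloor$ points in $B_{C'M}^N(0)$, which provides the required $cM$ for large $M$ and completes the argument of the first paragraph.

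The step needing most care is the uniformity --- in both $t$ and $\pmb{m}$ --- of the relative-density constant $R_1$ of the windows $W+t-\sum_{i=2}^Nm_i\w_i$; since these are all translates of the single proper window $W$, one fixes a ball $\overline{B_\delta(h_0)}\ssq\inte(W)$ and uses the density of $L_1^*$ (equivalently, minimality of the irrational rotation generating it) to obtain a relative-density constant for $\oplam_1$ of a $\delta$-ball independent of its centre. I would also emphasise that counting points of $\oplam(W+t)$, rather than directly counting the lines $\ell(\pmb{m})$ lying within distance $M+C$ of the origin, is essential: rational independence of the first row of $A$ makes $\pmb{m}\mapsto\sum_{i=2}^Nm_iv_i$ injective but not proper, because $v_2,\dots,v_N$ may be $\R$-linearly dependent, so the base points of the pseudolines need not be a discrete subset of $\R^N$ and infinitely many lines $\ell(\pmb{m})$ can come within distance $M+C$ of the origin; it is precisely the one-dimensional model-set structure of $\pi_1(G_{W+t}(\pmb{m}))$ that forces each relevant pseudoline to deposit $\Omega(M)$ distinct points of $\oplam(W+t)$ into a ball of radius $O(M)$, which is what makes the division work.
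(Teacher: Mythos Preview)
Your argument is correct but takes a genuinely different route from the paper. The paper's proof is the single sentence preceding the lemma: since each pseudoline lies in a tube of uniform width around the affine line $\ell(\pmb{m})=\R d_0+\sum_{i\ge2}m_iv_i$ and $d_0\notin\spann\{v_2,\ldots,v_N\}$, the images of the offsets in $\R^N/\R d_0\cong\R^{N-1}$ form a lattice, so the number of such lines meeting $B_{M+C}^N(0)$ is $O(M^{N-1})$ by a standard lattice-point count. You instead count \emph{points} of $\oplam(W+t)$: each pseudoline meeting $B_M^N(0)$ deposits at least $cM$ points into $B_{C'M}^N(0)$ (via uniform relative density of its $\pi_1$-image, which you correctly extract from properness of $W$), the pseudolines are pairwise disjoint, and there are $O(M^N)$ model-set points in that ball; division gives the bound. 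Your final paragraph overstates the case against the paper's method, though: direct line-counting does not need $v_2,\ldots,v_N$ to be $\R$-linearly independent among themselves --- only that their images in $\R^N/\R d_0$ be independent, which is exactly the content of $d_0\notin\spann\{v_2,\ldots,v_N\}$. (There is a genuine wrinkle in the paper's formulation --- the tube constant is uniform only after shifting $\ell(\pmb{m})$ by $(t-\sum m_i\omega_i)v_{N+1}$, whereupon the relevant vectors become $w_i=v_i-\omega_iv_{N+1}$ and the needed independence follows cleanly from $A\in\GL(N{+}1,\R)$ --- but your argument inherits the same tube assumption, so this does not distinguish the two approaches.) In short: the paper exploits the lattice structure of $\cL$ in $d_0^\perp$; you exploit the Delone structure of $\oplam(W+t)$ together with the one-dimensional model-set structure along each pseudoline. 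Both work.
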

 
The next result provides a whole class of higher dimensional CPS with irregular windows
whose associated Delone dynamical systems have zero entropy.

 \begin{thm}
  \label{thm: zero entropy high dim}
  Let $(\R^N,\R,\cL)$ be a CPS with proper window $W \ssq \R$. 
  Furthermore, assume that there exists $i \in \{1, \ldots, N\}$ such that $\htop(\varphi_i) = 0$. 
  Then we have $\htop^\xi(\varphi) = 0$ for all $\xi \in \T$. 
 \end{thm}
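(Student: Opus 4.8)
The plan is to fix $\xi\in\T$ and show that $h_\epsilon^\xi(\varphi)=0$ for every $\epsilon>0$, which gives $\htop^\xi(\varphi)=0$. Since fibre entropy, like fibre cardinality, is constant along $G$-orbits in $\T$, we may assume $\xi=[0,t]_\cL$; and since the pseudoline construction preceding Lemma~\ref{lemma: number of pseudolines} can be set up with respect to any single coordinate, we may relabel so that the hypothesis $\htop(\varphi_i)=0$ holds for $i=1$. Let $(A_n)$ be a van Hove sequence in $G=\R^N$, fix $\epsilon>0$ small, and set $A_n'=A_n+\overline{B_{1/\epsilon}(0)}$, so that $\Theta_G(A_n')=(1+o(1))\Theta_G(A_n)$ by the van Hove property. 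Remark~\ref{remark: inclusion in fibres} shows that two sets in $\beta^{-1}(\xi)$ that agree on $A_n'$ are $(\epsilon,n)$-close, hence
\[
S^{\beta^{-1}(\xi)}(\varphi,\epsilon,n)\ \le\ \#\bigl\{\Gamma\cap A_n' : \Gamma\in\beta^{-1}(\xi)\bigr\},
\]
and the problem reduces to counting these restrictions.

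By \eqref{eq:flowMorphism} every $\Gamma\in\beta^{-1}(\xi)$ lies between $\oplam(\inte(W)+t)$ and $\oplam(W+t)$, and the pseudolines $G_{W+t}(\pmb m)$, $\pmb m\in\Z^{N-1}$, partition $\oplam(W+t)$; writing $I_n'=\{\pmb m : G_{W+t}(\pmb m)\cap A_n'\neq\emptyset\}$, we get $\#\{\Gamma\cap A_n'\}\le\prod_{\pmb m\in I_n'} N_n(\pmb m)$ with $N_n(\pmb m)=\#\{\Gamma\cap G_{W+t}(\pmb m)\cap A_n' : \Gamma\in\beta^{-1}(\xi)\}$. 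I would bound $N_n(\pmb m)$ through the one-dimensional system: $\pi_1$ is injective on $G_{W+t}(\pmb m)$ and, by \eqref{eq: projection of pseudolines is one-dimensional model set list of properties of pseudolines} together with \eqref{eq:flowMorphism}, $\pi_1(\Gamma\cap G_{W+t}(\pmb m))$ is an element of $\Omega(\oplam_1(W))$, while $E_{\pmb m}:=\pi_1(G_{W+t}(\pmb m)\cap A_n')$ is a subset of it that does not depend on $\Gamma$ and which, since $G_{W+t}(\pmb m)$ stays within a fixed distance of a line of fixed direction and the one-dimensional model sets are uniformly discrete and relatively dense, splits into finitely many \emph{runs} of consecutive points (ordered by $\pi_1$), the $j$-th run having $\pi_1$-diameter $\lambda_{\pmb m,j}$. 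Over each run the only freedom is the one-dimensional pattern, so if $g(\ell)$ denotes the logarithm of the maximal number of patterns of $\Omega(\oplam_1(W))$ occurring on an interval of length $\ell$ — a subadditive, nondecreasing function with $g(\ell)/\ell\to0$, since $\htop(\varphi_1)=0$ and the system has FLC — then $\log N_n(\pmb m)\le\sum_j g(\lambda_{\pmb m,j})$ and thus $\log\#\{\Gamma\cap A_n'\}\le\sum_{\pmb m\in I_n'}\sum_j g(\lambda_{\pmb m,j})$.

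The crux is to show $\Theta_G(A_n)^{-1}\sum_{\pmb m,j} g(\lambda_{\pmb m,j})\to0$, and here one balances two contributions. A double-counting estimate: each point of $\oplam(W+t)\cap A_n'$ belongs to exactly one run, and a run of $k$ consecutive points has $\pi_1$-diameter at most $(k-1)R'$, where $R'$ bounds the gaps along pseudolines; hence $\sum_{\pmb m,j}\lambda_{\pmb m,j}\le R'\cdot\#(\oplam(W+t)\cap A_n')=O(\Theta_G(A_n))$ by uniform discreteness of the hull. Given $\eta>0$, pick $\ell_0$ with $g(\ell)\le\eta\ell$ for $\ell\ge\ell_0$; the \emph{long} runs then contribute $\le\eta\sum_{\pmb m,j}\lambda_{\pmb m,j}=O(\eta\,\Theta_G(A_n))$. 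For the \emph{short} runs ($\lambda_{\pmb m,j}<\ell_0$, hence at most $M_1=M_1(\ell_0)$ points, again by uniform discreteness) one notes that the pseudoline-predecessor of the first point of such a run lies outside $A_n'$ at distance at most some fixed $R''$, so that point lies in $\partial^{\overline{B_{R''}(0)}}(A_n')$; as distinct runs consist of distinct points, the number of short runs is at most $\#\bigl(\oplam(W+t)\cap\partial^{\overline{B_{R''}(0)}}(A_n')\bigr)=o(\Theta_G(A_n))$ by the van Hove property, so the short runs contribute at most $g(\ell_0)\cdot o(\Theta_G(A_n))=o(\Theta_G(A_n))$. Altogether $h_\epsilon^\xi(\varphi)\le O(\eta)$; letting $\eta\to0$ gives $h_\epsilon^\xi(\varphi)=0$, and then $\epsilon\to0$ gives $\htop^\xi(\varphi)=0$.

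The main obstacle is exactly this final bookkeeping: turning ``subexponential complexity along each of the relevant pseudolines'' into a subexponential total. Decomposing $G_{W+t}(\pmb m)\cap A_n'$ into \emph{runs} (rather than treating it as a single interval) is what makes the double-counting bound $\sum\lambda_{\pmb m,j}=O(\Theta_G(A_n))$ hold with no regularity assumption on the van Hove sets, and confining the short runs to a van Hove boundary layer is what kills the remaining $O(1)$-per-run terms. The quantitative content of Lemma~\ref{lemma: number of pseudolines} is one useful ingredient (e.g.\ for $A_n=B_n^N(0)$ it gives $\#I_n'=O(n^{N-1})$), but it does not by itself suffice, since a single pseudoline may re-enter $A_n'$ many times. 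The remaining steps — the reduction to pattern counting, the identification of projected pseudolines with elements of $\Omega(\oplam_1(W))$, and the equivalence of $\htop(\varphi_1)=0$ with subexponential growth of the one-dimensional complexity function — are routine given the results already established.
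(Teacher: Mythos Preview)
Your argument is correct and takes a genuinely different route from the paper's. The paper fixes the van Hove sequence to be Euclidean balls $B_M^N(0)$ and exploits their convexity: since each pseudoline stays within bounded distance of a line, its intersection with $B_{M+1/\varepsilon}^N(0)$ is a \emph{single} segment whose $\pi_1$-projection lands in $B_{M+1/\varepsilon}^1(0)$. The paper then picks, for each of the at most $\kappa\,\Leb(B_{M+1/\varepsilon}^{N-1}(0))$ relevant pseudolines, an element of a minimal $(\varepsilon,M)$-spanning set $S_1(\varepsilon,M)$ for the one-dimensional hull, obtaining directly
\[
\sharp S^\xi(\varphi,\varepsilon,M)\ \le\ P_1(\varepsilon,M)^{\,\kappa\,\Leb(B_{M+1/\varepsilon}^{N-1}(0))},
\]
so that $\htop^\xi(\varphi)\le\lim_{\varepsilon\to0}\limsup_M c\cdot M^{-1}\log P_1(\varepsilon,M)=c\,\htop(\varphi_1)=0$. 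Thus in the paper's setup there is exactly one run per pseudoline, Lemma~\ref{lemma: number of pseudolines} \emph{does} suffice, and no boundary-layer bookkeeping is needed. Your runs--plus--double-counting argument trades this simplicity for robustness: it works for an arbitrary van Hove sequence, where pseudolines may indeed re-enter $A_n'$ many times, and replaces the global spanning number by the pattern-complexity function. What you gain is generality; what the paper gains is a two-line endgame.

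Two small points to tighten. First, your sentence ``$E_{\pmb m}:=\pi_1(G_{W+t}(\pmb m)\cap A_n')$ is a subset of it'' has the inclusion backwards: $E_{\pmb m}$ is the projection of the \emph{maximal} pseudoline and \emph{contains} $\pi_1(\Gamma\cap G_{W+t}(\pmb m)\cap A_n')$, not the other way round; the counting still goes through. Second, \eqref{eq: projection of pseudolines is one-dimensional model set list of properties of pseudolines} together with \eqref{eq:flowMorphism} does not by itself give $\pi_1(\Gamma\cap G_{W+t}(\pmb m))\in\Omega(\oplam_1(W))$ for arbitrary $\Gamma\in\beta^{-1}(\xi)$ --- being sandwiched between $\oplam_1(\inte(W)+\cdots)$ and $\oplam_1(W+\cdots)$ is not enough in the irregular case. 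You need the limit description \eqref{eq:equivalenceForFibre}: write $\Gamma=\lim_j\oplam(W+t_j)$, apply \eqref{eq: projection of pseudolines is one-dimensional model set list of properties of pseudolines} to each $G_{W+t_j}(\pmb m)$, and pass to the limit. This is a routine fix and does not affect the rest of your proof.
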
 

 \begin{proof}
  Without loss of generality, we may assume that $\htop(\varphi_1) = 0$.  
  We equip $\R$ as well as $\R^N$ with the Euclidean metric and consider the 
  entropy of $\varphi_1$ and $\varphi$ obtained by averaging over the van Hove sequence given by one-dimensional balls 
  ${(B_M^1(0))}_{M\in \N}$ and $N$-dimensional balls ${(B_M(0))}_{M\in \N}$, respectively.
  Fix some $\xi \in \T^{N+1}$ and assume without loss of generality that there is $t\in\R$ with $\xi= [0,t]_\cL$.
  Let $\eps>0$ be smaller than $r\coloneqq \frac{1}{2} \cdot \min\{\inf_{p\neq q\in \oplam(W+t)} \|p-q\|, \inf_{p\neq q\in \oplam_1(W)} |p-q|\}$.
  Given $M\in \N$, let $S_1(\varepsilon,M)$ be $(\varepsilon,M)$-spanning for $\Omega(\oplam_1(W))$ with minimal cardinality 
  $ P_1(\varepsilon,M)\coloneqq \sharp S_1(\varepsilon,M)$.
  Our goal is to construct a set $S^\xi(\eps,M)$ which is $(\varepsilon,M)$-spanning for $\beta^{-1}(\xi)$ 
 and satisfies
  \begin{align}\label{eq: cardinality spanning set}
\sharp S^\xi(\varphi,\varepsilon,M) \leq P_1(\varepsilon,M)^{\kappa \Leb \left( B^{N-1}_{M+1/\varepsilon}(0) \right) }.   
  \end{align}

  To that end, recall that two Delone sets $\Lambda, \Gamma \in \beta^{-1}(\xi)$ satisfy $\max_{s \in B_M(0)} d(\Lambda-s,\Gamma-s) < \varepsilon$ if 
  for all $s \in B_M(0)$ we have 
  $\Lambda \cap B_{1/\varepsilon}(s) = \Gamma \cap B_{1/\varepsilon}(s)$ (see Remark~\ref{remark: inclusion in fibres}).
  Since $\oplam(W+t)$ can be covered by pseudolines, this is the case if for all such $s$ and 
  each $p \in B_{1/\varepsilon}(s) \cap \oplam(W+t)$ we have
  $G_{W+t}(\pmb{m}_p) \cap \Lambda \cap B_{1/\varepsilon}(s)= G_{W+t}(\pmb{m}_p) \cap \Gamma \cap B_{1/\varepsilon}(s)$.
  Note that this is equivalent to
  \begin{align}\label{eq: close model sets coincide on pseudolines}
   \pi_1(G_{W+t}(\pmb{m}_p) \cap \Lambda \cap B_{1/\varepsilon}(s)) = \pi_1(G_{W+t}(\pmb{m}_p) \cap \Gamma \cap B_{1/\varepsilon}(s)),
   \end{align}
   since $\left.\pi_1\right|_{G_{W+t}(\pmb{m}_p)}$ is injective.
   
   Now, given $\Gamma\in \beta^{-1}(\xi)$ with $\Gamma =\lim_{j\to\infty} \oplam(W+t_j)$ (see \eqref{eq:equivalenceForFibre}),
   observe that
  \begin{align}\label{eq: projection of pseudolines is one-dimensional model set}
  \begin{split}
   \pi_1(G_{W+t}(\pmb{m})\cap \Gamma)& = \pi_1(G_{W+t}(\pmb{m})\cap \lim_{j\to\infty} \oplam(W+t_j))\\
  &=\lim_{j\to\infty} \pi_1(G_{W+t_j}(\pmb{m})\cap  \oplam(W+t_j))
  \end{split}
  \end{align}
  which is an element of $\Omega(\oplam_1(W))$ due to \eqref{eq: projection of pseudolines is one-dimensional model set list of properties of pseudolines}.
  Hence, by definition of $S_1(\eps,M)$, there is $\Delta \in S_1(\varepsilon,M)$ with 
    $\max_{s \in B^1_M(0)} d(\pi_1\left(G_{W+t}(\pmb{m}) \cap \Gamma\right)-s,\Delta-s) < \varepsilon$.
    In particular, we have
    \begin{align}\label{eq: defn eps-configuration}
 \pi_1\left(G_{W+t}(\pmb{m}) \cap \Gamma \cap B_{M+1/\varepsilon}(0)\right) \ssq \Delta+\delta  
  \end{align}
  for some $\delta\in \R$ with 
  $|\delta|<\eps$.
  Since $\eps<r$, we have that for fixed $\pmb{m}$ and $\Delta$ there is at most one such $\delta$ 
  for which \eqref{eq: defn eps-configuration} is satisfied for some $\Gamma \in \oplam(W+t)$.
  If \eqref{eq: defn eps-configuration} holds, we say \emph{$\Gamma$ realises the local configuration of $\Delta$ along $G_{W+t}(\pmb{m})$}.
  We define an equivalence relation $\sim$ on $\beta^{-1}(\xi)$ by putting $\Gamma \sim \Lambda$ if
  $\Gamma $ and $\Lambda$ realise the same local configuration along $G_{W+t}(\pmb{m}_p)$
  ($p\in B_{1/\varepsilon}(s) \cap \oplam(W+t)$).   
  The above shows: $\max_{s \in B_M(0)} d(\Lambda-s,\Gamma-s) < \varepsilon$ if $\Lambda \sim \Gamma$.
   
  Finally, we set $S^\xi(\varphi,\varepsilon,M)$ to be a set which contains one representative for each equivalence class
  of $\sim$.
  Recall that the number of pseudolines that intersect $B_{M+1/\varepsilon}(0)$ is bounded by $\kappa\cdot \Leb ( B^{N-1}_{M+1/\varepsilon}(0))$ (see Lemma~\ref{lemma: number of pseudolines}). 
  Since there are at most $ P_1(\varepsilon,M)$ possible configurations realised along each 
  $G_{W+t}(\pmb{m}_p) \cap B_{M+1/\varepsilon}(0)$,
  we obtain \eqref{eq: cardinality spanning set}.
  Thus,
  \begin{align*}
   \htop^\xi(\varphi) &\leq \lim_{\varepsilon \to 0} \limsup_{M \to \infty} \frac{\kappa \Leb \left( B^{N-1}_{M+1/\varepsilon}(0) \right)}{\Leb \left( B_M(0) \right)} \log P_1(\varepsilon,M)  \\
                          &= \lim_{\varepsilon \to 0} \limsup_{M \to \infty} \frac{2\kappa}{\sqrt{\pi}} \left( \frac{1}{M\varepsilon}+1 \right)^{N-1} \frac{\log P_1(\varepsilon,M)}{\Leb(B^1_M(0))} \\
                          &\leq \frac{2\kappa}{\sqrt{\pi}} \lim_{\varepsilon \to 0} \limsup_{M \to \infty} \frac{\log P_1(\varepsilon,M) }{\Leb(B^1_M(0))}= 0
  \end{align*}
  which finishes the proof.
  \end{proof}

\section{Invariant measures, dynamical spectrum and diffraction} \label{Spectra}
%
In this part, we discuss the spectral properties of the model sets constructed in the previous sections.
Suppose $G$ is an abelian group.
 Recall that given a topological dynamical system $(X,G)$
 which preserves a measure $\mu$, we say $f\in L_2(X,\mu)$
 is an \emph{eigenfunction} of $(X,G)$ (equipped with $\mu$) if there exists $\lambda\in \hat G$ such that
 $g.f=\lambda(g)\cdot f$ ($g\in G$), where
 $g.f(x)=f(gx)$ ($x\in X$). 
 Here, $\hat G$ denotes the dual of $G$.
 We say $(X,G)$ has \emph{pure point spectrum} if there exists an orthonormal basis of $L_2(X,\mu)$ which consists of eigenfunctions.

Let us recall some basic facts from the spectral theory of minimal equicontinuous topological dynamical systems
$(\T,G)$.
Due to Theorem~\ref{thm: representation of general equicont sys}, we may assume without loss of generality that $\T$ is a compact abelian group
and $g\xi =\xi+\w(g)$ for all $\xi \in \T$ and $g\in G$, where $\w\: G\to \T$ denotes a 
group homomorphism with dense image in $\T$.
Note that for all $\lambda\in \hat \T$, $g\in G$, and $\xi\in \T$ we have
$\lambda(\xi+\w(g))=\lambda(\w(g))\cdot \lambda(\xi)$.
Moreover, observe that $\lambda(\w(\cdot))$ is a character on $G$.
Hence, every element of $\hat \T$ is an eigenfunction of $(\T,G)$ (equipped with the unique invariant measure $\Theta_\T$).
Recall that by the Peter Weyl Theorem, the characters of a compact group $\T$ form an orthonormal basis
of $L_2(\T)=L_2(\T,\Theta_\T)$.
This shows the following well-known fact: every minimal equicontinuous system has pure point spectrum
with continuous eigenfunctions.


In the following, we consider a Delone dynamical system $(\Omega(\oplam(W)),G)$
corresponding to a CPS $(G,H,\mc L)$ with proper window $W$.
As before, let $\beta\:\Omega(\oplam(W))\to \T$ be the associated torus parametrisation,
$\varphi$ the translation action on $(\Omega(\oplam(W))$ (see Section~\ref{sec: delone dynamical systems})
and $\w$ the $G$-action on $\T$ (see Section~\ref{subsec:torusParametrisation}).
 \begin{definition}
 A measurable map $\gamma\:\T\to \Omega(\oplam(W))$ is referred to as an \emph{invariant graph} (for $(\Omega(\oplam(W))$)
 if
 \begin{align}\label{eq: invariant graph}
\forall  s,u\in G,\, t\in H\: \quad \beta\circ\gamma([s,t]_{\mc L})=[s,t]_{\mc L} \quad \text{ and }\quad   \varphi(u,\gamma([s,t]_{\mc L}))=\gamma(\w(u,[s,t]_{\mc L})).
 \end{align}
 \end{definition}
 
 Given an invariant graph $\gamma$, we define the associated \emph{graph measure} by setting
\[
 \mu_\gamma(A)=\Theta_\T(\gamma^{-1}(A))
\]
for all measurable $A\ssq \Omega(\oplam(W))$.
Observe that, since $\Theta_\T$ is ergodic, $\mu_\gamma$ is an ergodic measure of $(\Omega(\oplam(W)),G)$.
Define
\[
 U_\gamma\: L_2(\Omega(\oplam(W)),\mu_\gamma)\to L_2(\T),\qquad f\mapsto f\circ \gamma.
\]
Observe that
\begin{align*}
 \langle U_\gamma f, U_\gamma g\rangle_{L_2(\T)}&=\int_\T \! \overline{U_\gamma f} \cdot U_\gamma g\, d\Theta_\T=
 \int_\T \! (\overline{f} \cdot g) \circ \gamma\,d\Theta_\T =\int_{\Omega(\oplam(W))}\! \overline f\cdot g\, d\mu_{\gamma}\\
 &=\langle f, g\rangle_{L_2(\Omega(\oplam(W)),\mu_\gamma)},
\end{align*}
for all $f,g\in L_2(\Omega(\oplam(W)),\mu_\gamma)$.
Furthermore, due to \eqref{eq: invariant graph}, we have $U_\gamma (g\circ \beta)=g$ for all $g\in L_2(\T)$.
Hence, $U_\gamma$ is bijective.
Finally, for each $u\in G$, \eqref{eq: invariant graph} yields
\[
 u. (U_\gamma f)\, (\cdot)=f\circ \gamma (\w(u,\cdot))=f(\varphi(u,\gamma(\cdot))))=(u.f)\circ \gamma\, (\cdot)=
 U_\gamma (u.f)\, (\cdot).
\]
Altogether, we have proven
\begin{prop}
$(\Omega(\oplam(W)),G)$ equipped with a graph measure has pure point spectrum and all eigenfunctions are continuous.
\end{prop}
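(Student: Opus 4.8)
The plan is to transfer the pure point spectrum of the maximal equicontinuous factor $(\T,G)$ to $(\Omega(\oplam(W)),G)$ equipped with a graph measure $\mu_\gamma$, using the operator $U_\gamma$ constructed above. The key structural facts are already in hand: for any invariant graph $\gamma$ the operator $U_\gamma\: L_2(\Omega(\oplam(W)),\mu_\gamma)\to L_2(\T)$ is an isometric bijection satisfying $U_\gamma(g\circ\beta)=g$ for every $g\in L_2(\T)$ (so that $U_\gamma^{-1}g=g\circ\beta$), and it intertwines the two $G$-actions, $U_\gamma(u.f)=u.(U_\gamma f)$ for all $u\in G$. Since $U_\gamma$ is unitary and equivariant, it suffices to exhibit an orthonormal basis of $L_2(\T)$ consisting of continuous eigenfunctions of $(\T,G)$ and to observe that its preimage under $U_\gamma$ inherits all the relevant properties.

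First I would recall, as noted in the discussion preceding the proposition, that $(\T,G)$ equipped with its Haar measure $\Theta_\T$ has pure point spectrum: by the Peter--Weyl Theorem the characters $\lambda\in\hat\T$ form an orthonormal basis of $L_2(\T)$, each is continuous, and each satisfies $u.\lambda=\lambda(\w(u))\cdot\lambda$ for all $u\in G$, i.e.\ is an eigenfunction with eigenvalue $\lambda\circ\w\in\hat G$. Next, for $\lambda\in\hat\T$ I would set $f_\lambda\coloneqq\lambda\circ\beta=U_\gamma^{-1}(\lambda)$. The function $f_\lambda$ is continuous, being a composition of the continuous maps $\beta$ and $\lambda$, and it is an eigenfunction of $(\Omega(\oplam(W)),G)$: using equivariance of $U_\gamma^{-1}$,
\[
u.f_\lambda \;=\; u.\bigl(U_\gamma^{-1}\lambda\bigr) \;=\; U_\gamma^{-1}(u.\lambda) \;=\; U_\gamma^{-1}\bigl(\lambda(\w(u))\,\lambda\bigr) \;=\; \lambda(\w(u))\cdot f_\lambda .
\]
Because $U_\gamma^{-1}$ is a unitary operator and $\{\lambda\}_{\lambda\in\hat\T}$ is an orthonormal basis of $L_2(\T)$, its image $\{f_\lambda\}_{\lambda\in\hat\T}$ is an orthonormal basis of $L_2(\Omega(\oplam(W)),\mu_\gamma)$. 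Thus we obtain an orthonormal basis of continuous eigenfunctions, which is exactly the claim.

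I do not expect a genuine obstacle here: the substantive work (existence and the three properties of $U_\gamma$, and the spectral description of the equicontinuous factor via Peter--Weyl) has already been carried out above or is classical, and what remains is the routine transfer along $U_\gamma$. The only points that deserve a brief check are that $g\circ\beta$ really lies in $L_2(\mu_\gamma)$ with $\|g\circ\beta\|_{L_2(\mu_\gamma)}=\|g\|_{L_2(\Theta_\T)}$ --- which follows from $\beta_\ast\mu_\gamma=\Theta_\T$, itself a consequence of $\beta\circ\gamma=\Id_\T$ together with the definition of $\mu_\gamma$, and is already implicit in the isometry statement for $U_\gamma$ --- and that having pure point spectrum is preserved under a unitary equivariant isomorphism of the $L_2$-spaces, which is immediate since eigenfunctions are mapped to eigenfunctions with the same eigenvalues.
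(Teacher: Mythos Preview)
Your proposal is correct and follows essentially the same approach as the paper: the paper's proof consists precisely of the paragraph preceding the proposition, where it is shown that $U_\gamma$ is an isometric bijection intertwining the $G$-actions, together with the Peter--Weyl argument for $(\T,G)$; your write-up simply spells out the transfer step (pulling back the characters via $U_\gamma^{-1}=\cdot\circ\beta$) in slightly more detail than the paper's ``Altogether, we have proven''.
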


Suppose for almost every $[s,t]_{\mc L}\in \T$, the fibre $\beta^{-1}([s,t]_{\mc L})$ contains
a unique maximal element $\Gamma_+$ or a unique minimal element $\Gamma_-$ with respect to set inclusion.
Given the existence of such elements, we set
\[
 \gamma_{\pm}\:\T\to\Omega(\oplam(W)),\qquad [s,t]_{\mc L}\mapsto \Gamma_\pm([s,t]_{\mc L}).
\]

\begin{prop}
 Suppose almost every fibre contains an element $\Gamma_+$ ($\Gamma_-$)
 as above.
 Then $\gamma_+$ ($\gamma_-$) is an invariant graph.
\end{prop}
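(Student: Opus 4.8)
The plan is to verify the two defining properties \eqref{eq: invariant graph} of an invariant graph for the map $\gamma_+$ (the argument for $\gamma_-$ being entirely symmetric), namely (a) measurability of $\gamma_+$, (b) the factor property $\beta\circ \gamma_+ = \mathrm{id}_\T$ almost everywhere, and (c) the equivariance $\varphi(u,\gamma_+([s,t]_{\mc L})) = \gamma_+(\w(u,[s,t]_{\mc L}))$. Property (b) is immediate from the definition of $\gamma_+$, since $\Gamma_+([s,t]_{\mc L})\in \beta^{-1}([s,t]_{\mc L})$ by construction. The real content lies in (c) and (a).

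For the equivariance (c), the key observation is that the maximal element of a fibre is preserved under the translation action. First I would note that for any $u\in G$ the homeomorphism $\varphi(u,\cdot)$ maps $\beta^{-1}([s,t]_{\mc L})$ bijectively onto $\beta^{-1}(\w(u,[s,t]_{\mc L})) = \beta^{-1}([s+u,t]_{\mc L})$, because $\beta$ intertwines $\varphi$ and $\w$. Moreover $\varphi(u,\cdot)$ is just translation by $-u$ on Delone sets, hence it is monotone with respect to set inclusion: $\Gamma\ssq \Gamma'$ implies $\Gamma-u\ssq \Gamma'-u$. Consequently $\varphi(u,\cdot)$ carries the (unique) maximal element of $\beta^{-1}([s,t]_{\mc L})$ to the (unique) maximal element of $\beta^{-1}([s+u,t]_{\mc L})$, which is precisely the statement $\varphi(u,\gamma_+([s,t]_{\mc L})) = \gamma_+([s+u,t]_{\mc L}) = \gamma_+(\w(u,[s,t]_{\mc L}))$. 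One has to be slightly careful that this holds for \emph{all} points $[s,t]_\cL$ at which $\Gamma_+$ is defined, and to observe that the set of such points is $\w$-invariant (again because $\varphi(u,\cdot)$ maps fibres to fibres bijectively and monotonically, so a fibre has a unique maximal element iff its $\w(u,\cdot)$-image does); this makes $\gamma_+$ a genuine everywhere-defined equivariant section on a full-measure invariant set, which is all that is required.

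For measurability (a), I would argue that $\gamma_+$ agrees almost everywhere with a pointwise limit of measurable maps, or alternatively exhibit $\Gamma_+$ as an explicit limit. Using \eqref{eq:equivalenceForFibre}, for $\xi=[0,t]_\cL$ the fibre is the set of limits $\lim_j \oplam(W+t_j)$ with $t_j\in L^*$, $t_j\to t$; the maximal element can be obtained (as in the proof of Lemma~\ref{lem:characterisationOfLDCWindows}, though here we only assume existence of $\Gamma_+$, not locally disjoint complements) by taking a sequence of fibre elements whose restrictions to larger and larger balls exhaust the ``generic'' points. A cleaner route: the map $F:\T\to\cK(\Omega(\oplam(W)))$, $\xi\mapsto\beta^{-1}(\xi)$, is upper semicontinuous hence Borel (as used in the proof of Theorem~\ref{t.tame_implies_regular_for_group_actions}), and the assignment ``take the maximal element of a compact subset of a space that is partially ordered by set inclusion, when it exists and is unique'' can be shown Borel by a selection/approximation argument; combining these yields Borel measurability of $\gamma_+$.

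The main obstacle I anticipate is the measurability step (a): unlike the locally-disjoint-complements setting, here one has no structural description of $\Gamma_+$ beyond its defining maximality property, so one must produce a measurable selection. I expect this to be handled either by a direct construction of $\Gamma_+$ as an increasing (in the exhaustion sense) limit built from $F$ together with the torus parametrisation, invoking that countable suprema and limits preserve measurability, or by citing a measurable-selection theorem for the upper-semicontinuous set-valued map $F$ restricted to the full-measure set where the maximal element exists. By contrast, steps (b) and (c) are short and essentially formal, relying only on the fact that translations are order-preserving homeomorphisms permuting the fibres of $\beta$.
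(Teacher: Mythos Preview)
Your proposal is correct. The paper dismisses the two conditions in \eqref{eq: invariant graph} in one sentence (``\eqref{eq: invariant graph} is obvious''), so your explicit argument for equivariance via order-preservation of translations is more thorough but in the same spirit. For measurability, the paper also starts from the measurability of $F:\xi\mapsto\beta^{-1}(\xi)$ (upper semicontinuity, as in the proof of Theorem~\ref{t.tame_implies_regular_for_group_actions}), but rather than invoking a measurable-selection theorem it applies Lusin's theorem to $F$: on each compact Lusin set $K_n\ssq\T$ with $\Theta_\T(K_n)>1-1/n$ the map $F$ is continuous in the Hausdorff metric, and the paper observes that then $\gamma_{|K_n}$ is continuous as well, whence $\gamma$ is measurable with respect to the completion of the Borel $\sigma$-algebra. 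Your selection/limit route would also work; the Lusin argument is a bit more self-contained and avoids citing an external selection principle, at the cost of only yielding measurability with respect to the completed (rather than Borel) $\sigma$-algebra.
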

\begin{proof}
 As the proofs for $\gamma_+$ and $\gamma_-$ are similar, we omit the index $\pm$ in the following.
 \eqref{eq: invariant graph} is obvious.
 In order to see the measurability of $\gamma$, recall that 
 $F\:\T\to\mc K(\Omega(\oplam(W))),\, [s,t]_{\mc L}\mapsto \beta^{-1}([s,t]_{\mc L})$, where $\mc K(\Omega(\oplam(W)))$
 denotes the class of compact subsets of $\Omega(\oplam(W))$, is measurable (see the proof of Theorem~\ref{t.tame_implies_regular_for_group_actions}). 
 Lusin's Theorem hence yields the existence of compact sets $K_n\ssq \T$ with $\Theta_\T(K_n)>1-1/n$ on 
 which $F$ is continuous with respect to the Hausdorff topology on $\mc K(\Omega(\oplam(W)))$.
 Observe that hence, $\left.\gamma\right|_{K_n}$ is continuous, too.
 Thus, $\gamma$ is measurable with respect to the completion of the sigma algebra of the Borel sets of 
 $\T$.
\end{proof}

\begin{rem}
 In the situation of the previous statement, the measure $\mu_{\gamma_+}$ is also referred to as
 \emph{Mirsky measure} (see, for example, \cite{Kulaga-PrzymusLemanczykWeiss2015,KellerRichard2018b,kellerRichard2018a}).
\end{rem}

Now, let $W$ and $V$ be as in Section~\ref{MainConstruction}.
By means of Remark~\ref{rem: two-to-one extension with upper and lower graph} and the above statement, we see that
$(\Omega(\oplam(W)),\R)$ allows for two invariant graphs $\gamma_\pm$ (mapping each $\xi\in \T$ to the maximal [minimal] element of $\beta^{-1}(\xi)$).
Moreover, taking into account that 
the torus parametrisation of $(\Omega(\oplam(W)),\R)$
is almost everywhere $2$-to-$1$, 
the proof of Lemma~\ref{lem: locally disjoint complements implies unique ergodicity} also shows that the associated graph measures
$\mu_{\gamma_\pm}$ of $(\Omega(\oplam(W)),\R)$ are the only ergodic measures
of $(\Omega(\oplam(W)),\R)$.
Likewise, due to Lemma~\ref{lem:characterisationOfLDCWindows} and the unique ergodicity of $(\Omega(\oplam(V)),\R)$,
we have that $(\Omega(\oplam(V)),\R)$ allows for a unique invariant 
graph $\gamma$ (mapping each $\xi\in \T$ to the maximal element of $\beta^{-1}(\xi)$).
We have thus proven
\begin{thm}
 $(\Omega(\oplam(W)),\R)$ (equipped with any ergodic measure) as well as
 $(\Omega(\oplam(V)),\R)$ (equipped with the unique invariant measure)
 have pure point dynamical spectrum with all eigenfunctions being continuous.
\end{thm}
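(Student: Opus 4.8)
The plan is to deduce the statement by assembling the two propositions established just above: that a fibrewise maximal (or minimal) selection is an invariant graph, and that the graph measure of any invariant graph makes the system measure-theoretically isomorphic to its maximal equicontinuous factor $(\T,\R,\Theta_\T)$ --- so that the eigenfunctions are precisely the characters of $\T$ pulled back by $\beta$, which already form a continuous orthonormal basis. Hence it suffices to exhibit, for each of the two systems, all the relevant ergodic measures as graph measures of suitable invariant graphs.

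For $V$, I would first record that $V$ has locally disjoint complements by the construction of Section~\ref{sec: filling the gaps}, so Lemma~\ref{lem:characterisationOfLDCWindows} produces a maximal element of $\beta^{-1}(\xi)$ for every critical $\xi$, while non-critical fibres are singletons; the selection $\gamma_+$ is thus defined on all of $\T$ and is an invariant graph. By Lemma~\ref{lem: locally disjoint complements implies unique ergodicity} the system $(\Omega(\oplam(V)),\R)$ is uniquely ergodic, so its invariant measure equals the ergodic graph measure $\mu_{\gamma_+}$, and the graph-measure proposition concludes this case. For $W$, Remark~\ref{rem: two-to-one extension with upper and lower graph} (via the perfect self-similarity of $W$ from Lemma~\ref{lem: self-similarity of W}) together with Lemma~\ref{lem: self similar points yield similar sets} shows that for every critical $\xi=[0,t]_\cL$ all points of $(\partial W+t)\cap L^*$ are mutually similar with respect to $t$, so each element of $\beta^{-1}(\xi)$ contains either all or none of them; consequently $\beta^{-1}(\xi)\ssq\{\oplam(\inte(W)+t),\oplam(W+t)\}$ is a chain with at most two elements. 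In particular every fibre has a maximal and a minimal element, and the invariant-graph proposition yields invariant graphs $\gamma_\pm$ with graph measures $\mu_{\gamma_\pm}$.

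The one step requiring genuine work --- and where I expect the main difficulty --- is to check that $\mu_{\gamma_+}$ and $\mu_{\gamma_-}$ are the only ergodic measures of $(\Omega(\oplam(W)),\R)$, so that the phrase ``any ergodic measure'' in the statement is accounted for. Here I would argue that an arbitrary ergodic $\mu$ projects under $\beta$ to an invariant measure on the minimal equicontinuous, hence uniquely ergodic, factor $(\T,\R)$, so $\beta_*\mu=\Theta_\T$; disintegrating $\mu$ over $\Theta_\T$ gives conditional measures supported on the at most two-point fibres, say $\mu_\xi=p(\xi)\delta_{\gamma_+(\xi)}+(1-p(\xi))\delta_{\gamma_-(\xi)}$ with $p\colon\T\to[0,1]$ measurable; equivariance of $\gamma_\pm$, invariance of $\mu$, and uniqueness of the disintegration then force $p$ to be invariant under the base action and hence $\Theta_\T$-almost everywhere constant, while extremality of $\mu$ forces this constant to lie in $\{0,1\}$, so $\mu\in\{\mu_{\gamma_+},\mu_{\gamma_-}\}$. (Alternatively, as indicated before the statement, this also follows by rerunning the Birkhoff-averaging argument from the proof of Lemma~\ref{lem: locally disjoint complements implies unique ergodicity}, now tracking the at most two possible limits of the ergodic averages along each orbit.) Once this is in place, the theorem follows immediately: in each case every relevant ergodic measure is a graph measure, and the graph-measure proposition delivers pure point spectrum with continuous eigenfunctions; the remaining points --- measurability of the selections and that the graphs are defined $\Theta_\T$-almost everywhere --- are routine and are already contained in the cited results.
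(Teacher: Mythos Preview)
Your proposal is correct and follows essentially the same approach as the paper: exhibit invariant graphs $\gamma_\pm$ for $W$ and $\gamma_+$ for $V$, identify every ergodic measure as a graph measure, and conclude via the proposition that graph measures yield pure point spectrum with continuous eigenfunctions. The only minor difference is in the sub-step showing that $\mu_{\gamma_\pm}$ exhaust the ergodic measures of $(\Omega(\oplam(W)),\R)$: the paper simply invokes the Birkhoff-averaging argument from the proof of Lemma~\ref{lem: locally disjoint complements implies unique ergodicity} (your stated alternative), whereas your primary route via disintegration over $\Theta_\T$ and invariance of the weight function $p$ is an equally valid and arguably cleaner way to reach the same conclusion.
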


By an immediate application of \cite[Theorem~3.2]{LeeMoodySolomyak2002} and
\cite[Theorem~4.1]{LeeMoodySolomyak2002} we hence get

\begin{cor}
 Every $\Gamma\in (\Omega(\oplam(V)),\R)$ and $\mu$-almost every $\Gamma \in (\Omega(\oplam(W)),\R)$ (where $\mu$ is any invariant measure on $(\Omega(\oplam(W)),\R)$)
 has pure point diffraction spectrum.
\end{cor}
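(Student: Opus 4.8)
The plan is to transfer the pure point dynamical spectrum, just established for both systems, into pure point diffraction by invoking the equivalence between these two notions due to Lee, Moody and Solomyak. Recall that to a Delone set $\Gamma$ one associates, whenever the relevant van Hove limit exists, its autocorrelation $\gamma_\Gamma$, whose Fourier transform $\widehat{\gamma_\Gamma}$ is the diffraction measure; $\Gamma$ has \emph{pure point diffraction} precisely when $\widehat{\gamma_\Gamma}$ is a pure point measure. The theorem above provides that $(\Omega(\oplam(V)),\R)$, equipped with its unique invariant measure, and $(\Omega(\oplam(W)),\R)$, equipped with any ergodic measure, have pure point \emph{dynamical} spectrum with continuous eigenfunctions. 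It therefore only remains to convert the spectral statement into the diffraction statement, which is where the two cited results enter.

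For $(\Omega(\oplam(V)),\R)$, which is uniquely ergodic by Lemma~\ref{lem: locally disjoint complements implies unique ergodicity}, I would invoke \cite[Theorem~4.1]{LeeMoodySolomyak2002}: in the uniquely ergodic case the autocorrelation $\gamma_\Gamma$ exists for \emph{every} $\Gamma$ and is independent of the chosen element, and pure point dynamical spectrum is equivalent to pure point diffraction. Combined with the pure point dynamical spectrum furnished by the theorem above, this yields pure point diffraction for every $\Gamma\in\Omega(\oplam(V))$, as claimed.

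For $(\Omega(\oplam(W)),\R)$, I would fix an arbitrary invariant measure $\mu$. Since $\mu_{\gamma_+}$ and $\mu_{\gamma_-}$ are the only two ergodic measures of the system (as established above), $\mu$ is a convex combination $\mu=\lambda\,\mu_{\gamma_+}+(1-\lambda)\,\mu_{\gamma_-}$ with $\lambda\in[0,1]$. Each ergodic component has pure point dynamical spectrum by the theorem above, so \cite[Theorem~3.2]{LeeMoodySolomyak2002} gives that $\mu_{\gamma_\pm}$-almost every $\Gamma$ has pure point diffraction. As $\mu$ is a finite convex combination of these two measures, the set of elements with pure point diffraction has full $\mu$-measure, which is exactly the assertion.

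The only point requiring care — and hence the main, though minor, obstacle — is the $\mu$-almost everywhere existence and $\mu$-measurability of the autocorrelation map $\Gamma\mapsto\gamma_\Gamma$ in the merely ergodic (non-uniquely ergodic) setting, so that the hypotheses of \cite[Theorem~3.2]{LeeMoodySolomyak2002} are genuinely met. This will be supplied by Lindenstrauss' pointwise ergodic theorem \cite[Theorem~1.2]{Lindenstrauss2001} along a tempered van Hove sequence, exactly as in the proof of Lemma~\ref{lem: locally disjoint complements implies unique ergodicity}: it guarantees almost sure convergence of the empirical correlations and thereby the well-definedness of $\Gamma\mapsto\gamma_\Gamma$ on a set of full measure.
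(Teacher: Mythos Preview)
Your proposal is correct and follows essentially the same route as the paper, which derives the corollary as an immediate application of \cite[Theorem~3.2]{LeeMoodySolomyak2002} and \cite[Theorem~4.1]{LeeMoodySolomyak2002} to the preceding pure point dynamical spectrum result. Your additional remarks on the ergodic decomposition of $\mu$ and on the existence of the autocorrelation via Lindenstrauss' ergodic theorem are more detail than the paper gives, but they do not constitute a different argument.
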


\end{document}